\crefname{appsec}{Appendix}{Appendices}
\theoremstyle{plain}
\newtheorem{thm}{Theorem}[section]
\crefname{thm}{Theorem}{Theorems}
\theoremstyle{plain}
\newtheorem{lem}[thm]{Lemma}
\crefname{lem}{Lemma}{Lemmas}
\theoremstyle{plain}
\newtheorem{cor}[thm]{Corollary}
\theoremstyle{plain}
\newtheorem*{claim*}{Claim}
\crefname{claim}{Claim}{Claims}
\theoremstyle{definition}
\newtheorem{defn}[thm]{Definition}
\theoremstyle{plain}
\newtheorem{conjecture}[thm]{Conjecture}
\crefname{conjecture}{Conjecture}{Conjectures}
\theoremstyle{plain}
\crefname{prop}{Proposition}{Propositions}
\theoremstyle{definition}
\theoremstyle{definition}
\newtheorem{rem}[thm]{Remark}
\theoremstyle{plain}
\crefname{appsec}{Appendix}{Appendices}
\date{}
\let\originalleft\left
\let\originalright\right
\renewcommand{\left}{\mathopen{}\mathclose\bgroup\originalleft}
\renewcommand{\right}{\aftergroup\egroup\originalright}
\renewcommand*{\UrlTildeSpecial}{%
  \do\~{%
    \mbox{%
      \fontfamily{ptm}\selectfont
      \textasciitilde
    }%
  }%
}%
\let\Url@force@Tilde\UrlTildeSpecial
\tikzstyle{vertex}=[circle,draw=black,fill=black,inner sep=0,minimum size=0.2cm,text=white,font=\footnotesize]
\tikzset{every loop/.style={min distance=50,in=50,out=130,looseness=7}}
\let\OLDthebibliography\thebibliography
\renewcommand\thebibliography[1]{
  \OLDthebibliography{#1}
  \setlength{\parskip}{0pt}
  \setlength{\itemsep}{3pt plus 0.3ex}
}
\begin{document}
\title{
Dirac-type theorems in random hypergraphs
}
\author{Asaf Ferber\thanks{Department of Mathematics, University of California, Irvine.
Email: \href{mailto:asaff@uci.edu} {\nolinkurl{asaff@uci.edu}}.
Research supported in part by an NSF grant DMS-1954395.}\and Matthew Kwan\thanks{Department of Mathematics, Stanford University, Stanford, CA 94305.
Email: \href{mattkwan@stanford.edu}{\nolinkurl{mattkwan@stanford.edu}}.
Research supported in part by SNSF project 178493 and NSF award DMS-1953990.}}

\maketitle
\global\long\def\E{\mathbb{E}}%
\global\long\def\Var{\operatorname{Var}}%
\global\long\def\floor#1{\left\lfloor #1\right\rfloor }%
\global\long\def\ceil#1{\left\lceil #1\right\rceil }%

\begin{abstract}
For positive integers $d<k$ and $n$ divisible by $k$, let $m_{d}\left(k,n\right)$
be the minimum $d$-degree ensuring the existence of a perfect matching
in a $k$-uniform hypergraph. In the graph case (where $k=2$), a
classical theorem of Dirac says that $m_{1}\left(2,n\right)=\ceil{n/2}$.
However, in general, our understanding of the values of $m_{d}\left(k,n\right)$
is still very limited, and it is an active topic of research to determine
or approximate these values. In this paper we prove a ``transference''
theorem for Dirac-type results relative to random hypergraphs. Specifically,
for any $d< k$, any $\varepsilon>0$ and any ``not too small''
$p$, we prove that a random $k$-uniform hypergraph $G$ with $n$ vertices and edge probability $p$
typically has the property that every spanning subgraph of $G$ with minimum
$d$-degree at least $\left(1+\varepsilon\right)m_{d}\left(k,n\right)p$
has a perfect matching. One interesting aspect of our proof is a ``non-constructive''
application of the absorbing method, which allows us to prove a bound
in terms of $m_{d}\left(k,n\right)$ without actually knowing its
value.
\end{abstract}

\section{Introduction}

Over the last few decades, there has been a great deal of interest in
analogues of combinatorial theorems \emph{relative to a random set}.
To give a simple example, let us consider \emph{Mantel's theorem}~\cite{Man07}, a classical theorem asserting that any subgraph
of the complete $n$-vertex graph $K_{n}$ with more than about half
of the $\binom{n}{2}$ possible edges must contain a triangle. The
random analogue of Mantel's theorem says that if one considers a \emph{random} subgraph $G\subseteq K_{n}$, obtained by including each edge
independently at random with some suitable probability $0<p<1$, then
typically $G$ has the property that each subgraph with more than
about half of the edges of $G$ must contain a triangle. That is to
say, Mantel's theorem is ``robust'' in the sense that an analogous
statement typically holds even in the ``noisy environment'' of a
random graph. The study of combinatorial theorems relative to random sets has been closely related to several of the most exciting recent developments in probabilistic and extremal combinatorics, including the sparse regularity method, hypergraph containers and the absorbing method. See \cite{Con14} for a general survey of this topic.

In the early history of this area, the available methods were somewhat ad-hoc, but recent years have seen the development of some very general tools and techniques that allow one to ``transfer'' a wide variety of combinatorial theorems to the random setting, without actually needing to know the details of their proofs. As an illustration of this, consider the hypergraph\footnote{A \emph{$k$-uniform hypergraph}, or \emph{$k$-graph} for short, is a pair $H=(V,E)$, where $V$
is a finite set of \emph{vertices}, and $E$
is a family of $k$-element subsets of $V$, referred to as the \emph{edges} of
$H$. Note that a $2$-graph is just a graph.} \emph{Tur\'an problem}, which is a vast generalisation of Mantel's problem. For a $k$-graph $H$, let $\operatorname{ex}(n,H)$ be the maximum possible number of edges in an $n$-vertex $k$-graph which contains no copy of $H$, and define the \emph{Tur\'an density} of $H$ as
$$\pi(H)=\lim_{n\to \infty} \frac{\operatorname{ex}(n,H)}{\binom n k}$$
(a simple monotonicity argument shows that this limit exists). In the graph case, where $k=2$, the values of each $\pi(H)$ are given by the celebrated Erd\H os--Stone--Simonovits theorem~\cite{ES66,ES46}, but for higher uniformities very little is known about the values of $\pi(H)$. Despite this, Conlon and Gowers~\cite{CG16} and independently Schacht~\cite{Sch16} were able to prove an optimal theorem in the random setting. Let $G\sim {\operatorname H}^{k}\left(n,p\right)$ be an instance of the random $k$-graph with edge probability $p$; they proved that if $p$ is not too small then typically $G$ has the property that every subgraph with at least $(\pi_k(H)+\varepsilon)\binom{n}{k}p$ edges has a copy of $H$. Moreover, they were able to find the optimal range of $p$ (that is, the essentially best possible definition of ``not too small'') for which this holds.

In other words, whenever we are able to prove a Tur\'an-type theorem for graphs or hypergraphs, we ``automatically'' get a corresponding theorem in the random setting. This was quite a striking development: before this work, similar theorems were known only for a few graphs $H$ (and no higher-uniformity hypergraphs), despite rather a lot of effort. For a more detailed history of this problem we refer the reader to \cite{CG16,Sch16} and the references therein.

The methods and tools developed by Conlon and Gowers, and by Schacht, were later supplemented by some further work by Conlon, Gowers, Samotij and Schacht~\cite{CGSS14}. The ideas developed by these authors are very powerful (and actually apply in much more general settings than just Tur\'an-type problems), but a common shortcoming is that none of them are sensitive to ``local'' information about the individual vertices of a graph or hypergraph, and therefore they are not sufficient for proving relative versions of theorems in which one wishes to understand the presence of \emph{spanning} substructures.

For example, \emph{Dirac's theorem}~\cite{Dir52} famously asserts that every $n$-vertex graph with minimum degree at least $n/2$ has a \emph{Hamiltonian cycle}: a cycle passing through all the vertices of the graph. A random analogue of this theorem was conjectured by Sudakov and Vu~\cite{SV08} and proved by Lee and Sudakov~\cite{LS12} (see also the refinements in \cite{Mon17,NST19}): For any $\varepsilon>0$, if $p$ is somewhat greater than $\log n/n$, then a random graph $\operatorname{H}^2(n,p)$ typically has the property that every spanning subgraph with minimum degree at least $(1+\varepsilon)np/2$ has a Hamiltonian cycle. Since this work, there has been a lot of interest in random versions of Dirac-type theorems for other types of spanning or almost-spanning subgraphs (see for example \cite{ABET20,ABHKP16,BCS11,BKS11,BKT13,FNNP17,FK08,HLS12,Mon20,NS17,SST18}), introducing a large number of ideas and techniques that are quite independent of the aforementioned general tools. In this paper we are interested in Dirac-type problems for random \emph{hypergraphs}. Before discussing this further, we take a moment to make some definitions and introduce the topic of (non-random) Dirac-type problems for hypergraphs.

Recall that Dirac's theorem asserts that every $n$-vertex graph with minimum degree at least $n/2$ has a Hamiltonian cycle. If $n$ is even then we can take every second edge on this cycle to obtain a \emph{perfect matching}: a set of vertex-disjoint edges that covers all the vertices of our graph. So, Dirac's theorem can also be viewed as a theorem about the minimum degree required to guarantee a perfect matching. While there are certain generalisations of (Hamiltonian) cycles to hypergraphs\footnote{One of these generalisations is called a \emph{Berge} cycle. Actually Clemens, Ehrenm\"uller and Person~\cite{CEP19} recently proved a generalisation of Dirac's theorem, and a random version of this theorem, for Hamiltonian Berge cycles.}, the notion of a perfect matching generalises unambiguously, and we prefer to focus on perfect matchings when considering hypergraphs of higher uniformities.

One subtlety is that in the hypergraph setting there are actually multiple possible generalisations of the notion of minimum degree. For a $k$-graph $H=(V,E)$ and a subset $S\subseteq V$ of the vertices of $H$, satisfying $0\leq|S|\leq k-1$, we define the \emph{degree} $\deg_{H}(S)$ of $S$ to be the number of edges of $H$ which include $S$. The \emph{minimum $d$-degree} $\delta_d(H)$ of $H$ is then defined to be the minimum, over all $d$-sets of vertices $S$, of $\deg_{H}(S)$. For integers $n,k,d$ such that $1\leq d\leq k-1$ and $n$ is divisible by $k$,
let $m_{d}(k,n)$ be the smallest integer $m$ such that every
$n$-vertex $k$-graph $H$ with $\delta_{d}(H)\geq m$ has a perfect matching. Dirac's theorem says that $m_1(2,n)\le \ceil{n/2}$, and it is quite easy to see that this is tight.

The problem of determining or approximating the values of $m_{d}(k,n)$ is fundamental in extremal graph theory, and has attracted a lot of attention in the last few decades (see for example the surveys~\cite{RR10, Zha16} and the references therein). The main conjecture in this area is as follows.

\begin{conjecture}
\label{conj:dirac-matching}
For fixed positive integers $d< k$, we have
\[
m_{d}(k,n)=\left(\max\left\{ \frac{1}{2},1-\left(1-\frac{1}{k}\right)^{k-d}\right\} +o(1)\right)\binom{n-d}{k-d},
\]
where $o(1)$ represents some error term that tends to zero as $n$
tends to infinity along some sequence of integers divisible by $k$.
\end{conjecture}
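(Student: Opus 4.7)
The conjecture has two halves: a lower bound (that there exist $n$-vertex $k$-graphs with minimum $d$-degree close to the claimed value and no perfect matching) and a matching upper bound (that the claimed degree suffices to force a perfect matching). The lower bound is straightforward, while the upper bound is the open heart of the problem in full generality. For the lower bound I would exhibit two classical constructions. The \emph{space barrier}: take $B\subseteq V$ with $|B|=\floor{n/k}-1$ and let $H$ be the $k$-graph of all $k$-sets meeting $B$; every matching in $H$ has at most $|B|<n/k$ edges, and a direct count gives $\delta_d(H)=(1-(1-1/k)^{k-d}+o(1))\binom{n-d}{k-d}$. The \emph{divisibility barrier}: partition $V=A\cup B$ with $|A|$ carefully chosen and let $H$ consist of those $k$-sets $e$ with $|e\cap A|$ in a prescribed parity class; a counting-modulo-$2$ argument rules out a perfect matching, and one checks $\delta_d(H)\approx\tfrac{1}{2}\binom{n-d}{k-d}$. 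Whichever construction yields the larger $\delta_d$ for the given $(d,k)$ gives the claimed lower bound, and the two constructions are balanced at exactly the value of $d/k$ for which $1-(1-1/k)^{k-d}=1/2$.

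For the upper bound I would follow the \emph{absorbing method}. First, build a small absorbing subgraph $A\subseteq H$ such that for every small set $T\subseteq V\setminus V(A)$ of the correct size, $H[V(A)\cup T]$ has a perfect matching. Second, find a near-perfect matching in $H-V(A)$ covering all but a small residual set $T$. Third, absorb $T$ using $A$. The second step is typically accessible via a Pippenger-type almost-matching theorem applied to a suitable fractional matching, using only that $\delta_d(H)$ is linearly close to the conjectured threshold. The third step is then automatic. The heart of the proof is the first step: one identifies small ``absorbing gadgets'' rooted at each potential $k$-set $e\subseteq V$, counts them under the minimum $d$-degree hypothesis, and uses a probabilistic selection (Alon--Yuster style) to assemble $A$ out of these gadgets.

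The main obstacle, and the reason this conjecture remains open for general $d<k$, is that the hypothesis only just exceeds the extremal constructions, so any gadget-counting argument must exploit the minimum $d$-degree hypothesis nearly losslessly. A workable absorber must be tailored to whichever of the two barriers (space or divisibility) is binding in the regime under consideration, and must be complemented by some lattice or partition analysis to rule out the divisibility barrier when it is not binding. The $d=k-1$ codegree case was resolved by R\"odl, Ruci\'nski and Szemer\'edi, and subsequent work by Keevash, Mycroft, Han and others has settled various other ranges by combining the absorbing method with lattice-based structural analysis; but a single unified argument reaching the conjectured bound for all $1\leq d<k$ remains out of reach, and the present paper circumvents this by treating $m_d(k,n)$ as a black box rather than trying to compute it.
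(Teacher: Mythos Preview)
Your assessment is correct and appropriate: the paper does not prove \cref{conj:dirac-matching} at all. It is stated purely as an open conjecture, with the paper remarking only that ``there are constructions showing that the expression in \cref{conj:dirac-matching} is a lower bound for $m_d(k,n)$; the hard part is to prove upper bounds,'' and listing the special cases that have been settled. There is therefore no proof in the paper to compare your proposal against.

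Your outline of the lower bound via the space and divisibility barriers matches the constructions the paper alludes to, and your sketch of the absorbing-method strategy for the upper bound is the standard framework used in the cited special cases. Your final sentence is exactly right: the present paper sidesteps the conjecture entirely by treating the Dirac threshold $\mu_d(k)$ as an unknown constant and proving a transference theorem relative to it.
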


There are constructions showing that the expression in \cref{conj:dirac-matching} is a lower bound for $m_{d}(k,n)$; the hard part is to prove upper bounds.

In much the same way that the Tur\'an densities $\pi(H)$ encode the asymptotic behaviour of the extremal numbers $\operatorname{ex}(H,n)$, it makes sense to define \emph{Dirac thresholds} that encode the asymptotic behaviour of the values of $m_{d}(k,n)$. However, compared to the Tur\'an case, convergence to a limit is nontrivial; in \cref{sec:convergence} we prove the following result (which will be helpful to state and prove our main result, but is also of independent interest).

\begin{thm}
\label{thm:convergence}Fix positive integers $d<k$. Then the quantity
$m_{d}\left(k,n\right)/\binom{n-d}{k-d}$ converges to a limit $\mu_{d}\left(k\right)\in\left[0,1\right]$,
as $n$ tends to infinity along the positive integers divisible by
$k$.
\end{thm}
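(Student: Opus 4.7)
Write $\alpha_{n}:=m_{d}(k,n)/\binom{n-d}{k-d}$ for every $n$ divisible by $k$, so that $\alpha_{n}\in[0,1]$. The plan is to prove $\limsup_{n}\alpha_{n}\le\liminf_{n}\alpha_{n}$; combined with the trivial reverse inequality this yields convergence to some $\mu_{d}(k)\in[0,1]$. The strategy is to establish the following transfer statement: for every $\varepsilon>0$ and every sufficiently large $n$ divisible by $k$, every sufficiently large $N$ divisible by $k$ satisfies $\alpha_{N}\le\alpha_{n}+\varepsilon$. Granting this, applying it along a subsequence of $n$'s realising $\liminf_{n}\alpha_{n}$ gives $\limsup_{N}\alpha_{N}\le\liminf_{n}\alpha_{n}+\varepsilon$, and letting $\varepsilon\to 0$ finishes the argument.

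To prove the transfer, the plan is to fix an $N$-vertex $k$-graph $H$ with $\delta_{d}(H)\ge(\alpha_{n}+\varepsilon)\binom{N-d}{k-d}$ and produce a perfect matching. The main idea is to take a uniformly random equipartition $V(H)=V_{1}\sqcup\cdots\sqcup V_{q}$ into parts of size $n$ (pretending for the moment that $n\mid N$). A short hypergeometric sampling computation, using the identity $\binom{N-d}{k-d}\binom{N-k}{n-k}=\binom{N-d}{n-d}\binom{n-d}{k-d}$, shows that for each $d$-set $S\subseteq V_{i}$,
\[
\E\bigl[\deg_{H[V_{i}]}(S)\mid S\subseteq V_{i}\bigr]\;=\;\deg_{H}(S)\cdot\frac{\binom{n-d}{k-d}}{\binom{N-d}{k-d}}\;\ge\;(\alpha_{n}+\varepsilon)\binom{n-d}{k-d}.
\]
A Chernoff-type bound for sums of negatively correlated indicators then gives $\deg_{H[V_{i}]}(S)\ge\alpha_{n}\binom{n-d}{k-d}=m_{d}(k,n)$ with probability at least $1-\exp(-c\varepsilon^{2}n^{k-d})$, so a union bound over the $\binom{n}{d}$ $d$-sets in each of the $q$ parts shows that with positive probability $\delta_{d}(H[V_{i}])\ge m_{d}(k,n)$ for every $i$. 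By the very definition of $m_{d}(k,n)$ each $H[V_{i}]$ then contains a perfect matching, and the union of these is a perfect matching of $H$. The concentration step also dictates how fast $N$ may grow with $n$; taking $n\gtrsim\log N$ will be more than enough.

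The main obstacle is the divisibility condition $n\mid N$, which will typically fail. The plan is to handle this by the absorbing method: before sampling the random partition, reserve a small absorber $A\subseteq V(H)$ of size $o(N)$ with the property that for every sufficiently small $T\subseteq V(H)\setminus A$ with $k\mid|T|$, the subhypergraph $H[A\cup T]$ contains a perfect matching. Such an $A$ can be built by a standard supersaturation-plus-random-selection argument, using only the lower bound on $\delta_{d}(H)$ to guarantee an abundance of ``absorbing gadgets''. We then partition $V(H)\setminus A$ into size-$n$ blocks and a leftover $T$ of size $<n$ (automatically a multiple of $k$), run the random-partition argument on the blocks, and absorb $T$ into $A$ to finish the perfect matching. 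Designing the absorber so that it coexists compatibly with the random partition -- so that neither the concentration on the blocks nor the absorbing property of $A$ is destroyed -- is the step we expect to require the most technical care.
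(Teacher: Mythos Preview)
Your strategy is essentially the paper's: reduce to $\limsup\le\liminf$ via a random partition into blocks whose size realises the $\liminf$, combined with an absorbing set to handle leftover vertices. There is, however, a real gap in the union-bound step.

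You assert that a union bound over all $q\binom{n}{d}$ degree conditions (with $q\approx N/n$ blocks) gives $\delta_d(H[V_i])\ge m_d(k,n)$ for \emph{every} $i$ simultaneously. But the concentration exponent is only linear in $n$: the indicators $\mathbf{1}[e\subseteq V_i]$ over edges $e$ of the link of $S$ are \emph{not} negatively correlated when $k-d\ge 2$ (overlapping $e,e'$ are positively correlated), so one must use a bounded-differences inequality, and a single vertex swap in $V_i$ changes a $d$-degree by at most $\binom{n-d-1}{k-d-1}$, yielding failure probability $\exp(-c\varepsilon^{2}n)$ rather than $\exp(-c\varepsilon^{2}n^{k-d})$. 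Either way the union bound succeeds only for $N\lesssim e^{c\varepsilon^{2}n}$, so for fixed $n$ it fails for all large $N$, contradicting your transfer statement. Your remark ``taking $n\gtrsim\log N$'' does not rescue this: if $n$ must grow with $N$, you can no longer choose $n$ from the (possibly arbitrarily sparse) subsequence realising the $\liminf$.

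The paper's fix is to replace the union bound by Markov's inequality. Each block fails its degree condition with probability at most $\lambda:=\binom{Q}{d}e^{-c\eta^{2}Q}$ (where $Q$ is the fixed block size), so with positive probability at most a $\lambda$-fraction of blocks fail; choosing $Q$ large makes $\lambda$ as small as desired. The vertices in failed blocks, together with the $<Q$ remainder from the partition, total $o(N)$ and are all fed to the absorber. Thus the absorbing set must swallow both the partition remainder \emph{and} the bad blocks, not just the remainder as in your outline.

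One further point: the absorbing lemma the paper invokes (the strong absorbing lemma of H\'an, Person and Schacht) requires $\delta_d(H)\ge(1/2+\gamma)\binom{N-d}{k-d}$, not merely some positive lower bound; the paper justifies this by invoking the standard lower-bound constructions showing $\liminf_n\alpha_n\ge 1/2$. Your appeal to a generic ``supersaturation-plus-random-selection'' absorber should make this threshold explicit.
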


Note that \cref{conj:dirac-matching} can then be viewed as a conjecture for the values of the Dirac thresholds $\mu_{d}\left(k\right)$. This conjecture seems to be very difficult, but it has been proved in some special cases: namely, when $5d \ge 2k-2$, and when $(d,k)\in \{(1,4),(1,5)\}$ (see \cite{AFHRRS12,HPS09, Kha16,LM15,Pik08,RRS09,FK18}). A number of different upper and lower bounds have also been proved in various cases.

So, the situation is quite similar to the hypergraph Tur\'an problem: the optimal theorems in the non-random setting are not known, but there is still some hope of proving a ``transference'' theorem, giving bounds in the random setting in terms of the (unknown) Dirac thresholds $\mu_{d}\left(k\right)$.

Of course, in order to prove a random analogue of any extremal theorem, in addition to having a handle on the extremal theorem one also needs to have a good understanding of random graphs and hypergraphs. This presents a rather significant obstacle when investigating perfect matchings, because the study of perfect matchings in random hypergraphs is notoriously difficult. Famously, \emph{Shamir's problem} asks for which $p$ a random $k$-graph ${\operatorname H}^{k}(n,p)$ has a perfect matching, and this was resolved only a few years ago in a tour-de-force by Johansson, Kahn and Vu~\cite{JKV08} (see also the new simpler proof in \cite{FKNP19}, and the refinement in \cite{Kah19}). Roughly speaking, they proved that if $p$ is large enough that ${\operatorname H}^{k}(n,p)$ typically has no isolated vertices (the threshold value of $p$ is about $n^{1-k}\log n$), then ${\operatorname H}^{k}(n,p)$ typically has a perfect matching. All known proofs of this theorem are quite ``non-constructive'', involving some ingenious way to show that a perfect matching is likely to exist without being able to say much about its properties or how to find it.

In any case, it is natural to make the following conjecture, ``transferring'' Dirac-type theorems to random hypergraphs.

\begin{conjecture} \label{conj:resilience}
Fix $\gamma>0$ and positive integers $d<k$, and consider any $0<p<1$ (which may be a function of $n$). Suppose that $n$ is divisible by $k$. Then a.a.s.\footnote{By ``asymptotically almost surely'', or ``a.a.s.'', we mean that
the probability of an event is $1-o\left(1\right)$. Here and for
the rest of the paper, asymptotics are as $n\to\infty$, unless stated otherwise.}\ $G\sim{\operatorname H}^{k}\left(n,p\right)$ has the property that every spanning subgraph $G'\subseteq G$ with $\delta_d(G')\geq (\mu_d(k)+\gamma)\binom{n-d}{k-d}p$
has a perfect matching.
\end{conjecture}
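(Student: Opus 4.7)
The plan is to use the absorbing method of R\"odl, Ruci\'nski, and Szemer\'edi, suitably adapted to the sparse random setting. The strategy splits into three steps: (i) reserve a small random vertex subset $R \subseteq V(G)$ and, inside it, build an \emph{absorbing matching} $M_A$ whose property is that $G'[V(M_A) \cup X]$ contains a perfect matching for every small $X \subseteq V \setminus V(M_A)$ with $k \mid |X|$; (ii) find a near-perfect matching on $V \setminus V(M_A)$; (iii) absorb the leftover vertices using $M_A$. As a preliminary, I would record the pseudo-random properties of $G \sim {\operatorname H}^k(n,p)$ that hold a.a.s.\ when $p$ is not too small (every $d$-set has degree $(1+o(1))\binom{n-d}{k-d}p$ in $G$, the co-degrees of pairs of $d$-sets concentrate around their expectation, and the same regularity is inherited by $G[U]$ for random not-too-small $U$); these transfer automatically to $G'$ with its degree condition rescaled by $\mu_d(k)+\gamma$.

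The heart of the proof is the construction of $M_A$, and this is where the ``non-constructive'' twist enters. Rather than exhibiting explicit absorber gadgets (the usual ingredient in absorbing arguments for hypergraph matchings), the idea is to treat $m_d(k,\cdot)$ as a symbolic parameter and to invoke the extremal theorem, together with \cref{thm:convergence}, as a black box on an auxiliary dense hypergraph derived from $G'$. The principal obstacle is the density gap between the condition on $G'$ (which carries a factor of $p$) and the dense threshold $m_d(k,|R|) \approx \mu_d(k)\binom{|R|-d}{k-d}$ needed to apply the extremal theorem directly to $G'[R\cup X]$. I would attempt to bridge this gap via a sparse-to-dense reduction exploiting the pseudo-randomness of $G$, possibly combined with the Johansson--Kahn--Vu theorem on perfect matchings in random hypergraphs to handle the matching-existence aspect once an appropriate dense reduction has been set up.

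Step (ii) (the near-perfect matching on $V \setminus V(M_A)$) should follow from a semi-random / R\"odl-nibble argument: the min $d$-degree condition on $G'$ combined with the co-degree control inherited from $G$ allows an iterative extraction of a matching leaving only $o(n)$ vertices uncovered. These leftover vertices then constitute a valid deficit $X$ for $M_A$, so step (iii) closes out the perfect matching. The dominant technical difficulty lies in the absorbing lemma, and specifically in designing an argument that uses $m_d(k,\cdot)$ purely as a black-box parameter; this non-constructive perspective is what enables the final bound to be stated in terms of $\mu_d(k)$ without any quantitative knowledge of its value.
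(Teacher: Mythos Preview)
The statement you are attempting to prove is \cref{conj:resilience}, which is a \emph{conjecture}: the paper does not prove it, and explicitly says it believes the full statement will be ``extremely difficult'' (in particular for $p\approx n^{1-k}\log n$). What the paper actually proves is \cref{thm:resilience}, which carries the extra hypothesis $p\ge\max\{n^{-k/2+\gamma},Cn^{2-k}\}$. So at the level of the target statement, your proposal is trying to do strictly more than the paper, and the gap is not a matter of filling in routine details.

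Your three-step outline (absorbing set, near-perfect matching, absorption) matches the paper's architecture for \cref{thm:resilience}, but the decisive step~(i) is where your plan is genuinely incomplete. You write that you would ``bridge the density gap via a sparse-to-dense reduction \dots possibly combined with the Johansson--Kahn--Vu theorem''; this is precisely the part the paper cannot do in general, and where the lower bound on $p$ enters. Concretely, the paper builds its absorbing structure from constant-size absorbers found via the sparse embedding lemma (\cref{thm:KLR}); any such absorber $H$ satisfies $m_k(H)\ge 2/k$, so the embedding lemma only applies when $p$ is at least about $n^{-k/2}$. The paper states this barrier explicitly in \cref{sec:concluding}: ``An absorber has at least $k/2$ times more edges than unrooted vertices, so absorbers of constant size simply will not exist for smaller $p$.'' Your proposal does not indicate how to circumvent this, and invoking JKV as a black box does not help: JKV gives a perfect matching in a random hypergraph, not in an adversarially chosen subgraph $G'\subseteq G$ satisfying a degree condition, and it gives no mechanism for rooting the matching at prescribed vertices, which is what an absorber requires.

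A secondary point: even restricted to the range of $p$ where the paper's proof works, your sketch diverges from the paper in ways that would need substantial new ideas. For step~(ii) you propose a nibble argument using codegree concentration; the paper instead uses the sparse weak regularity lemma to pass to a dense cluster $k$-graph and applies the definition of $\mu_d(k)$ there (\cref{lem:almost-perfect}). For step~(i), beyond the density issue above, the paper needs several further ingredients you do not mention: a ``contraction'' trick (\cref{def:contract-G}) to convert the rooted-absorber problem into an unrooted embedding problem amenable to \cref{thm:KLR}; a construction of locally sparse absorbers via a high-girth pattern (\cref{lem:find-q-absorber}); and Montgomery's resilient-template method (\cref{lem:resilient-template}) to assemble the absorbers into an absorbing structure, since the naive random-selection argument fails in the sparse regime. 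None of these are visible in your outline.
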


Note that in the above conjecture we do not make any assumption on $p$, though in some sense we are implicitly assuming $p=\Omega(n^{d-k}\log n)$, because otherwise one can show that a random $k$-graph $G\sim{\operatorname H}^{k}\left(n,p\right)$ will a.a.s.\ have $\delta_d(G)=0$ (meaning that there is no subgraph $G'$ satisfying the condition in the conjecture). Due to the aforementioned difficulty of studying perfect matchings in random hypergraphs, we believe that \cref{conj:resilience} will be extremely difficult to prove for small $p$ (especially for $p\approx n^{1-k}\log n$), and therefore we believe that the hardest (and most interesting) case is where $d=1$. On the other extreme, if $d=k-1$ then it suffices to consider the regime where $p=\Omega(n^{-1}\log n)$, which is substantially easier due to certain techniques which allow one to reduce the problem of finding hypergraph perfect matchings to the problem of finding perfect matchings in certain bipartite graphs\footnote{While hypergraph matchings are in general not well understood, there are a number of extremely powerful tools available for studying matchings in bipartite graphs (such as Hall's theorem).}. Using such a reduction, the $d=k-1$ case of  \cref{conj:resilience} was proved by Ferber and Hirschfeld~\cite{FH19}.

Our main result in this paper is the following substantial progress towards \cref{conj:resilience}, proving it for all $d<k$ under certain restrictions on $p$ (even though the values of $\mu_d(k)$ are in general unknown).
 
\begin{thm}
\label{thm:resilience}Fix $\gamma>0$ and positive integers $d< k$. Then there
is some $C>0$ such that the following holds. Suppose that $p\ge \max\{n^{-k/2+\gamma},C n^{-k+2}\}$,
and that $n$ is divisible by $k$. Then a.a.s.\ $G\sim{\operatorname H}^{k}\left(n,p\right)$ has the property that every spanning subgraph $G'\subseteq G$ with $\delta_d(G')\geq (\mu_d(k)+\gamma)\binom{n-d}{k-d}p$
has a perfect matching.
\end{thm}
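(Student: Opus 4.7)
The plan is to apply the absorbing method. We seek in $G'$ a matching $M_A$ covering a vertex set $A \subseteq V(G)$ of size $o(n)$, with the \emph{absorbing property} that for every $U \subseteq V(G) \setminus A$ with $|U| \le \eta |A|$ and $k \mid |U|$, the induced subgraph $G'[A \cup U]$ contains a perfect matching. Given such $M_A$, it suffices to find a matching in $G'[V(G) \setminus A]$ covering all but at most $\eta|A|$ vertices, which the absorbing property then completes to a perfect matching of $G'$. The distinctive feature of our approach is that $M_A$ is built \emph{non-constructively}, invoking \cref{thm:convergence} (and the theorem statement applied at smaller scales) instead of any explicit knowledge of the extremal configurations for $m_d(k, \cdot)$.

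A preliminary step is to record pseudorandomness of $G \sim \operatorname{H}^k(n, p)$: by Chernoff and a union bound, a.a.s.\ for every $d$-set $S$ and every subset $W \subseteq V(G) \setminus S$ of size $\ge \eta n$, the codegree $|\{e \in E(G) : S \subseteq e \subseteq S \cup W\}|$ concentrates around $\binom{|W|}{k-d}p$; similarly, counts of constant-sized rooted sub-hypergraphs of $G$ concentrate around their expectations. The hypotheses $p \ge Cn^{-k+2}$ and $p \ge n^{-k/2+\gamma}$ ensure these expectations are large enough for the relevant concentration inequalities.

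For the core step, call a vertex subset $F \subseteq V(G)$ disjoint from a given $k$-set $X$ an \emph{absorber} for $X$ in $G'$ if both $G'[F]$ and $G'[F \cup X]$ contain perfect matchings. A standard random-selection and concentration argument reduces the construction of $M_A$ to showing that many absorbers exist for each $X$. The usual approach produces $F$ explicitly by analysing the extremal configurations for $m_d(k,\cdot)$, which is not available to us; instead we take $F$ to be a \emph{random} vertex subset of a carefully-chosen intermediate size $m$. Pseudorandomness of $G$ combined with concentration implies that a typical such $F$ is such that $G'[F]$ and $G'[F \cup X]$ each inherit, with slack $\gamma/2$, a Dirac-type minimum-$d$-degree condition relative to $G[F]$ and $G[F \cup X]$. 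Because $G[F]$ and $G[F \cup X]$ are themselves marginally distributed as random $k$-graphs of the appropriate smaller size, we may then appeal to \cref{thm:convergence}, or apply the theorem being proved at the smaller scale $m$ in a suitably bootstrapped fashion, to deduce that $G'[F]$ and $G'[F \cup X]$ both contain perfect matchings and hence that $F$ absorbs $X$.

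The near-perfect matching on $G'[V(G) \setminus A]$ is then extracted by iterating a similar argument, peeling off matchings on shrinking residual sub-hypergraphs until only $o(n)$ vertices remain to be absorbed. The main technical obstacle is the delicate balancing around the absorber scale $m$: $m$ must be large enough that the Dirac-type condition is inherited to $G'[F]$ and $G'[F \cup X]$ with enough slack, and that the relevant smaller-scale incarnations of \cref{thm:convergence} and \cref{thm:resilience} remain applicable with the same probability $p$; but also small enough that a random $F$ is uniformly enough distributed for many absorbers to exist per $X$. The lower bound $p \ge n^{-k/2+\gamma}$ in our hypothesis is tuned precisely so that this balancing is possible.
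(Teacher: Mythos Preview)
Your outline has the right high-level shape (absorb, then almost-match), but the core absorber step does not go through. You propose to take $F$ random of some intermediate size $m$ and then invoke either \cref{thm:convergence} or ``the theorem being proved at the smaller scale $m$'' to get perfect matchings in $G'[F]$ and $G'[F\cup X]$. Neither option is available. \cref{thm:convergence} is a statement about dense $k$-graphs, requiring $\delta_d \ge (\mu_d(k)+\gamma')\binom{m-d}{k-d}$ with no factor of $p$, which a subgraph of $\operatorname{H}^k(m,p)$ with $p=o(1)$ cannot meet. And applying \cref{thm:resilience} itself at scale $m$ needs $p \ge m^{-k/2+\gamma}$, which for $m<n$ is a \emph{stronger} constraint than the one assumed at scale $n$ (at the boundary $p=n^{-k/2+\gamma}$ it forces $m\ge n$), so the bootstrap runs in the wrong direction.

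Even granting some source of absorbers, the ``standard random-selection and concentration argument'' you invoke to build $M_A$ also fails in this sparse regime: if $m=O(1)$ then $G[F]$ is a.a.s.\ edgeless, while if $m\to\infty$ the family of candidate $F$'s has superpolynomial size and the union bound over all $k$-sets $X$ does not survive; more fundamentally, in a sparse random host there are simply not enough bounded-size absorbers per $k$-set for the naive averaging to work. The paper confronts exactly this obstacle and abandons random selection entirely: it passes to the dense cluster $k$-graph via sparse regularity, pulls back \emph{rooted} constant-size absorbers through the sparse embedding lemma (the K\L R machinery) combined with a contraction trick to handle the roots, and then assembles the absorbers greedily along a Montgomery-style resilient template rather than via a random family. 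None of these ingredients appear in your sketch, and the assertion that the lower bound on $p$ is ``tuned precisely so that this balancing is possible'' is not supported by any calculation --- as shown above, no admissible $m$ exists.
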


Recalling the implicit assumption $p=\Omega(n^{d-k}\log n)$, \cref{thm:resilience} actually resolves the $d>k/2$ case of \cref{conj:resilience}, and comes very close to resolving the case $d=k/2$ (if $d$ is even). Also, note that except in the case where $k=3$ and $d=1$, the assumption $p\ge C n^{-k+2} \log n$ is superfluous (being satisfied automatically when $p=\Omega(n^{d-k}\log n)$ and $p\ge n^{-k/2+\gamma}$). Actually, this particular assumption can be weakened quite substantially, but in the interest of presenting a clear proof, we discuss how to do this only informally, in \cref{sec:concluding}.

There are a number of different ideas and ingredients that go into the proof of \cref{thm:resilience}. Perhaps the most crucial one is a \emph{non-constructive} way to apply the so-called \emph{absorbing method}. To say just a few words about the absorbing method: in various different contexts, it is much easier to find \emph{almost}-spanning substructures than genuine spanning substructures. For example, a perfect matching is a collection of disjoint edges that cover all the vertices of a hypergraph, but it is generally much easier to find a collection of disjoint edges that cover \emph{almost} all of the vertices of a hypergraph. The insight of the absorbing method is that one can sometimes find small ``flexible'' substructures called \emph{absorbers}, arranged in a way that allows one to make local modifications to transform an almost-spanning structure into a spanning one. This method was pioneered by Erd\H os, Gy\'arf\'as and Pyber~\cite{EGP91}, and was later systematised by R\"odl, Ruci\'nski and Szemer\'edi~\cite{RRS06,RRS09}, in connection with their study of Dirac-type theorems in hypergraphs.

In previous work, the typical approach was to build absorbers in a ``bare-hands'' fashion, considering some set of vertices which we would like to be able to ``absorb'', and reasoning about the possible incidences between edges close to these vertices in order to prove that an appropriate absorber is present. For this to be possible, one must define the notion of an absorber in a very careful way. In contrast, further developing some ideas that we introduced in \cite{FK19}, we are able to find absorbers using a ``contraction'' argument, together with one of the general tools developed by Conlon, Gowers, Samotij and Schacht~\cite{CGSS14}. This gives us an enormous amount of freedom, and in particular we can define absorbers in terms of the Dirac thereshold $\mu_{d}\left(k\right)$ (without knowing its value!). This freedom is also crucial in allowing us to choose absorbers which exist in ${\operatorname H}^{k}\left(n,p\right)$ for small $p$ (that is, for $p$ close to $n^{-k/2}$, which seems to be the limit of our approach).

The structure of the rest of the paper is as follows. First, in \cref{sec:outline} we give an introduction to the absorbing method, and outline the proof of \cref{thm:resilience}. Afterwards, we present a short proof of \cref{thm:convergence} in \cref{sec:convergence}, as a warm-up to the absorbing method before we present the more sophisticated ideas in the proof of \cref{thm:resilience}.

In \cref{sec:regularity} we discuss the so-called sparse regularity method, and in \cref{sec:concentration} we record some basic facts about concentration of the edge distribution in random hypergraphs. Everything in these sections will be quite familiar to experts. In \cref{sec:almost-perfect} we explain how to find almost-perfect matchings in the setting of \cref{thm:resilience}, in \cref{sec:sparse-absorption} we state a sparse absorbing lemma and explain how to use it to prove \cref{thm:resilience}, and in \cref{sec:absorbers} we present the proof of this sparse absorbing lemma.

Finally, in \cref{sec:concluding} we have some concluding remarks, including a discussion of how to weaken the assumption $p\ge C n^{-k+2}$ in the case $(d,k)=(1,3)$.

\begin{rem}[added in proof]
The general approach of defining absorbers in terms of a Dirac threshold has also appeared in earlier work by Glock, K\"{u}hn, Lo, Montgomery and Osthus~\cite{GKLMO19}. We thank Stefan Glock for bringing this to our attention.
\end{rem}

\section{Outline of the proof of the main theorem}\label{sec:outline}

Suppose that $G\sim{\operatorname H}^{k}\left(n,p\right)$ is a typical outcome of ${\operatorname H}^{k}\left(n,p\right)$, and $G'\subseteq G$
is a spanning subgraph of $G$ with minimum $d$-degree at least $\left(\mu_{d}\left(k\right)+\gamma\right)p\binom{n-d}{k-d}$.
Our goal is to show that $G'$ contains a perfect matching. Since the proof is quite involved, we break down the steps of the proof into subsections.

\subsection{Almost-perfect matchings}

The first observation is that our task is much simpler if we relax
our goal to finding an \emph{almost}-perfect matching (that is, a matching
that covers all but $o\left(n\right)$ vertices). This is due to the
existence of a powerful tool called the \emph{sparse regularity lemma}.
Roughly speaking, the sparse regularity lemma allows us to model the
large-scale structure of the sparse $k$-graph $G'$ using a small,
dense $k$-graph $\mathcal{R}$ called a \emph{cluster $k$-graph}.
Each edge of $\mathcal{R}$ corresponds to a $k$-partite subgraph of
$G'$ where the edges are distributed in a ``homogeneous'' or ``quasirandom''
way\footnote{Hypergraph regularity lemmas of the type we use here are sometimes
known as \emph{weak} regularity lemmas, to distinguish them from a
much stronger and more complicated hypergraph regularity lemma which
does not permit a description in terms of cluster $k$-graphs.}.

It is not hard to show that the degree condition on $G'$ translates to a similar degree condition
on $\mathcal{R}$, though small errors are introduced in the process:
we can show that almost all of the $d$-sets of vertices in $\mathcal{R}$
have degree at least say $\left(\mu_{d}\left(k\right)+\gamma/2\right)\binom{t-d}{k-d}$,
where $t$ is the number of vertices of $\mathcal{R}$. We then use
the definition of $\mu_{d}\left(k\right)$ (without knowing its value!)
to show that $\mathcal{R}$ has an almost-perfect matching. This is not immediate,
because $\mathcal{R}$ may have a few $d$-sets of vertices with small
degree, but it is possible to use a random sampling argument to overcome
this difficulty. In any case, an almost-perfect matching in $\mathcal{R}$
tells us how to partition most of the vertices of $G'$ into subsets
such that the subgraphs induced by these subsets each satisfy a certain
quasirandomness condition. We can then take advantage of this quasirandomness
to find an almost-perfect matching in each of the subgraphs. Combining
these matchings gives an almost-perfect matching in $G'$.

The details of this argument are in \cref{sec:almost-perfect}.

\subsection{The absorbing method\label{subsec:absorbing-method}}

It may not be obvious that being able to find almost-perfect matchings
is actually useful, if our goal is to find a perfect matching. It
is certainly not true that we can start from any almost-perfect matching
and add a few edges to obtain a perfect matching. However, it turns
out that something quite similar is often possible in problems of
this type. Namely, in some hypergraph matching problems it is possible
to find a small subset of vertices $X$ which is very ``flexible''
in the sense that it can contribute to matchings in many different
ways. We can then find an almost-perfect matching covering almost
all the vertices outside $X$, and take advantage of the special properties
of $X$ to complete this into a perfect matching. This idea is now
called the \emph{absorbing method}. It was introduced as a general
method by R\"odl, Ruci\'nski and Szemer\'edi~\cite{RRS06,RRS09} (though similar
ideas had appeared earlier, for example by Erd\H os, Gy\'arf\'as and Pyber~\cite{EGP91} and by Krivelevich~\cite{Kri97}). The absorbing
method has been an indispensable tool for almost all work on hypergraph
matching problems in the last decade.

To give a specific example, the \emph{strong absorbing lemma} of H\'an,
Person and Schacht~\cite{HPS09} (appearing here as \cref{lem:strong-absorbing-lemma}) shows that in a very dense $k$-graph $G$
we can find a small ``absorbing'' set of vertices $X$, with the
special property that for any set $W$ of $o\left(n\right)$ vertices
outside $X$, the induced subgraph $G\left[X\cup W\right]$ has a
perfect matching. So, if we can find an almost-perfect matching $M_{1}$
in $G-X$, we can take $W$ as the set of unmatched vertices and use
the special property of $X$ to find a perfect matching $M_{2}$ in
$G\left[X\cup W\right]$, giving us a perfect matching $M_{1}\cup M_{2}$
in $G$.

It is much more difficult to prove absorbing lemmas in the sparse
setting of \cref{thm:resilience}. To explain why, we need to say a bit more about how
absorbing lemmas are proved in the dense setting. Almost always, the
idea is to build an absorbing set $X$ using small subgraphs called
\emph{absorbers}\footnote{The language in this field has still not been fully standardised.
For example, in \cite{HPS09} the authors use the term ``absorbing $m$-set''
instead of ``absorber''.}. In the context of matching problems in $k$-graphs, an absorber
in a $k$-graph $G$ rooted at a $k$-tuple of vertices $x_{1},\dots,x_{k}$
is a subgraph $H$ whose edges can be partitioned into two matchings,
one of which covers every vertex in $V\left(H\right)$ and the other
of which covers every vertex except $x_{1},\dots,x_{k}$. A single
edge $\left\{ x_{1},\dots,x_{k}\right\} $ is a trivial absorber,
and in the case $k=2$ (that is, the case of graphs), an odd-length
path between $x_{1}$ and $x_{2}$ is an absorber. See \cref{fig:3-absorber} for a nontrivial
example of a 3-uniform absorber.

\begin{figure}[h]
\begin{center}
\includegraphics{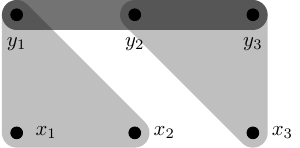}
\end{center}

\caption{\label{fig:3-absorber}An illustration of a 3-uniform absorber rooted on vertices $x_{1},x_{2},x_{3}$.
The dark edge covers all non-root vertices and the two light edges
form a matching covering all the vertices of the absorber.}

\end{figure}
The details in the proofs of different absorbing lemmas vary somewhat,
but a common first step is to show that there are many absorbers rooted
at every $k$-tuple of vertices, using fairly ``bare-hands'' arguments
that take advantage of degree assumptions. For example, suppose an
$n$-vertex 3-graph $G$ has $\delta_{2}\left(G\right)\ge\left(1/2+\gamma\right)n$,
consider any vertices $x_{1},x_{2},x_{3}$, and suppose we are trying
to find a copy of the absorber pictured in \cref{fig:3-absorber}. There are at least $\left(1/2+\gamma\right)n$
choices for $y_{1}$ such that $\left\{ x_{1},x_{2},y_{1}\right\} \in E\left(G\right)$.
For any such $y_{1}$, and any of the $n-4$ remaining choices of
$y_{2}$, there are at least $\left(1/2+\gamma\right)n$ choices for
$y_{3}$ such that $\left\{ y_{1},y_{2},y_{3}\right\} \in E\left(G\right)$,
and at least $\left(1/2+\gamma\right)n$ choices such that $\left\{ x_{3},y_{2},y_{3}\right\} \in E\left(G\right)$,
so by the inclusion-exclusion principle there are at least $2\gamma n$
choices for $y_{3}$ such that both $\left\{ y_{1},y_{2},y_{3}\right\} $
and $\left\{ x_{3},y_{2},y_{3}\right\} $ are in $E\left(G\right)$.
All in all, this gives about $\gamma n^{3}\approx3\gamma\binom{n}{3}$
absorbers rooted at $x_1,x_2,x_3$.

Having shown that every $k$-tuple of vertices supports many absorbers,
one can then often use a straightforward probabilistic argument to
construct an arrangement of absorbers that gives rise to an absorbing
set $X$ as in the strong absorbing lemma. Continuing with the previous
example, if we choose a random set $T$ of say $\left(\gamma/2\right)n$ disjoint triples of vertices,
then for every choice of $x_{1},x_{2},x_{3}$, there are typically
about $\left(3\gamma\right)\left(\gamma/2\right)n$ triples in $T$
which give an absorber rooted on $x_{1},x_{2},x_{3}$. We can then
take $X$ as the set of vertices in the triples in $T$. It is not hard to
check that this set satisfies the assumptions of the strong absorbing
lemma: given a set $W$ of $o\left(n\right)$ vertices outside $X$,
we can partition $W$ into triples $\left\{ x_{1},x_{2},x_{3}\right\} $
and iteratively ``absorb'' them into $T$ to obtain a perfect matching
in $G\left[X\cup W\right]$.

\subsection{Finding absorbers in sparse graphs}

Unfortunately, the ideas sketched above fail in many different ways
in the sparse setting. First, there is the problem of how to actually
find absorbers. It is in general very difficult to understand when
one can find a copy of a specific $k$-graph in a subgraph $G'$ of
a random $k$-graph $G$. Indeed, this is the random Tur\'an problem
described in the introduction, and general results have become available
only very recently. One of the most flexible tools in this area is
the \emph{sparse embedding lemma} proved by Conlon, Gowers, Samotij
and Schacht~\cite{CGSS14} (previously, and sometimes still, known as the \emph{K\L R
conjecture} of Kohayakawa, \L uczak and R\"odl). 

Roughly speaking, the sparse embedding lemma says that for any\footnote{We are not being completely truthful here: strictly speaking, $H$ must be a so-called \emph{linear} $k$-graph, but this restriction turns out not to be particularly important for us.}
$k$-graph $H$, if $p$ is large enough that a random $k$-graph
$G\sim{\operatorname H}^{k}\left(n,p\right)$ typically contains many copies of $H$ (this depends on
a ``local sparseness'' measure of $H$ called \emph{$k$-density}), then $G$ satisfies the
following property: If we apply the sparse regularity lemma to a spanning
subgraph $G'\subseteq G$, and find a copy of $H$ in the resulting
cluster graph $\mathcal{R}$, then there is a corresponding copy of
$H$ in $G'$ itself. Roughly speaking, the sparse embedding lemma allows us to work
in the dense cluster graph, where it is much easier to reason directly
about existence of subgraphs, and then ``pull back'' our findings
to the original graph.

One may hope that we can just repeat the arguments in the proof of
the strong absorbing lemma to find absorbers in the dense cluster
graph, and somehow use the sparse embedding lemma to convert these
into absorbers in the original graph. Unfortunately, life is not this
simple, for (at least...) two reasons. The first issue is that we need our absorbers to satisfy some local sparseness condition, because otherwise we can only work with a very limited range of $p$. It is not obvious how to use existing ``bare-hands'' methods to find such absorbers.

The second issue is that the sparse embedding lemma is not suited
for embedding \emph{rooted} subgraphs. The cluster graph $\mathcal{R}$ is just too rough
a description of $G'$ for it to be possible to deduce information
about specific vertices in $G'$ from information in $\mathcal{R}$.

To attack the first of these issues, we use a novel \emph{non-constructive}
method to find our absorbers. Namely, since an absorber
is built out of matchings, we can use the definition of the Dirac
threshold itself to find absorbers (even if we do not actually know
its value). To be more specific, consider a $k$-graph $G$ with $\delta_{d}\left(G\right)\ge\left(\mu_{d}\left(k\right)+\gamma\right)\binom{n-d}{k-d}$,
and let $M$ be a large constant. Using a concentration inequality,
we can show that that almost all $M$-vertex induced subgraphs
of $G$ have minimum $d$-degree at least $\left(\mu_{d}\left(k\right)+\gamma/2\right)\binom{M-d}{k-d}$,
so if $M$ is large enough, then almost all $M$-vertex induced subgraphs
have a perfect matching. We then have a lot of freedom to construct
a locally sparse absorber using these matchings (specifically, we construct an absorber using an explicit locally sparse ``pattern'' graph).

To overcome the second of the aforementioned issues, we further develop
a ``contraction'' technique we introduced in~\cite{FK19}. The problem is
that the cluster graph does not ``see'' individual vertices; it can
only see large sets of vertices. In the case $k=2$ (that is, the
graph case), an obvious fix would be to consider the set of \emph{neighbours
}(or perhaps neighbours-of-neighbours) of our desired roots, instead
of the roots themselves. However, in the case $k\ge3$, every edge
containing a root vertex $x_{i}$ contains $k-1>1$ other vertices.
That is to say, ``neighbours'' come grouped in sets of size $k-1$
(the collection of all such sets is called the \emph{link $\left(k-1\right)$-graph
}of $x_{i}$). So, it seems we would need an embedding lemma that
works with sets of $(k-1)$-sets of vertices, not just sets of vertices.

The way around this problem is to choose a large matching in the link
$\left(k-1\right)$-graph of each $x_{i}$, and ``contract'' each
of the edges in each of these matchings to a single vertex, to obtain
a contracted graph $G_{\textrm{cont}}'$. If we do this carefully,
the resulting graph can still be viewed as a subgraph of an appropriate
random $k$-graph, so the sparse embedding lemma still applies. It
then suffices to find a suitable ``contracted absorber'' in $G_{\textrm{cont}}'$,
which would correspond to an absorber in the original $k$-graph $G'$.
We can do this with the sparse embedding lemma.

The details of the arguments sketched in this section appear in \cref{sec:absorbers}.

\subsection{Combining the absorbers}

The above discussion gives a rough idea for how to find an absorber rooted at every
$k$-set of vertices, in a suitable spanning subgraph of a random
graph. However, it is still not at all obvious how to combine these
to prove a sparse absorbing lemma. A simple probabilistic argument
as sketched in \cref{subsec:absorbing-method} cannot suffice: unfortunately, there are just not enough absorbers.

We get around the issue as follows. Instead of using our absorbers
to find a matching $M$ which can ``absorb'' every $k$-set of vertices,
we fix a specific ``template'' arrangement of only linearly many $k$-sets we would
like to be able to absorb. It is easy to handle such a small number
of $k$-sets: we can in fact greedily choose \emph{disjoint} absorbers
for each of these special $k$-sets, to obtain an ``absorbing structure''
$H$. Building on ideas due to Montgomery~\cite{Mon14,Mon19}, we show that it is
possible to choose our template arrangement of $k$-sets in such a way that
$H$ has a very special kind of robust matching property: $H$ has
a ``flexible set'' of vertices $Z$ such that $H$ still has a perfect
matching even after any constant fraction of the
vertices in $Z$ are deleted\footnote{Various authors have coined different names for different ways to apply the absorbing method (though these names and their usage do not always seem to be completely consistent). The use of a flexible set $Z$ which can optionally contribute to a desired structure is often called the \emph{reservoir method}, where $Z$ is called a \emph{reservoir}. In particular, Montgomery's approach, in which an absorbing structure is built using a template with a robust matching property, is often called \emph{distributive absorption}, or sometimes the \emph{absorber-template method}.}.

We can then let $X=V\left(H\right)$, and prove that $X$ gives a
sparse absorbing lemma, as follows. For any small set $W$ of vertices outside $X$, we can first find a matching $M_{1}$ covering
$W$ and a constant fraction of $Z$, using a hypergraph matching criterion
due to Aharoni and Haxell~\cite{AH00}. Then, our robust matching property implies
that $H-V(M_{1})$ has a perfect matching $M_{2}$, so $M_{1}\cup M_{2}$
is a perfect matching of $G\left[W\cup X\right]$.

The details of this argument, along with the statement of our sparse absorbing lemma and the deduction of \cref{thm:resilience}, are in \cref{sec:sparse-absorption}.

\section{Convergence of the Dirac threshold}\label{sec:convergence}

In this section we prove \cref{thm:convergence}, which will be a good
warm-up for some of the ideas that we will develop further to prove
\cref{thm:resilience}.

The main ingredient in the proof of \cref{thm:convergence} is the \emph{strong
absorbing lemma} due to H\'an, Person and Schacht~\cite[Lemma~2.4]{HPS09} (building on ideas of R\"odl, Ruci\'nski and Szemer\'edi~\cite{RRS06,RRS09}).
\begin{lem}
\label{lem:strong-absorbing-lemma}For any positive integers $d<k$,
and any $\gamma>0$, there is $n_{0}\in{\mathbb N}$ such that for every $n>n_{0}$
the following holds. Suppose that $G$ is a $k$-graph on $n$ vertices
with $\delta_{d}\left(G\right)\ge\left(1/2+\gamma\right)\binom{n-d}{k-d}$.
Then there is a set $X\subseteq V(G)$ such that
\begin{enumerate}
\item [(i)]$|X|\le \left(\gamma/2\right)^{k}n$, and
\item [(ii)]for every set $W\subseteq V\left(G\right)\backslash X$
of size at most $\left(\gamma/2\right)^{2k}n$ and divisible by $k$,
there is a matching in $G$ covering exactly the vertices of $X\cup W$.
\end{enumerate}
\end{lem}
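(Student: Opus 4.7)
The plan is to follow the absorbing-method paradigm of R\"odl, Ruci\'nski and Szemer\'edi. Call a set $A \subseteq V(G) \setminus T$ of some fixed constant size $m = m(k)$ a \emph{$T$-absorber} if both $G[A]$ and $G[A \cup T]$ admit perfect matchings. I will construct $X$ as the vertex-union of a pairwise disjoint family of $m$-sets, chosen so that many members of the family serve as $T$-absorbers for every $k$-set $T \subseteq V(G)$.

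The first main step is to show that for every $k$-set $T$, the number of $T$-absorbers is at least $\eta n^m$ for some $\eta = \eta(\gamma,k) > 0$. I would fix a specific absorber structure---for example, the $2k$-vertex configuration consisting of two disjoint edges together with a perfect matching on this configuration united with $T$ using edges that meet $T$ in prescribed ways---and build one greedily, vertex by vertex. At each step, choosing a new vertex imposes at most two edge-existence conditions on an already-fixed $d$-set (one from the matching on $A$, one from the matching on $A \cup T$). The hypothesis $\delta_d(G) \geq (1/2 + \gamma)\binom{n-d}{k-d}$ combined with inclusion--exclusion guarantees at least $2\gamma\binom{n-d}{k-d}$ choices satisfying both conditions simultaneously; this is precisely where the constant $1/2$ in the hypothesis is essential.

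The second main step is a probabilistic selection. Include each $m$-subset of $V(G)$ independently with probability $p = c n^{1-m}$, for a constant $c = c(\gamma,k)$ to be chosen, to form a random family $\mathcal{F}$. By Chernoff-type concentration and a union bound over the $\binom{n}{k}$ choices of $T$, a.a.s.\ we have $|\mathcal{F}| \leq 2cn/m$; every $T$ has at least $c\eta n/2$ $T$-absorbers in $\mathcal{F}$; and the number of intersecting pairs of members of $\mathcal{F}$ is $O(n)$. Deleting one member from each intersecting pair yields a pairwise disjoint subfamily $\mathcal{F}'$ which, by choosing $c$ and $m$ so that $2c \leq (\gamma/2)^k$ and $c\eta$ comfortably dominates $(\gamma/2)^{2k}$, still contains at least $(\gamma/2)^{2k} n + 1$ $T$-absorbers for every $T$. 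Set $X$ to be the union of the vertex sets of the members of $\mathcal{F}'$, giving $|X| \leq 2cn \leq (\gamma/2)^k n$. To verify the absorbing property, given any admissible $W$, partition it arbitrarily into $k$-sets $T_1, \ldots, T_s$ (with $s \leq (\gamma/2)^{2k} n / k$) and greedily assign to each $T_i$ a distinct absorber $A_i \in \mathcal{F}'$; swapping in the perfect matching of $G[A_i \cup T_i]$ for the perfect matching of $G[A_i]$, for each $i$, produces a perfect matching of $G[X \cup W]$.

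The main obstacle is the absorber-counting step: one must choose the absorber structure carefully so that its abundance follows from a clean application of the $1/2 + \gamma$ degree condition via inclusion--exclusion. Once the right constant-size structure is identified and the count $\eta n^m$ is in hand, the remainder is a standard probabilistic deletion argument combined with a greedy absorption.
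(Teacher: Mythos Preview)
The paper does not actually prove this lemma: it is quoted verbatim as \cite[Lemma~2.4]{HPS09} and used as a black box. So there is no ``paper's own proof'' to compare against beyond the informal sketch in \cref{subsec:absorbing-method}, which matches your outline almost exactly (counting absorbers via inclusion--exclusion from the $1/2+\gamma$ condition, then a probabilistic selection to obtain a disjoint family with many absorbers for every $k$-tuple). Your proposal is the standard H\'an--Person--Schacht argument and is correct in outline; the only point where you are vague is the precise choice of the $m$-vertex absorber structure for general $d<k$, but the paper is equally vague there (it only works out the $k=3$, $d=2$ case explicitly) and refers to \cite{HPS09} for the details.
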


Now, define
\[
\tilde{\mu}_{d}\left(k\right)=\liminf_{n\to\infty}\frac{m_{d}\left(k,n\right)}{\binom{n-d}{k-d}},
\]
where $n\to\infty$ along the integers $n$ divisible by $k$. Our
main goal is to prove the following lemma.
\begin{lem}
\label{lem:convergence-lemma}Fix positive integers $d<k$ and consider
any $\gamma>0$. Then for sufficiently large $n$ divisible by $k$, every $n$-vertex
$k$-graph $G$ with $\delta_{d}\left(G\right)\ge\left(\tilde{\mu}_{d}\left(k\right)+\gamma\right)\binom{n-d}{k-d}$
has a perfect matching.
\end{lem}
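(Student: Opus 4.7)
The plan is to apply the \emph{absorbing method}: extract an absorbing set $X$ via \cref{lem:strong-absorbing-lemma}, cover all but a small part of $V(G)\setminus X$ by a matching (using the definition of $\tilde\mu_d(k)$), and then absorb the uncovered vertices with $X$. Standard ``space barrier'' constructions (for example, taking all $k$-subsets of $V(G)$ with an even number of vertices in a half-sized subset $A$) give $m_d(k,n) \ge (\tfrac{1}{2}-o(1))\binom{n-d}{k-d}$, hence $\tilde\mu_d(k) \ge \tfrac{1}{2}$. In particular $\delta_d(G) \ge (\tfrac{1}{2}+\gamma)\binom{n-d}{k-d}$, so \cref{lem:strong-absorbing-lemma} yields $X \subseteq V(G)$ of size at most $(\gamma/2)^k n$ with the stated absorbing property. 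A short calculation shows that $G' := G - X$ on $n' := n - |X|$ vertices still satisfies $\delta_d(G') \ge (\tilde\mu_d(k) + \gamma/2)\binom{n'-d}{k-d}$.

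Next, by the definition of $\tilde\mu_d(k)$ as a liminf along multiples of $k$, there exist arbitrarily large $n_0$ divisible by $k$ with $m_d(k,n_0) \le (\tilde\mu_d(k) + \gamma/4)\binom{n_0-d}{k-d}$; I would fix such an $n_0$ with $n_0 \gg \log n$, and take a uniformly random ordered partition of $V(G')$ into parts $S_1,\dots,S_r$ of size $n_0$ plus a leftover $L$ with $|L| < n_0$. For any fixed $d$-set $T \subseteq V(G')$, conditional on $T \subseteq S_i$ the set $S_i \setminus T$ is a uniform random $(n_0-d)$-subset of $V(G') \setminus T$, and a direct hypergeometric calculation gives
\[
\E\!\left[\deg_{G'[S_i]}(T) \;\Big|\; T \subseteq S_i\right] \;=\; \frac{\deg_{G'}(T)}{\binom{n'-d}{k-d}}\binom{n_0-d}{k-d} \;\ge\; \left(\tilde\mu_d(k)+\tfrac{\gamma}{2}\right)\binom{n_0-d}{k-d}.
\]
If every $S_i$ satisfies $\delta_d(G'[S_i]) \ge m_d(k,n_0)$, then the definition of $m_d(k,n_0)$ supplies a perfect matching in each $G'[S_i]$ and their union covers $V(G')\setminus L$; so it suffices to show that such a partition exists with positive probability.

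I expect the main obstacle to be this concentration step. The quantity $\deg_{G'[S_i]}(T)$ counts the edges of the auxiliary $(k-d)$-uniform hypergraph $\mathcal{N}_T := \{U : T \cup U \in E(G')\}$ contained in the random subset $S_i\setminus T$ of $V(G')\setminus T$; standard concentration inequalities for such subset statistics (Hoeffding's inequality when $k-d=1$, Chernoff bounds via negative association or Janson-type bounds in general) give a tail estimate of the form $\exp(-c_{\gamma,k,d}\, n_0)$ on the probability that $\deg_{G'[S_i]}(T) < m_d(k,n_0)$ conditional on $T \subseteq S_i$. Since each $T$ lies in at most one part, summing over $i$ yields the same bound per $T$, and a union bound over the $O(n^d)$ choices of $T$ gives total failure probability $O(n^d \exp(-c_{\gamma,k,d}\, n_0)) = o(1)$ once $n_0$ is a sufficiently large multiple of $\log n$. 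The delicate point is that $n_0$ must lie simultaneously in the (a priori possibly sparse) liminf-witness set and be at least a large multiple of $\log n$; this is possible precisely because the liminf-witness set is infinite. On the resulting good event, the uncovered set $W := L \subseteq V(G)\setminus X$ has size at most $n_0 \le (\gamma/2)^{2k} n$ and is divisible by $k$ (as $n$, $|X|$, and each matching size are), so the absorbing property of $X$ supplies a perfect matching of $G[X\cup W]$; combined with the matchings on the parts, this completes a perfect matching of $G$.
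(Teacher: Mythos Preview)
Your overall strategy coincides with the paper's (absorbing set via \cref{lem:strong-absorbing-lemma}, then a random partition of $V(G-X)$ into pieces whose size is a liminf witness, perfect matchings on the pieces via the definition of $m_d(k,\cdot)$, and absorption of the remainder), and the absorbing step, the degree estimate for $G-X$, and the divisibility checks are all fine.

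The gap is in your choice of the piece size $n_0$. You require $n_0$ simultaneously to (i) be a liminf witness, (ii) satisfy $n_0 \ge C\log n$ so that the union bound over $O(n^d)$ $d$-sets succeeds, and (iii) satisfy $n_0 \le (\gamma/2)^{2k} n$ so that the leftover $L$ can be absorbed. Your justification ``because the liminf-witness set is infinite'' does not suffice: an infinite set may have arbitrarily large gaps, and if consecutive witnesses $a_j < a_{j+1}$ ever satisfy $a_{j+1} > (\gamma/2)^{2k} e^{a_j/C}$, then for every $n$ in the non-empty interval $\bigl(e^{a_j/C},\, a_{j+1}(\gamma/2)^{-2k}\bigr)$ there is no admissible $n_0$. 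Since the whole point of the lemma is to \emph{establish} convergence, you cannot assume any regularity of the witness set here. The paper sidesteps this by taking the piece size $Q$ to be a \emph{fixed constant} witness, so (iii) is automatic for large $n$, and dropping the demand that \emph{all} parts be good: each part fails its minimum-degree condition with probability at most $\lambda:=\binom{Q}{d} e^{-c\eta^2 Q}$, and a first-moment argument yields a partition with at most a $\lambda$-fraction of bad parts, uncovering at most $\lambda n$ vertices, which is below $(\gamma/2)^{2k} n$ once $Q$ is chosen large enough. Your proof is easily repaired along these lines.
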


Before we explain how to prove \cref{lem:convergence-lemma} we show
how it implies \cref{thm:convergence}.
\begin{proof}[Proof of \cref{thm:convergence}, given \cref{lem:convergence-lemma}]
\cref{lem:convergence-lemma} implies that $m_{d}\left(k,n\right)/\binom{n-d}{k-d}\le\tilde{\mu}_{d}\left(k\right)+\gamma$
for sufficiently large $n$ (divisble by $k$), and since $\gamma>0$ was arbitrary it
follows that 
\[
\limsup_{n\to\infty}\frac{m_{d}\left(k,n\right)}{\binom{n-d}{k-d}}\le\tilde{\mu}_{d}\left(k\right)=\liminf_{n\to\infty}\frac{m_{d}\left(k,n\right)}{\binom{n-d}{k-d}},
\]
from which it follows that $m_{d}\left(k,n\right)/\binom{n-d}{k-d}$
converges to a limit $\mu_{d}\left(k\right)=\tilde{\mu}_{d}\left(k\right)$.
\end{proof}
Now, our proof of \cref{lem:convergence-lemma} will consist of two
steps. First, we prove that the conditions
of \cref{lem:convergence-lemma} ensure an \emph{almost-perfect} matching,
then we will use the strong absorbing lemma (\cref{lem:strong-absorbing-lemma})
to transform this into a perfect matching. The following lemma encapsulates
the first of these steps.
\begin{lem}
\label{lem:almost-perfect-convergence}Fix positive integers $d<k$
and consider any $\eta>0$. Then for sufficiently large $n$,
every $n$-vertex $k$-graph $G$ with $\delta_{d}\left(G\right)\ge\left(\tilde{\mu}_{d}\left(k\right)+\eta\right)\binom{n-d}{k-d}$
has a matching covering all but $o\left(n\right)$ vertices.
\end{lem}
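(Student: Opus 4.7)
The plan is to combine the liminf definition of $\tilde{\mu}_{d}(k)$ with a random partitioning argument, reducing the problem of finding an almost-perfect matching in $G$ to that of finding perfect matchings in many disjoint constant-size induced subgraphs. Fix any $\epsilon>0$; it suffices to show that, for all sufficiently large $n$ (in terms of $\eta$ and $\epsilon$), $G$ contains a matching covering all but at most $2\epsilon n$ vertices. By the definition of $\tilde{\mu}_{d}(k)$ as a liminf over multiples of $k$, choose a large positive integer $N$, divisible by $k$, such that every $N$-vertex $k$-graph $H$ with $\delta_{d}(H)\ge(\tilde{\mu}_{d}(k)+\eta/2)\binom{N-d}{k-d}$ contains a perfect matching.

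Partition $V(G)$ uniformly at random into $\floor{n/N}$ disjoint blocks $S_{1},\dots,S_{\floor{n/N}}$ of size $N$, leaving fewer than $N$ vertices uncovered. Call $S_{i}$ \emph{good} if $\delta_{d}(G[S_{i}])\ge(\tilde{\mu}_{d}(k)+\eta/2)\binom{N-d}{k-d}$, and \emph{bad} otherwise. Every good block supports a perfect matching of $G[S_{i}]$, and since the blocks are disjoint, the union of these matchings is a matching of $G$ covering every vertex of every good block.

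The key estimate is that, for $N$ sufficiently large, a random block is bad with probability at most $\epsilon$. Fix any $d$-subset $T\subseteq V(G)$. Conditional on $T\subseteq S_{i}$, the remainder $S_{i}\setminus T$ is a uniform random $(N-d)$-subset of $V(G)\setminus T$, and so for each of the at least $(\tilde{\mu}_{d}(k)+\eta)\binom{n-d}{k-d}$ edges of $G$ through $T$, the probability that its remaining $k-d$ vertices all land in $S_{i}$ is exactly $\binom{N-d}{k-d}/\binom{n-d}{k-d}$. Hence
\[
\E[\deg_{G[S_{i}]}(T)\mid T\subseteq S_{i}]\ge(\tilde{\mu}_{d}(k)+\eta)\binom{N-d}{k-d}.
\]
The indicator variables in this sum are functions of a sampling-without-replacement experiment, hence negatively associated, so a Chernoff/Hoeffding-type bound gives that $\deg_{G[S_{i}]}(T)$ falls below $(\tilde{\mu}_{d}(k)+\eta/2)\binom{N-d}{k-d}$ with probability at most $\exp(-c\binom{N-d}{k-d})$ for some $c=c(\eta)>0$. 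A union bound over the $\binom{N}{d}$ possible $d$-subsets of $S_{i}$ gives $\Pr[S_{i}\text{ is bad}]\le\binom{N}{d}\exp(-c\binom{N-d}{k-d})$, which tends to $0$ as $N\to\infty$; we fix $N$ large enough that this is at most $\epsilon$.

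By linearity of expectation, the expected number of bad blocks is at most $\epsilon\floor{n/N}$, so some realised partition has at most $\epsilon\floor{n/N}$ bad blocks, contributing at most $\epsilon n$ unmatched vertices; together with the fewer than $N$ leftover vertices, at most $2\epsilon n$ vertices remain uncovered for $n$ large, yielding the desired matching. The main obstacle is the concentration step: the union bound over all $\binom{N}{d}$ possible $d$-subsets must be absorbed by the exponential decay in $\binom{N-d}{k-d}$, which is possible precisely because increasing $N$ makes the expected within-block degree, and hence the Chernoff exponent, grow without bound while the union-bound factor grows only polynomially.
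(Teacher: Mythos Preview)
Your approach is essentially identical to the paper's: pick a large constant block size $N$ (the paper writes $Q$) via the liminf definition, randomly partition $V(G)$ into $N$-blocks, show most blocks inherit the degree hypothesis and hence contain perfect matchings, and combine. The only slip is in the concentration step: the edge-indicators $X_e$ (for ``$e\setminus T\subseteq S_i$'') are \emph{not} negatively associated in general --- when two edges through $T$ share a vertex outside $T$, the corresponding indicators are positively correlated. The paper (in its lemma on random-subset degrees) instead uses a bounded-differences inequality, obtaining a bound of shape $\exp(-c\eta^{2}N)$ rather than your $\exp(-c\binom{N-d}{k-d})$; this weaker exponent still beats the polynomial factor $\binom{N}{d}$ from the union bound, so your argument goes through once the concentration is justified this way.
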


To prove \cref{lem:almost-perfect-convergence} we need the following
lemma showing that random subgraphs of hypergraphs typically inherit minimum-degree conditions. We state this in a slightly more general form than we need here, for later use.
\begin{lem}
\label{lem:random-subset-degrees}There is $c=c(k)>0$ such that the following holds. Consider an $n$-vertex $k$-graph
$G$ where all but $\delta\binom{n}{d}$ of the $d$-sets
have degree at least $\left(\mu+\eta\right)\binom{n-d}{k-d}$.
Let $S$ be a uniformly random subset of $Q\ge 2d$ vertices of $G$. Then with probability
at least $1-\binom{Q}{d}\left(\delta+e^{-c\eta^{2}Q}\right)$,
the random induced subgraph $G\left[S\right]$ has minimum $d$-degree
at least $\left(\mu+\eta/2\right)\binom{Q-d}{k-d}$.
\end{lem}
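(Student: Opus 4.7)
The plan is to apply a union bound over the $d$-subsets of $V(G)$ that could appear as a low-degree $d$-subset of $S$. For each $T \in \binom{V(G)}{d}$, let $B_T$ denote the bad event that $T\subseteq S$ and $\deg_{G[S]}(T) < (\mu+\eta/2)\binom{Q-d}{k-d}$; it suffices to show $\sum_T \Pr[B_T] \le \binom{Q}{d}(\delta+e^{-c\eta^2 Q})$. Call $T$ \emph{bad} if $\deg_G(T) < (\mu+\eta)\binom{n-d}{k-d}$, and \emph{good} otherwise. For each bad $T$, the trivial bound $\Pr[B_T] \le \Pr[T\subseteq S] = \binom{Q}{d}/\binom{n}{d}$ suffices; since there are at most $\delta\binom{n}{d}$ bad $T$ by hypothesis, their total contribution is at most $\delta\binom{Q}{d}$.

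For a good $T$, conditional on $T \subseteq S$ the set $R := S \setminus T$ is a uniformly random $(Q-d)$-subset of $V(G)\setminus T$, and $X := \deg_{G[S]}(T)$ equals the number of edges of the link $(k-d)$-graph $\mathcal{H}$ of $T$ in $G$ that are contained in $R$. The combinatorial identity $\binom{Q-d}{k-d}/\binom{n-d}{k-d} = \binom{n-k}{Q-k}/\binom{n-d}{Q-d}$ gives $\mathbb{E}[X] = \deg_G(T)\cdot\binom{Q-d}{k-d}/\binom{n-d}{k-d} \ge (\mu+\eta)\binom{Q-d}{k-d}$, so the main task reduces to showing
\[
\Pr[X \le \mathbb{E}[X] - (\eta/2)\tbinom{Q-d}{k-d} \mid T\subseteq S] \le e^{-c\eta^2 Q}.
\]
Granting this, each good $T$ contributes at most $\binom{Q}{d}\binom{n}{d}^{-1}e^{-c\eta^2 Q}$ to the union bound, and summing over the at most $\binom{n}{d}$ good $T$ yields the remaining $\binom{Q}{d}e^{-c\eta^2 Q}$ term.

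The main obstacle is establishing the displayed concentration. A naive McDiarmid/Azuma argument is too weak, since a single vertex of $R$ can change $X$ by as much as $\binom{n-d-1}{k-d-1}$, which does not match the required $\eta^2 Q$ scaling. My plan is to first pass to the Bernoulli model: let $R_p$ be obtained by including each vertex of $V(G)\setminus T$ independently with probability $p := (Q-d)/(n-d)$, and let $X_p$ be the corresponding edge count. Since $X$ is monotone in $R$ and the median of $|R_p|$ is within $1$ of its mean $Q-d$, a standard coupling argument (conditioning on $|R_p|\le Q-d$) yields $\Pr[X \le a] \le 2\Pr[X_p \le a]$. I would then apply Janson's inequality, which gives
\[
\Pr[X_p \le \mathbb{E}[X_p] - t] \le \exp(-t^2/(2(\mathbb{E}[X_p] + \Delta))),
\]
with $\Delta := \sum_{\{e,e'\}\subseteq E(\mathcal{H}),\,e\cap e'\ne\emptyset}\Pr[e\cup e'\subseteq R_p]$. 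A direct count, using $|\mathcal{H}| \le \binom{n-d}{k-d}$ and $p=O(Q/n)$, yields $\mathbb{E}[X_p] = \Theta((\mu+\eta)Q^{k-d})$ and $\Delta = O(Q^{2(k-d)-1})$ (the latter dominated by pairs of edges sharing a single vertex, so in particular $\Delta=0$ when $k-d=1$). Taking $t = \Theta(\eta Q^{k-d})$, the denominator $\mathbb{E}[X_p]+\Delta$ is $O(Q^{2(k-d)-1})$ (or $O(Q)$ when $k-d=1$), so the exponent simplifies to $\Theta(\eta^2 Q)$, exactly as needed.
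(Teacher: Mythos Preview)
Your argument is correct: the union-bound reduction, the Bernoulli coupling, and the lower-tail Janson estimate all go through, and the orders of $\mathbb{E}[X_p]$ and $\Delta$ are exactly what is needed to get an $e^{-c\eta^2 Q}$ bound. However, your route is considerably more circuitous than the paper's, and the detour is based on a miscalculation. You assert that a single vertex of $R$ can change $X$ by as much as $\binom{n-d-1}{k-d-1}$, but this is not the relevant Lipschitz constant: since $X$ counts only those link-edges that lie \emph{entirely inside} $R$, swapping one vertex of $R$ for one outside can alter $X$ by at most $\binom{Q-d-1}{k-d-1}$. With this correct bound, a swap-based concentration inequality for uniform random subsets (the paper cites \cite[Corollary~2.2]{GIKM17}) gives directly
\[
\Pr\Bigl[X<\mathbb{E}[X]-\tfrac{\eta}{2}\tbinom{Q-d}{k-d}\Bigr]\le 2\exp\left(-\frac{2\bigl(\tfrac{\eta}{2}\tbinom{Q-d}{k-d}\bigr)^2}{(Q-d)\tbinom{Q-d-1}{k-d-1}^2}\right)=2\exp\left(-\frac{\eta^2(Q-d)}{2(k-d)^2}\right),
\]
which is $e^{-c\eta^2 Q}$ for $c\approx 1/(4k^2)$. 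So the paper avoids the passage to the Bernoulli model and Janson's inequality entirely. Your approach does buy something in principle---Janson is insensitive to worst-case vertex degrees, so it would still work in settings where the link hypergraph has a few vertices of very high degree---but in the present lemma that robustness is not needed.
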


\begin{proof}
Let $W^{\left(d\right)}$ be the collection of $d$-sets with degree
less than $\left(\mu +\eta\right)\binom{n-d}{k-d}$ in $G$, and randomly order the vertices of $G$ as $v_1,\dots,v_n$, so we may take $S=\{v_{1},\dots,v_{Q}\}$. We will prove that
\[
\Pr\left(\deg_{S}\left(\left\{ v_{1},\dots,v_{d}\right\} \right)<\left(\mu+\eta/2\right)\binom{Q-d}{k-d}\right)\le\delta+e^{-c\eta^{2}Q}
\]
(where we abuse notation slightly and write $\deg_{S}\left(\left\{ v_{1},\dots,v_{d}\right\} \right)$ for the number of edges which contain $v_{1},\dots,v_{d}$ and $k-d$ vertices of $S$).
The desired result will then follow from symmetry and the union bound.

First note that the probability of the event $\left\{ v_{1},\dots,v_{d}\right\} \in W^{\left(d\right)}$
is at most $\delta$. Now, condition on any outcome of $\left\{ v_{1},\dots,v_{d}\right\}$ which is not in $W^{\left(d\right)}$. Then $\{v_{d+1},\dots,v_Q\}$ is a uniformly random subset of the vertices of $G$ other than $v_{1},\dots,v_{d}$, and $\E \deg_{S}\left(\left\{ v_{1},\dots,v_{d}\right\} \right)\ge (\mu+\eta)\binom{Q-d}{k-d}$. Also, making any ``swap'' to our subset $\{v_{d+1},\dots,v_Q\}$ (that is, exchanging any element with an element outside this subset) affects $\deg_{S}\left(\left\{ v_{1},\dots,v_{d}\right\} \right)$ by at most $\binom {Q-d-1}{k-d-1}$. So, by a concentration inequality such as \cite[Corollary~2.2]{GIKM17},
conditioned on our outcome of $\left\{ v_{1},\dots,v_{d}\right\} \notin W^{\left(d\right)}$,
the probability that $\left\{ v_{1},\dots,v_{d}\right\} $ has
degree less than $\left(\mu+\eta/2\right)\binom{Q-d}{k-d}$ in $G[S]$
is at most
$$2\exp\left(-\frac{2\left(\frac{\eta}2 \binom {Q-d}{k-d}\right)^2}{(Q-d)\binom {Q-d-1}{k-d-1}^2}\right)\le e^{-c\eta^{2}Q}$$
(for say $c=1/(4k^2)$, recalling that $Q\ge 2d$), as claimed.
\end{proof}

Now we prove \cref{lem:almost-perfect-convergence}.
\begin{proof}[Proof of \cref{lem:almost-perfect-convergence}]
Choose large $Q$, divisible by $k$, such that $m_{d}\left(k,Q\right)/\binom{Q-d}{k-d}\le\tilde{\mu}_{d}\left(k\right)+\eta/2$
(this is possible by the definition of $\tilde{\mu}_{d}\left(k\right)$). Let $\lambda=\binom{Q}{d}e^{-c\eta^{2}Q}$ be as in \cref{lem:random-subset-degrees} (taking $\delta=0$), and note that we can make $\lambda$ arbitrarily small by making $Q$ large.

Now, we randomly partition the vertex set into $n/Q$ subsets of
size $Q$. By \cref{lem:random-subset-degrees}, with positive probability
all but a $\lambda$-fraction of the subsets have minimum
degree at least $\left(\tilde{\mu}_{d}\left(k\right)+\eta/2\right)\binom{Q-d}{k-d}$.
By our choice of $Q$, each of these $Q$-vertex subsets $S\subseteq V(G)$ has the property that $G\left[S\right]$ has a
perfect matching, and we can combine these to find a matching covering
all but $\lambda n$ vertices. Since $\lambda$ could have
been arbitrarily small, this implies that we can find a matching covering
all but $o\left(n\right)$ vertices.
\end{proof}
Now, it is straightforward to deduce \cref{lem:convergence-lemma} from \cref{lem:strong-absorbing-lemma} and \cref{lem:almost-perfect-convergence},
concluding our proof of \cref{thm:convergence}.
\begin{proof}[Proof of \cref{lem:convergence-lemma}]
From the discussion in the introduction, note that $\tilde{\mu}_{d}\left(k\right)\ge1/2$,
so the assumptions in \cref{lem:strong-absorbing-lemma} are satisfied
and we can find an ``absorbing set'' $X\subseteq V(G)$ of at most
$\left(\gamma/2\right)^{k}n$ vertices. Let $n'=n-|X|$
and observe that since $X$ is so small, we have $\delta_{d}\left(G-X\right)\ge\left(\tilde{\mu}_{d}\left(k\right)+\gamma/2\right)\binom{n'-d}{k-d}$.
By \cref{lem:almost-perfect-convergence}, it follows that $G-X$ has
a matching covering all vertices except a set $W$ of size $o\left(n\right)$. By the defining property
of the absorbing set $X$, it follows that $G$ has a perfect
matching.
\end{proof}

\section{The sparse regularity method}\label{sec:regularity}

The proof of \cref{thm:resilience} makes heavy use of the sparse regularity method. So, we will need a sparse version of a hypergraph regularity lemma. There is a general hypergraph regularity lemma which is quite complicated to state and prove (see \cite{Gow07,RS04}), but we will only need (a sparse version of) the so-called ``weak'' hypergraph regularity lemma (see \cite{KNRS10}). Weak hypergraph regularity lemmas are suitable for embedding \emph{linear} hypergraphs, which are hypergraphs in which no pair of edges share more than one vertex.

We remark that this section closely mirrors \cite[Section~4]{FK19}, though some of the lemma statements are slightly more general.

To state our sparse hypergraph regularity lemma we first need to make some basic definitions.
\begin{defn}
Let $\varepsilon,\eta>0$, $D>1$ and $0\leq p\leq1$ be arbitrary parameters.
\begin{itemize}
\item \textbf{Density:}
Consider disjoint vertex sets $X_{1},\dots,X_{k}$ in a
$k$-graph $G$. Let $e\left(X_{1},\ldots,X_{k}\right)$ be the number of edges with a vertex in each $X_i$. Let
\[
d\left(X_{1},\ldots,X_{k}\right)=\frac{e\left(X_{1},\ldots,X_{k}\right)}{\left|X_{1}\right|\dots\left|X_{k}\right|}
\]
be the \emph{density} between $X_{1},\dots,X_{k}$.
\item \textbf{Regular tuples:} A $k$-partite $k$-graph with
parts $V_{1},\dots,V_{k}$ is \emph{$\left(\varepsilon,p\right)$-regular}
if, for every $X_{1}\subseteq V_{1},\dots,X_{k}\subseteq V_{k}$
with $\left|X_{i}\right|\ge\varepsilon\left|V_{i}\right|$, the density
$d\left(X_{1},\dots,X_{k}\right)$ of edges between $X_{1},\dots,X_{k}$
satisfies 
\[
\left|d\left(X_{1},\dots,X_{k}\right)-d\left(V_{1},\dots,V_{k}\right)\right|\le\varepsilon p.
\]
\item \textbf{Regular partitions:} A partition of the vertex set of a $k$-graph
into $t$ parts $V_{1},\dots,V_{t}$ is said to be $\left(\varepsilon,p\right)$-regular
if it is an equipartition (meaning that the sizes of the parts differ by at most one), and for all but at most $\varepsilon \binom {t}{k}$ of the
$k$-sets $\left\{V_{i_{1}},\dots,V_{i_{k}}\right\}$, the induced
$k$-partite $k$-graph between $V_{i_{1}},\dots,V_{i_{k}}$ is $\left(\varepsilon,p\right)$-regular.
\item \textbf{Upper-uniformity:} A $k$-graph $G$ is \emph{$\left(\eta,p,D\right)$-upper-uniform}
if for any choice of disjoint subsets $X_{1},\dots,X_{k}$ with $\left|X_{1}\right|,\dots,\left|X_{k}\right|\ge\eta\left|V\left(G\right)\right|$,
we have $d\left(X_{1},\dots,X_{k}\right)\le Dp$.
\end{itemize}
\end{defn}

Now, our sparse weak hypergraph regularity lemma is as follows. We omit
its proof since it is straightforward to adapt a proof of the sparse
graph regularity lemma (see \cite{KR03} for a sparse regularity lemma
for graphs, and see \cite[Theorem~9]{KNRS10} for a weak regularity
lemma for dense hypergraphs).
\begin{lem}
\label{lem:sparse-regularity}For every $\varepsilon,D>0$ and every
positive integer $t_{0}$, there exist $\eta>0$ and $T\in{\mathbb N}$ such
that for every $p\in\left[0,1\right]$, every $\left(\eta,p,D\right)$-upper-uniform
$k$-graph $G$ with at least $t_{0}$ vertices admits an $\left(\varepsilon,p\right)$-regular
partition $V_{1},\dots,V_{t}$ of its vertex set into $t_{0}\le t\le T$
parts.
\end{lem}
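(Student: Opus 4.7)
The plan is to adapt the classical Szemer\'edi energy-increment argument to the sparse $k$-uniform setting, combining the strategies of Kohayakawa--R\"odl for sparse graphs with those of Kohayakawa--Nagle--R\"odl--Schacht for weak dense-hypergraph regularity. The scheme is the standard one: define a ``mean-square density'' functional on equipartitions, start from an arbitrary equipartition into $t_0$ parts, iteratively refine whenever the current partition is not $(\varepsilon,p)$-regular, and show that each refinement raises this functional by a definite amount. Since upper-uniformity bounds the functional from above, the procedure must terminate after a bounded number of steps, yielding the desired regular partition.

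Concretely, for an equipartition $\mathcal{P}=\{V_1,\ldots,V_t\}$ I would define
\[
\operatorname{ind}(\mathcal{P}) = \frac{1}{p^2 \binom{t}{k}} \sum_{\{i_1,\ldots,i_k\}} d(V_{i_1},\ldots,V_{i_k})^2,
\]
the sum running over unordered $k$-subsets of $\{1,\ldots,t\}$. Provided $\eta$ is small enough that every part has size at least $\eta n$, the $(\eta,p,D)$-upper-uniformity hypothesis forces each density to satisfy $d(V_{i_1},\ldots,V_{i_k}) \le Dp$, so $\operatorname{ind}(\mathcal{P}) \le D^2$ throughout the iteration. The refinement/energy-increment step is the heart of the argument: if $\mathcal{P}$ is not $(\varepsilon,p)$-regular, then at least $\varepsilon\binom{t}{k}$ of the $k$-tuples of parts are irregular, each witnessed by subsets on which the density deviates from the ambient density by more than $\varepsilon p$. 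Simultaneously refining each $V_i$ according to all such witness subsets, and then re-equipartitioning each piece into blocks of a common size, produces a new equipartition $\mathcal{P}'$ with $\operatorname{ind}(\mathcal{P}') \ge \operatorname{ind}(\mathcal{P}) + c(\varepsilon,k)$ for some $c(\varepsilon,k)>0$. This follows from the usual Cauchy--Schwarz/convexity identity, whose sparse form (normalised by $p^2$) is exactly the one used in Kohayakawa--R\"odl.

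Putting these together, the total number of iterations is at most $D^2 / c(\varepsilon,k)$, and each iteration multiplies the number of parts by a bounded factor, yielding a tower-type upper bound $T$ on the number of parts in the final partition. I would then take $\eta$ of order $1/T$, so that every part of every intermediate partition has at least $\eta n$ vertices, validating the use of upper-uniformity at every step. The main conceptual obstacle is the order of quantifiers: $\eta$ must be chosen \emph{after} $T$ has been determined from the iteration bound, which is handled exactly as in Kohayakawa--R\"odl. Secondary issues include maintaining an exact equipartition after refinement (handled by shuffling at most $O(t^{k-1})$ vertices between parts, which perturbs the index by only $O(1/t)$ and is absorbed into the increment) and dealing with the ``exceptional'' contribution of $k$-tuples with parts containing fewer than $k$ distinct elements, which collectively contribute $O(1/t)$ to the index and are likewise negligible against $c(\varepsilon,k)$.
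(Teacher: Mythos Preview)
Your proposal is correct and is exactly the approach the paper has in mind: the paper omits the proof entirely, remarking only that it is a straightforward adaptation of the sparse graph regularity lemma of Kohayakawa--R\"odl combined with the weak dense-hypergraph regularity lemma of Kohayakawa--Nagle--R\"odl--Schacht, which is precisely the energy-increment scheme you outline. One small imprecision: each refinement step does not multiply the number of parts by a bounded factor but rather by roughly $2^{t^{k-1}}$ (from the common refinement by all witness sets), which is why the bound on $T$ is tower-type; otherwise your sketch is accurate.
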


For us, the most crucial aspect of the sparse regularity lemma is that it can be used to give a rough description of a sparse $k$-graph in terms of a dense \emph{cluster $k$-graph} which we now define.
\begin{defn}
Given an $\left(\varepsilon,p\right)$-regular partition $V_{1},\dots,V_{t}$
of the vertex set of a $k$-graph $G$, the \emph{cluster hypergraph}
is the $k$-graph whose vertices are the clusters $V_{1},\dots,V_{t}$,
with an edge $\left\{ V_{i_{1}},\dots,V_{i_{k}}\right\} $ if $d\left(V_{i_{1}},\dots,V_{i_{k}}\right)>2\varepsilon p$
and the induced $k$-partite $k$-graph between $V_{i_{1}},\dots,V_{i_{k}}$ is $\left(\varepsilon,p\right)$-regular.
\end{defn}

If the sparse regularity lemma is applied with small $\varepsilon$ and large
$t_{0}$, the cluster hypergraph approximately inherits minimum degree
properties from the original graph $G$, as follows.
\begin{lem}
\label{lem:transfer-cluster-graph-sparse}Fix positive integers $d<k$, $0<\delta<1$, some sufficiently small
$\varepsilon>0$ and some sufficiently large $t_{0}\in{\mathbb N}$, and let $G$
be an $n$-vertex $\left(o(1),p,1+o\left(1\right)\right)$-upper-uniform
$k$-graph (in particular, we assume that $n$ is sufficiently large). Let $G'\subseteq G$ be a spanning subgraph in which all but $o(n^d)$ of the $d$-sets of vertices have degree at least $\delta \binom{n-d}{k-d}p$. Let $\mathcal{R}$ be the
$t$-vertex cluster $k$-graph obtained by applying the sparse regularity
lemma to $G'$ with parameters $t_{0}$, $p$ and $\varepsilon$.
Then all but at most $\sqrt{\varepsilon}\binom{t}{d}$ of the $d$-sets of vertices of $\mathcal{R}$ have degree at least $\delta \binom{t-d}{k-d}-(4\sqrt \varepsilon+k/t_0) t^{k-d}$.
\end{lem}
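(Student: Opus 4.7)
The plan is to classify each $d$-set of clusters $D=\{V_{i_1},\dots,V_{i_d}\}$ as \emph{irregular-bad}, \emph{vertex-bad}, or \emph{nice}, and show separately that only few $D$ are non-nice, while every nice $D$ attains the required cluster-degree bound. Specifically, call $D$ irregular-bad if more than $2\sqrt{\varepsilon}\binom{t-d}{k-d}$ of its $(k-d)$-set extensions in $[t]\setminus\{i_1,\dots,i_d\}$ produce irregular $k$-tuples of clusters, and vertex-bad if more than $\sqrt{\varepsilon}\prod_{j=1}^d |V_{i_j}|$ of the $d$-tuples of vertices with one vertex in each $V_{i_j}$ fail the minimum-degree hypothesis in $G'$. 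Otherwise call $D$ nice.

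The bound on the number of non-nice $D$ comes from two averaging arguments. For irregular-bad sets: there are at most $\varepsilon\binom{t}{k}$ irregular $k$-tuples of clusters in total, each contributing to $\binom{k}{d}$ choices of $d$-subset, so by the identity $\binom{k}{d}\binom{t}{k}=\binom{t}{d}\binom{t-d}{k-d}$, averaging yields at most $\tfrac12\sqrt{\varepsilon}\binom{t}{d}$ irregular-bad sets. For vertex-bad sets: since there are only $o(n^d)$ low-degree $d$-sets in $G'$, and each vertex-bad $D$ accounts for at least $\sqrt{\varepsilon}(n/t)^d(1-o(1))$ of them, the number of vertex-bad $D$ is at most $o(t^d/\sqrt{\varepsilon})=o(t^d)$, negligible compared to $\tfrac12\sqrt{\varepsilon}\binom{t}{d}$ for $n$ sufficiently large. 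Altogether, the number of non-nice $D$ is at most $\sqrt{\varepsilon}\binom{t}{d}$.

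The heart of the argument is the nice case. Let $\mathcal T$ denote the set of $d$-tuples with one vertex in each $V_{i_j}$, and $\mathcal T_{\mathrm{good}}\subseteq \mathcal T$ the good ones (of which there are at least $(1-\sqrt{\varepsilon})\prod_j|V_{i_j}|$). Summing the minimum-degree hypothesis yields
\begin{equation*}
\sum_{T\in \mathcal T}\deg_{G'}(T)\ \ge\ (1-\sqrt{\varepsilon})\prod_{j=1}^d|V_{i_j}|\cdot \delta \binom{n-d}{k-d}p.
\end{equation*}
On the other hand $\sum_T \deg_{G'}(T)=\sum_{e\in G'}\prod_{j=1}^d|e\cap V_{i_j}|$, which I split according to whether $e$ is \emph{generic} (each $V_{i_j}$ meets $e$ in exactly one vertex and the remaining $k-d$ vertices of $e$ lie in $k-d$ distinct clusters outside $D$) or \emph{degenerate} (some cluster contains at least two vertices of $e$). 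The generic contribution equals $\sum_{\mathrm{ext}}E_{\mathrm{ext}}$, where $E_{\mathrm{ext}}$ counts edges spanning $D\cup\mathrm{ext}$ with one vertex per cluster; the degenerate contribution is bounded using the $(o(1),p,1+o(1))$-upper-uniformity of $G$, applied after fixing the collision cluster and pair of positions and splitting the collision cluster into halves, by a term of size $O(k/t_0)\cdot p(n/t)^d\binom{n-d}{k-d}$.

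Finally, split the extensions into \emph{low-density} (where $E_{\mathrm{ext}}\le 2\varepsilon p\prod_\ell|V_\ell|$, by definition of the cluster $k$-graph) and the remaining $|H(D)|$ extensions, for which upper-uniformity gives $E_{\mathrm{ext}}\le (1+o(1))p\prod_\ell|V_\ell|$. Substituting these bounds and dividing through by $p(n/t)^k$, the asymptotic $(n/t)^d\binom{n-d}{k-d}/(n/t)^k\to\binom{t-d}{k-d}$ (valid since $t$ is bounded and $n\to\infty$) yields $|H(D)|\ge\delta\binom{t-d}{k-d}-(3\sqrt{\varepsilon}+k/t_0)t^{k-d}$ after absorbing the $o(1)$ errors into the $\sqrt{\varepsilon}$ term. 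Subtracting the at most $2\sqrt{\varepsilon}\binom{t-d}{k-d}$ irregular extensions allowed by niceness gives the claimed bound $\deg_{\mathcal R}(D)\ge \delta\binom{t-d}{k-d}-(4\sqrt{\varepsilon}+k/t_0)t^{k-d}$. The main technical obstacle is controlling the degenerate contribution cleanly using only the upper-uniformity of $G$ (and not any property of $G'$); this is exactly what forces the $k/t_0$ term in the conclusion.
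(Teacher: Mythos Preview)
Your proof is correct and follows essentially the same double-counting strategy as the paper (which proves this lemma as the special case $h=1$, $\tau=2\varepsilon$ of the more general partitioned version, \cref{lem:transfer-cluster-graph-partition}). The only cosmetic difference is that you isolate a separate ``vertex-bad'' category for $d$-sets of clusters containing many low-degree vertex $d$-tuples, whereas the paper simply absorbs the $o(n^d)$ bad vertex $d$-sets directly into the lower bound on $|E|$ (which is legitimate because $t$ is bounded, so $(n/t)^d - o(n^d) = (1-o(1))(n/t)^d$).
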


\cref{lem:transfer-cluster-graph-sparse} can be proved with a standard counting
argument. It is a special case of \cref{lem:transfer-cluster-graph-partition}, which we will state and prove in the next subsection.

\subsection{Refining an existing partition}

We will need to apply the sparse regularity lemma to a $k$-graph
whose vertices are already partitioned into a few different parts
with different roles. It will be important that the regular partition
in \cref{lem:sparse-regularity} can be chosen to be consistent with
this existing partition.
\begin{lem}
\label{lem:respect-partition}Suppose that a $k$-graph $G$ has its
vertices partitioned into sets $P_{1},\dots,P_{h}$. In the $\left(\varepsilon,p\right)$-regular
partition guaranteed by \cref{lem:sparse-regularity}, we can assume
that all but at most $\varepsilon ht$ of the clusters $V_{i}$ are
contained in some $P_{j}$.
\end{lem}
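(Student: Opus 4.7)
The plan is to modify the proof of \cref{lem:sparse-regularity} so that the iterative refinement procedure is seeded with an initial partition already consistent with $P_{1},\dots,P_{h}$, rather than with the trivial partition $\{V(G)\}$. The standard proof of the (weak, sparse) hypergraph regularity lemma (see \cite{KR03,KNRS10}) proceeds by repeatedly subdividing the current partition via the defect form of the Cauchy--Schwarz inequality. Crucially, each refinement step only subdivides existing parts and never merges across them, so every part of the final equipartition is contained in exactly one part of the initial partition. Hence any property of the initial partition that is preserved under subdivision is automatically inherited by the final partition.

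First, I would construct the initial partition. Choose $t_{0}'=\max\{t_{0},\lceil 2/\varepsilon\rceil\}$ and set $q=\lfloor n/t_{0}'\rfloor$. For each $j\in\{1,\dots,h\}$, partition $P_{j}$ arbitrarily into $\lfloor|P_{j}|/q\rfloor$ ``clean'' blocks of size exactly $q$, together with a remainder $P_{j}^{*}$ of size less than $q$. Let $J=\bigcup_{j}P_{j}^{*}$; since $|J|<hq$, this set can be partitioned into at most $h$ ``dirty'' blocks of size $q$ (with at most one smaller leftover, which we absorb into one of the other dirty blocks). The result is a near-equipartition of $V(G)$ into roughly $t_{0}'$ parts, of which at most $h+1$ are dirty (i.e.\ contain vertices from more than one $P_{j}$), and all of which have size $q$ up to a small rounding term.

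Next, I would run the standard iterative refinement procedure on this initial partition to obtain an $(\varepsilon,p)$-regular equipartition $V_{1},\dots,V_{t}$ with $t_{0}\le t\le T$. Every final cluster $V_{i}$ is contained in a unique initial block, and a cluster can fail to be contained in some $P_{j}$ only if its parent block is dirty. Since each initial block of size $\approx q\approx n/t_{0}'$ is subdivided into at most $O(t/t_{0}')$ final clusters of size $\lfloor n/t\rfloor$ or $\lceil n/t\rceil$, the total number of bad final clusters is at most $(h+1)\cdot O(t/t_{0}')\le\varepsilon ht$, by our choice of $t_{0}'$.

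The main obstacle is not conceptual but a matter of careful bookkeeping: the definition of equipartition (parts differing in size by at most one) does not mesh cleanly with blocks of two slightly different natural sizes $q$ and $n/t$, so one must track rounding errors throughout the refinement. The standard remedy, which I would adopt here, is to tolerate a small exceptional set of $o(n)$ vertices during the refinement and redistribute it among the regular parts at the end, perturbing $\varepsilon$ and the other parameters by a negligible amount; this is handled in exactly the same way as the analogous issues in the original proofs of \cite{KR03,KNRS10}.
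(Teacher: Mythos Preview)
Your proposal is correct and takes essentially the same approach as the paper: both argue that one can seed the iterative refinement in the proof of the regularity lemma with an initial partition respecting $P_{1},\dots,P_{h}$, and that the refinement only subdivides parts. The paper gives only a one-paragraph sketch (``start with the partition $(P_{1},\ldots,P_{h})$ and proceed in the usual way'', deferring details to \cite[Lemma~4.6]{FK19}), whereas you spell out how to make the initial partition an equipartition and bound the number of dirty clusters; your extra detail is welcome, though the constant in your choice $t_{0}'=\lceil 2/\varepsilon\rceil$ is slightly too small to yield exactly $\varepsilon ht$ after the rounding losses you describe --- taking $t_{0}'$ a larger multiple of $1/\varepsilon$ fixes this.
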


For the reader who is familiar with the proof of the regularity lemma,
the proof of \cref{lem:respect-partition} is straightforward. Indeed,
in order to prove the regularity lemma, one starts with an arbitrary
partition and iteratively refines it. Therefore, one can start with the partition $(P_1,\ldots,P_h)$ and proceed in the usual way. For more details, see the reduction
in \cite[Lemma~4.6]{FK19}.

Next, we state a more technical version of \cref{lem:transfer-cluster-graph-sparse}, deducing degree conditions in the cluster graph from degree conditions between the $P_{i}$. To state this we first need to generalise the definition of a cluster graph.
\begin{defn}
Consider a $k$-graph $G$, and let $P_{1},\dots,P_{h}$ be disjoint sets of vertices. Also, consider a $\left(\varepsilon,p\right)$-regular partition $V_{1},\dots,V_{t}$ of the vertices of $G$. Then the \emph{partitioned} cluster graph $\mathcal{R}$
with \emph{threshold} $\tau$ is the $k$-graph defined as follows.
The vertices of $\mathcal{R}$ are the clusters $V_{i}$ which are
completely contained in some $P_{j}$, with an edge $\left\{ V_{i_{1}},\dots,V_{i_{k}}\right\} $
if $d\left(V_{i_{1}},\dots,V_{i_{k}}\right)>\tau p$ and the induced
$k$-partite $k$-graph between $V_{i_{1}},\dots,V_{i_{k}}$ is $\left(\varepsilon,p\right)$-regular.
\end{defn}

\begin{lem}
\label{lem:transfer-cluster-graph-partition}Let $d<k$ be positive integers,
let $\varepsilon>0$ be sufficiently small, and let $t_{0}\in\mathbb{N}$
be sufficiently large. Let $G$ be an $n$-vertex $\left(o\left(1\right),p,1+o\left(1\right)\right)$-upper-uniform
$k$-graph with a partition $P_{1},\dots,P_{h}$ of its vertices into
parts of sizes $n_{1},\dots,n_{h}$, respectively. Let $G'\subseteq G$ be a spanning
subgraph and let $\mathcal{R}$ be the partitioned cluster $k$-graph with threshold
$\tau$ obtained by applying \cref{lem:sparse-regularity,lem:respect-partition}
to $G'$ with parameters $t_{0}$, $p$ and $\varepsilon$.

For every
$1\leq i\leq h$, let $\mathcal{P}_{i}$ be the set of clusters contained
in $P_{i}$, and let $t_{i}=|\mathcal P_i|$. Also, for every $J\subseteq \{1,\dots,h\}$, write $P_J:=\bigcup_{j\in J} P_j$, $n_J=|P_J|$, $\mathcal{P}_{J}=\bigcup_{j\in J} \mathcal P_j$ and $t_J=|\mathcal{P}_{J}|$. Then the following properties hold.

\begin{enumerate}
\item[(1)] Each $t_{i}\ge\left(n_{i}/n\right)t-\varepsilon ht$.
\item[(2)]{Consider some $i\le h$ and some $J\subseteq \{1,\dots,h\}$, and suppose that all but $o(n^d)$ of the $d$-sets of vertices $X\subset P_{i}$ satisfy
\[
\deg_{P_{J}}\left(X\right)\ge\delta p\binom{n_{J}-d}{k-d}.
\]
Then, all but at most $\sqrt{\varepsilon}\binom{t}{d}$
of the $d$-sets of clusters $\mathcal{X}\subset\mathcal{P}_{i}$
have 
\[
\deg_{\mathcal{P}_{J}}\left(\mathcal{X}\right)\ge\delta \binom{t_{J}-d}{k-d}-\left(\tau+\varepsilon h+\sqrt{\varepsilon}+k/t_{0}\right)t^{k-d}
\]
in the cluster graph $\mathcal{R}$.
}
\end{enumerate}
\end{lem}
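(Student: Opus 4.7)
For Part (1), I would use a direct vertex count. Since the $(\varepsilon,p)$-regular partition is an equipartition, every cluster has size in $\{\lfloor n/t\rfloor,\lceil n/t\rceil\}$, and by \cref{lem:respect-partition} all but $\varepsilon ht$ of the clusters are contained in some $P_j$. Every vertex of $P_i$ therefore lies either in a cluster of $\mathcal{P}_i$ or in one of these $\le\varepsilon ht$ ``stray'' clusters, giving $n_i\le(t_i+\varepsilon ht)\lceil n/t\rceil$, which rearranges to the stated bound $t_i\ge(n_i/n)t-\varepsilon ht$ for $n$ large enough to absorb an $O(t/n)$ correction.

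For Part (2) my plan is a double-counting argument carried out on ``good'' $d$-tuples of clusters. Set $L:=\lceil n/t\rceil$, so every cluster has size in $\{L-1,L\}$, and call a $d$-set $\mathcal{X}=\{V_{i_1},\ldots,V_{i_d}\}\subset\mathcal{P}_i$ \emph{tuple-bad} if at least $\sqrt\varepsilon L^d$ of the ordered tuples in $V_{i_1}\times\cdots\times V_{i_d}$ yield an exceptional vertex $d$-set (one of the $o(n^d)$ failing the hypothesised lower bound on $\deg_{P_J}$), and \emph{irregular-bad} if at least $\sqrt\varepsilon\binom{t-d}{k-d}$ of the $(k-d)$-sets of clusters $\mathcal{Y}$ make $\mathcal{X}\cup\mathcal{Y}$ an irregular $k$-tuple. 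Standard averaging bounds each class by $\sqrt\varepsilon\binom{t}{d}/2$ for $n$ large: for the irregular case via the identity $\binom{t}{k}\binom{k}{d}=\binom{t}{d}\binom{t-d}{k-d}$ applied to the $\le\varepsilon\binom{t}{k}$ irregular $k$-tuples, and for the tuple case via $o(n^d)=o(t^d L^d)$. It therefore suffices to prove the inequality for each good $\mathcal{X}$.

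For such an $\mathcal{X}$, let $E_1$ count the edges of $G'$ with one vertex in each $V_{i_j}$ and the remaining $k-d$ vertices in $P_J$. Summing the degree hypothesis over the $\ge(1-\sqrt\varepsilon)L^d$ non-exceptional ordered $d$-tuples yields $E_1\ge(1-\sqrt\varepsilon)L^d\,\delta p\binom{n_J-d}{k-d}$. I would then partition the edges of $E_1$ by the placement of their remaining vertices into four types: (i) distinct clusters of $\mathcal{P}_J$ that form with $\mathcal{X}$ an edge of $\mathcal{R}$, contributing at most $\deg_{\mathcal{P}_J}(\mathcal{X})(1+o(1))pL^k$ via upper-uniformity; (ii) distinct clusters forming a regular $k$-tuple of density $\le\tau p$, contributing at most $\tau p L^k\cdot t^{k-d}$ in total; (iii) distinct clusters forming an irregular $k$-tuple, contributing at most $\sqrt\varepsilon p L^k\cdot t^{k-d}$ since $\mathcal{X}$ is not irregular-bad; and (iv) ``off-grid'' edges -- with two remaining vertices in the same cluster, or some remaining vertex in the leftover set $W:=P_J\setminus\bigcup_{V_r\in\mathcal{P}_J}V_r$ (of size $\le\varepsilon hn$) -- contributing at most $O(\varepsilon h+k/t_0)p L^k\cdot t^{k-d}$ via upper-uniformity applied to coarser partitions. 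Equating the two bounds on $E_1$, dividing through by $pL^k$, and converting $\binom{n_J-d}{k-d}/L^{k-d}$ to $\binom{t_J-d}{k-d}$ (which costs an extra $O((\varepsilon h+k/t_0)t^{k-d})$ coming from $n_J/L=t_J+O(\varepsilon ht)$ and the equipartition tolerance $L\ne n/t$) delivers the stated inequality after absorbing constants.

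The main obstacle I anticipate is a clean bookkeeping of the off-grid contribution in (iv): the leftover set $W$ may have size below the threshold at which upper-uniformity applies directly, so my intended fix is to invoke upper-uniformity on $P_J$ itself and subtract the portion already counted in (i)-(iii), and for repeated-cluster edges to merge two coordinates and apply upper-uniformity to the resulting $(k-1)$-partite configuration. Matching each of the four error sources $\tau,\sqrt\varepsilon,\varepsilon h,k/t_0$ to their respective origins (low-density non-edges; irregular non-edges; leftover/repeated edges together with stray-cluster vertices in $P_i$; and the cluster-count-to-vertex-count conversion) is the point that requires the most care, though no individual step is deep.
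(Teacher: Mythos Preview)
Your proposal is correct and follows essentially the same double-counting strategy as the paper: isolate a small family of ``irregular-bad'' $d$-sets of clusters, and for every remaining $\mathcal{X}$ bound the number of edges of $G'$ with one vertex in each cluster of $\mathcal{X}$ and the other $k-d$ vertices in $P_J$ from below via the degree hypothesis and from above via the decomposition into $\mathcal{R}$-edges, low-density regular tuples, irregular tuples, and off-grid edges.

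Two minor points where you diverge from the paper. First, your ``tuple-bad'' category is unnecessary: since $t\le T$ is bounded, the total number $o(n^d)$ of exceptional vertex $d$-sets is already $o((n/t)^d)$, so \emph{every} $\mathcal{X}$ has at most $o(L^d)$ exceptional transversals and the lower bound $E_1\ge(1-o(1))L^d\delta p\binom{n_J-d}{k-d}$ holds uniformly (this is how the paper handles it, and it avoids the bookkeeping issue that your averaging for irregular-bad actually gives $\sqrt\varepsilon\binom{t}{d}$, not $\sqrt\varepsilon\binom{t}{d}/2$). Second, your anticipated obstacle in (iv) is not really an obstacle: the leftover set has size at most $\varepsilon h n$, and every cluster has size $n/t\ge n/T$, both of which are $\Omega(n)$ and hence well above the $o(1)\cdot n$ threshold for $(o(1),p,1+o(1))$-upper-uniformity; the paper simply applies upper-uniformity directly to bound $|E_Z|$ and $|E_{\mathrm{mul}}|$ without further manoeuvres.
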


Note that \cref{lem:transfer-cluster-graph-sparse} is actually a special
case of \cref{lem:transfer-cluster-graph-partition} (taking $h=1$ and threshold $\tau = 2\varepsilon$).

\begin{proof}
The clusters in $\mathcal{P}_{i}$ comprise at most $t_{i}\left(n/t\right)$
vertices, so recalling the statement of \cref{lem:sparse-regularity,lem:respect-partition},
we have $\left|P_{i}\right|=n_{i}\le t_{i}\left(n/t\right)+\varepsilon ht\left(n/t\right)$.
It follows that $t_{i}\ge\left(n_{i}/n\right)t-\varepsilon ht$, proving (1).

Now we prove (2). Let $\mathcal W^{(d)}$ be the collection of all $d$-sets of clusters $\mathcal W=\{W_1,\dots,W_d\}$ that are contained in more than $\sqrt{\varepsilon}\binom{t-d}{k-d}$ irregular (that is, non-$\left(\varepsilon,p\right)$-regular) $k$-sets $\left\{ W_{1},\dots,W_{d},V_{i_1},\dots,V_{i_{k-d}}\right\} $. Then
\[
\left|\mathcal{W}^{\left(d\right)}\right|\le\frac{\varepsilon\binom{t}{k}}{\sqrt{\varepsilon}\binom{t-d}{k-d}}\le\sqrt{\varepsilon}\binom{t}{d}.
\]
Now, consider any $i\le h$ and $J\subseteq \{1,\dots,h\}$, and suppose that the condition in (2) holds. Consider any $d$-set $\mathcal X$ of clusters in $\binom{\mathcal{P}_{i}}{d}\backslash\mathcal{W}^{\left(d\right)}$. We wish to estimate $\deg_{\mathcal{P}_{J}}\left(\mathcal X\right)$.

Let $E$ be the set of edges of $G'$ which have a vertex in each
of the $d$ clusters in $\mathcal X$, and $k-d$ vertices in
$P_{J}$. In order to estimate $\deg_{\mathcal{P}_{J}}\left(\mathcal X\right)$,
we count $|E|$ in two different ways. First, we break $E$ into subsets
depending on how they relate to the cluster graph $\mathcal{R}$.

Let $E_{\mathrm{\mathcal{R}}}$ be the set of $e\in E$ which ``arise
from $\mathcal{R}$'' in the sense that the vertices of $e$ come
from distinct clusters that form an edge in $\mathcal{R}$ containing
$\mathcal X$. By upper-uniformity, we have 
\[
|E_{\mathcal{R}}|\le(1+o(1))\deg_{\mathcal{P}_{J}}\left(\mathcal X\right)p(n/t)^{k}.
\]
Let $E_{\mathrm{irr}}$ be the subset of edges in $E$ arising from
irregular $k$-sets. By the choice of $\mathcal X\in\binom{\mathcal{P}_{i}}{d}\backslash\mathcal{W}^{\left(d\right)}$,
we have 
\[
|E_{\mathrm{irr}}|\le\sqrt{\varepsilon}\binom{t-d}{k-d}(1+o(1))p(n/t)^{k}.
\]
Let $E_{\tau}$ be the subset of edges in $E$ arising from $k$-sets
of clusters (containing $\mathcal X$) that are regular but whose
density is less than $\tau$ (and therefore do not appear in the cluster
graph). We have 
\[
|E_{\tau}|\le\tau\binom{t-d}{k-d}(1+o(1))p(n/t)^{k}.
\]
Let $E_{\mathrm{mul}}$ be the set of edges in $E$ which have multiple
vertices in the same cluster of $\mathcal{P}_{J}$, and let $E_{Z}$
be the set of edges in $E$ which involve a vertex not in a cluster
in $\mathcal{P}_{J}$ (because its cluster was not completely contained in any $P_j$). Simple double-counting arguments give
\[
|E_{\mathrm{mul}}|\leq\frac{\binom{k-d}{2}t^{k-d-1}}{\left(k-d\right)!}\left(1+o\left(1\right)\right)p\left(n/t\right)^{k},\quad|E_{Z}|\leq\frac{\left(k-d\right)(\varepsilon ht)t^{k-d-1}}{\left(k-d\right)!}\left(1+o\left(1\right)\right)p\left(n/t\right)^{k}.
\]
All in all, we obtain 
\begin{align*}
\left|E\right| & \leq|E_{\mathcal{R}}|+|E_{\mathrm{irr}}|+|E_{\tau}|+|E_{\mathrm{mul}}|+\left|E_{Z}\right|\\
 & \le\left(1+o\left(1\right)\right)p\left(\frac{n}{t}\right)^{k}\left(\deg_{\mathcal{P}_{J}}\left(\mathcal X\right)+t^{k-d}\left(\sqrt{\varepsilon}+\tau+(k-d)/t_{0}+\varepsilon h\right)\right).
\end{align*}
On the other hand, by the degree assumption in $G'$ and the fact
that $n_{J}\ge t_{J}\left(n/t\right)$, we have
\[
|E|\ge \left(\left(\frac{n}{t}\right)^d-o(n^d)\right)\delta p\binom{n_{J}-d}{k-d}\ge (1-o(1))p\delta \binom{t_{J}-d}{k-d}\left(\frac{n}{t}\right)^{k}.
\]
It follows that 
\[
\deg_{\mathcal{P}_{J}}\left(\mathcal X\right)\ge\delta \binom{t_{J}-d}{k-d}-\left(\sqrt{\varepsilon}+\tau+k/t_{0}+\varepsilon h\right)t^{k-d}.\tag*{\qedhere}
\]
\end{proof}

\subsection{A sparse embedding lemma}

One of the most powerful aspects of the sparse regularity method is
that, for a subgraph $G'$ of a typical outcome of a random graph,
if we find a substructure in the cluster graph (which is usually dense,
therefore comparatively easy to analyse), then a corresponding structure
must also exist in the original graph $G'$. For graphs, this was famously conjectured
to be true by Kohayakawa, \L uczak and R\"odl~\cite{KLR97}, and
was proved by Conlon, Gowers, Samotij and Schacht~\cite{CGSS14}.
We will need a generalisation to hypergraphs, which was already observed to hold in \cite{CGSS14} and appears explicitly as \cite[Theorem~4.12]{FK19}.
To state it we will need some definitions.
\begin{defn}
Consider a $k$-graph $H$ with vertex set $\left\{ 1,\dots,r\right\} $
and let $\mathcal{G}\left(H,n,m,p,\varepsilon\right)$ be the collection
of all $k$-graphs $G$ obtained in the following way. The vertex
set of $G$ is a disjoint union $V_{1}\cup\dots\cup V_{r}$ of sets
of size $n$. For each edge $\left\{ i_{1},\dots,i_{k}\right\} \in E\left(H\right)$,
we add to $G$ an $\left(\varepsilon,p\right)$-regular $k$-graph
with $m$ edges between $V_{i_{1}},\dots,V_{i_{k}}$. These are the
only edges of $G$.
\end{defn}

\begin{defn}
For $G\in\mathcal{G}\left(H,n,m,p,\varepsilon\right)$, let $\#_{H}(G)$
be the number of ``canonical copies'' of $H$ in $G$, meaning that
the copy of the vertex $i$ must come from $V_{i}$.
\end{defn}

\begin{defn}
\label{def:3-density}The $k$-\emph{density} $m_{k}\left(H\right)$
of a $k$-graph $H$ is defined as 
\[
m_{k}\left(H\right)=\max\left\{ \frac{e\left(H'\right)-1}{v\left(H'\right)-k}:H'\subseteq H\text{ with }v\left(H'\right)>k\right\}.
\]
\end{defn}

Now, our sparse embedding lemma is as follows.
\begin{thm}
\label{thm:KLR}For every linear $k$-graph $H$ and every $\tau>0$,
there exist $\varepsilon,\zeta>0$ with the following property. For
every $\kappa>0$, there is $C>0$ such that if $p\ge CN^{-1/m_{k}\left(H\right)}$,
then with probability $1-e^{-\Omega\left(N^{k}p\right)}$ the following
holds in $G\in{\operatorname H}^{k}\left(N,p\right)$. For every $n\ge\kappa N$,
$m\ge\tau pn^{k}$ and every subgraph $G'$ of $G$ in $\mathcal{G}\left(H,n,m,p,\varepsilon\right)$,
we have $\#_{H}(G')>\zeta p^{e(H)} n^{v(H)}$.
\end{thm}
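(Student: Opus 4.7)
The plan is to adapt the hypergraph container approach of Conlon, Gowers, Samotij and Schacht \cite{CGSS14}, essentially as in our earlier \cite[Theorem~4.12]{FK19}. The argument reformulates the conclusion as a sparse counting statement for an auxiliary hypergraph and then applies the container method.

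For any fixed choice of disjoint sets $V_{1}, \ldots, V_{r}$ of size $n$, I would introduce the auxiliary $e(H)$-uniform hypergraph $\mathcal{A}$ whose vertex set consists of all potential $k$-edges $\{u_{i_{1}}, \ldots, u_{i_{k}}\}$ with $u_{i_{j}} \in V_{i_{j}}$, indexed by edges $\{i_{1}, \ldots, i_{k}\} \in E(H)$, and whose hyperedges are the canonical copies of $H$. Then a ``bad'' subgraph $G' \in \mathcal{G}(H, n, m, p, \varepsilon)$ with $\#_H(G') < \zeta p^{e(H)} n^{v(H)}$ corresponds precisely to $E(G')$ being a large subset of $V(\mathcal{A})$ that spans few hyperedges of $\mathcal{A}$. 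The next step is to establish \emph{balanced supersaturation} for $\mathcal{A}$: the dense counting lemma together with $(\varepsilon, p)$-regularity should force any such $E(G')$ to span at least $\zeta p^{e(H)} n^{v(H)}$ hyperedges, distributed so that no $j$-subset of edges is contained in too many hyperedges. The definition of $m_{k}(H)$ is tailored precisely to make this co-degree bound quantitatively consistent with the sparsity $p \sim N^{-1/m_{k}(H)}$, and the linearity of $H$ is what rules out overlapping copies of $H$ that would otherwise obstruct the bound.

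Finally, I would feed the balanced supersaturation estimate into the hypergraph container theorem of Saxton--Thomason to obtain at most $\exp(o(N^{k} p))$ containers covering every ``bad'' edge set, each of size far below the expected number of edges of $G \sim {\operatorname H}^{k}(N, p)$ in the relevant blocks. A Chernoff-type bound then shows that with probability $1 - e^{-\Omega(N^{k} p)}$ no admissible bad $G'$ fits inside any single container, which completes the proof. The main obstacle, as is standard in this line of work, is the balanced supersaturation step: the careful co-degree control is delicate in the sparse hypergraph setting, and is exactly where the linearity of $H$ and the precise value of $m_{k}(H)$ are used.
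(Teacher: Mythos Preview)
The paper gives no proof of this theorem; it simply cites \cite{CGSS14} and \cite[Appendix~A]{FK19}. Your plan to follow those sources via the container method is therefore exactly in line with what the paper does, and the overall architecture you describe (auxiliary hypergraph $\mathcal A$, supersaturation, containers, Chernoff plus union bound) is the right one.

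That said, your description of the supersaturation step is muddled in a way that suggests a genuine confusion. You write that ``the dense counting lemma together with $(\varepsilon,p)$-regularity should force any such $E(G')$ to span at least $\zeta p^{e(H)} n^{v(H)}$ hyperedges.'' But this is precisely the statement you are trying to prove, and moreover $E(G')$ is a \emph{sparse} subset of $V(\mathcal A)$ (only about a $p$-fraction of it), so the dense counting lemma does not apply to $E(G')$ at all. Balanced supersaturation is a purely deterministic statement about \emph{dense} subsets $W\subseteq V(\mathcal A)$: it says that any such $W$ spans many copies of $H$, well spread out in the sense that the codegrees are controlled (this is where $m_k(H)$ enters). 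It is the \emph{input} to the container theorem, not a conclusion about the sparse set $E(G')$. The container theorem then yields a family of at most $\exp(o(N^k p))$ containers covering all sets with few copies, and each container has a structural defect in the dense scale: not merely ``size far below the expected number of edges,'' but either density below $\tau$ in some block \emph{or} an irregularity witness $(X_1,\dots,X_k)$ of linear-sized subsets in some block. It is this defect, transferred to $G'\subseteq G\cap C$ by a Chernoff bound on the random edges of $G$, that contradicts $G'\in\mathcal G(H,n,m,p,\varepsilon)$. Your phrase about container size only handles the low-density case; the irregularity case is where the $(\varepsilon,p)$-regularity hypothesis is actually used, and your sketch glosses over it.
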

Note that the condition $m\ge \tau pn^k$ is precisely the condition that the corresponding $G'\in \mathcal(H,n,m,p,\varepsilon)$ has density at least $\tau$.

\cref{thm:KLR} may be proved using the methods in \cite{CGSS14}. The necessary adaptations for the hypergraph setting are described in detail in \cite[Appendix~A]{FK19}.

\section{Concentration lemmas}\label{sec:concentration}

In this section we collect a number of basic facts about concentration of the edge distribution in random hypergraphs and random subsets of hypergraphs. First, we show that the upper-uniformity condition in the sparse regularity lemma is almost always satisfied in random hypergraphs.
\begin{lem}
Fix $k\in \mathbb N$, $D>1$ and $0<\eta<1$, and consider $G\in{\operatorname H}^{k}\left(n,p\right)$. Then $G$ is 
$\left(\eta,p,D\right)$-upper-uniform with probability at least $1-2^{k n} e^{-\Omega(n^kp)}$.
\end{lem}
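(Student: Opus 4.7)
The plan is a routine combination of a Chernoff-type concentration bound for each candidate tuple $(X_1,\ldots,X_k)$ with a union bound over all such tuples. The key observation is that, once disjoint subsets $X_1,\ldots,X_k \subseteq V(G)$ are fixed, the random variable $e(X_1,\ldots,X_k)$ is a sum of $N := |X_1|\cdots|X_k|$ independent $\operatorname{Bernoulli}(p)$ variables, one for each potential edge having exactly one vertex in each $X_i$, with mean $pN$.

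First I would fix an arbitrary disjoint tuple $(X_1,\ldots,X_k)$ with $|X_i|\ge \eta n$ for each $i$, so that $N \ge (\eta n)^k$. A standard multiplicative Chernoff bound (for instance $\Pr(X\ge(1+\delta)\mu)\le \exp(-\delta^2\mu/(2+\delta))$) applied with $\delta=D-1>0$ and $\mu=pN$ gives
\[
\Pr\!\left(e(X_1,\ldots,X_k) > D p N \right) \;\le\; \exp\!\left(-\tfrac{(D-1)^2}{D+1}\, pN\right) \;\le\; \exp\!\left(-c\, n^{k} p\right),
\]
where $c=c(D,\eta,k)>0$ depends only on the fixed parameters $D$, $\eta$ and $k$.

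Next I would count the disjoint tuples $(X_1,\ldots,X_k)$ of subsets of $V(G)$: each vertex can be assigned either to exactly one $X_i$ or to none, yielding at most $(k+1)^n$ tuples, and trivially $(k+1)^n \le 2^{kn}$ for every integer $k\ge 1$ (since $k+1\le 2^k$). A union bound over all such disjoint tuples, including those that fail the $|X_i|\ge \eta n$ constraint (which we may safely ignore, since the statement of upper-uniformity only quantifies over tuples meeting this constraint), then yields the desired bound $2^{kn} e^{-\Omega(n^k p)}$.

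The main obstacle here is essentially nonexistent; the entire argument is bookkeeping. The only points that require a moment of care are (i) that the Bernoulli variables summing to $e(X_1,\ldots,X_k)$ are genuinely independent because each of the $N$ potential transversal edges is a distinct $k$-subset of $V(G)$, and (ii) that the implicit constant in $\Omega(\cdot)$ is permitted to depend on $D$, $\eta$, and $k$, all of which are fixed before $n$ grows.
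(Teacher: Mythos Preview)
Your proof is correct and follows essentially the same approach as the paper: apply a Chernoff bound to $e(X_1,\dots,X_k)$ for a fixed tuple with each $|X_i|\ge\eta n$, then take a union bound over all tuples. Your write-up is in fact slightly more explicit than the paper's, since you spell out the count $(k+1)^n\le 2^{kn}$ where the paper simply says ``take the union bound over all choices of $X_1,\dots,X_k$''.
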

\begin{proof}
Consider disjoint vertex sets $X_1,\dots,X_k$ each having size at least $\eta n$. Then $\E e(X_1,\dots,X_k)=p|X_1|\dots|X_k|=\Omega(n^k p)$, so by the Chernoff bound,
$$\Pr(e(X_1,\dots,X_k)\ge D p|X_1|\dots|X_k|)= \exp\left(-\Omega\left((D-1)^2 n^{k}p\right)\right)=e^{-\Omega(n^{k}p)}.$$
We can then take the union bound over all choices of $X_1,\dots,X_k$.
\end{proof}
The following corollary is immediate, and will be more convenient in practice.
\begin{cor}
\label{lem:random-upper-uniform}
Fix $k\in \mathbb N$, let $p=\omega(n^{1-k}\log n)$, and consider $G\in{\operatorname H}^{k}\left(n,p\right)$. Then $G$ is 
$\left(o(1),p,1+o(1)\right)$-upper-uniform with probability at least $1-e^{-\omega(n\log n)}$.
\end{cor}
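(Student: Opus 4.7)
My plan is to apply the previous lemma with parameters $\eta$ and $D$ that tend to $0$ and $1$ respectively (rather than being fixed), at rates slow enough that the failure probability is still $e^{-\omega(n \log n)}$. The first step is to track the constants in the proof of the previous lemma: the Chernoff estimate used there actually yields, for any disjoint $X_1,\dots,X_k$ of sizes at least $\eta n$,
\[
\Pr\bigl(e(X_1,\dots,X_k) \ge D p |X_1|\dots|X_k|\bigr) \le \exp\bigl(-c(D-1)^2 \eta^k n^k p\bigr)
\]
for some absolute constant $c = c(k) > 0$. Union-bounding over the at most $2^{kn}$ disjoint tuples gives a total failure probability of at most $2^{kn}\exp\bigl(-c(D-1)^2 \eta^k n^k p\bigr)$, and this bound remains valid when $\eta$ and $D$ depend on $n$, since the Chernoff and union-bound steps are uniform in those parameters.

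The hypothesis $p = \omega(n^{1-k}\log n)$ rewrites as $n^k p / (n \log n) \to \infty$, so I can select sequences $\eta_n, \varepsilon_n \to 0$ decaying slowly enough that $\varepsilon_n^2 \eta_n^k \cdot n^k p$ is still $\omega(n \log n)$. Setting $D_n = 1 + \varepsilon_n$, the failure probability from the previous display becomes at most
\[
2^{kn}\exp\bigl(-c \varepsilon_n^2 \eta_n^k n^k p\bigr) = \exp\bigl(O(n) - \omega(n \log n)\bigr) = e^{-\omega(n \log n)},
\]
using that the $2^{kn} = e^{O(n)}$ factor is dwarfed by the Chernoff exponent. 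Since $\eta_n \to 0$ and $D_n \to 1$, this is exactly the statement that $G$ is $(o(1), p, 1+o(1))$-upper-uniform with probability at least $1 - e^{-\omega(n \log n)}$.

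There is no genuine obstacle here; the corollary is essentially a ``choose parameters slowly'' exercise on top of the previous lemma. The only point worth being careful about is confirming that the implicit constant in the $-\Omega(n^k p)$ exponent of the previous lemma really does have the form $c(D-1)^2 \eta^k$ for some $c(k) > 0$, which is transparent from the Chernoff step sketched above.
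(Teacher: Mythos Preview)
Your proposal is correct and is exactly the argument the paper has in mind: the paper simply declares the corollary ``immediate'' from the preceding lemma, and what you have written is the natural way to unpack that, tracking the $(D-1)^2\eta^k$ dependence in the Chernoff exponent and letting $\eta_n\to 0$, $D_n\to 1$ slowly relative to $n^kp/(n\log n)\to\infty$.
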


Next, the following lemma shows that in random hypergraphs all vertices have about the expected degree into any large enough set.
\begin{lem}
\label{lem:delete-vertices-degree-ok}Fix $\lambda>0$ and $k\geq 3$. Then there is $C>0$ such that if $p\ge C n^{2-k}$, a.a.s.\ $G\sim{\operatorname H}^{k}\left(n,p\right)$ has the following property. For every vertex $w$ and every set $X\subseteq V(G)$ of at most $\lambda n$ vertices, there are at most $2\left(\lambda n\right)p\binom{n-2}{k-2}$ edges in $G$ containing $w$ and a vertex of $X$.
\end{lem}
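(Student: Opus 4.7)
The plan is a straightforward union bound combined with the Chernoff inequality. First, I observe that the quantity in question is monotone in $X$: if $X\subseteq X'$, then every edge containing $w$ and a vertex of $X$ also contains $w$ and a vertex of $X'$. Thus it suffices to prove the bound only for sets $X$ of size exactly $\lfloor\lambda n\rfloor$, since any strictly smaller set can be extended to such an $X$ without decreasing the count.

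Next, I fix such a pair $(w,X)$. The number of $k$-element subsets of $V(G)$ containing $w$ and at least one vertex of $X$ is at most $|X|\binom{n-2}{k-2}\le \lambda n\binom{n-2}{k-2}$, obtained by first choosing a vertex of $X$ to include and then choosing $k-2$ additional vertices from $V(G)\setminus\{w\}$. Consequently, the random quantity $N_{w,X}$ (the number of such edges actually present in $G$) is stochastically dominated by a binomial random variable with mean at most $M := \lambda n p\binom{n-2}{k-2}$. Under the hypothesis $p\ge Cn^{2-k}$ and using $k\ge 3$, one easily checks that $M\ge c_k \lambda C n$ for a constant $c_k>0$ depending only on $k$. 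A standard Chernoff-type bound for the upper tail of a binomial (for instance, Bernstein's inequality applied with deviation at least $2M-\mu_{w,X}\ge M$) then yields
\[
\Pr\bigl(N_{w,X}>2M\bigr)\le \exp\bigl(-c'_k \lambda C n\bigr)
\]
for some $c'_k>0$ depending only on $k$.

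The proof is concluded by a union bound over the $n$ choices of $w$ and the at most $\binom{n}{\lfloor\lambda n\rfloor}\le 2^n$ choices of $X$, yielding total failure probability at most $n\cdot 2^n\cdot \exp(-c'_k\lambda Cn)=o(1)$, provided that $C$ is chosen sufficiently large in terms of $\lambda$ and $k$ so that $c'_k\lambda C>\log 2$.

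The main thing to verify is that the exponential saving from Chernoff beats the $2^n$ blow-up arising from the union bound over all subsets $X$ of size $\lfloor\lambda n\rfloor$. This is precisely the reason why the hypothesis is of the form $p\ge Cn^{2-k}$ with $C$ a large constant (rather than merely $p=\omega(n^{2-k})$): this choice ensures that the relevant Chernoff mean $M$ grows \emph{linearly} in $n$, so that the exponential in the Chernoff tail can absorb the $2^n$ factor from the union bound. Beyond this bookkeeping observation, no real obstacle arises.
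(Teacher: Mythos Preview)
Your proof is correct and follows essentially the same approach as the paper: bound the expected number of such edges by $\lambda n\,p\binom{n-2}{k-2}$, apply a Chernoff bound, and take a union bound over at most $2^n$ choices of $X$ (and $n$ choices of $w$). The paper's proof is just a terser version of what you wrote; your added observations about monotonicity in $X$ and stochastic domination by a binomial are reasonable clarifications but not new ideas.
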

\begin{proof}
For any vertex $w$ and any set $X$ of at most $\lambda n$ vertices, the expected number of edges containing both $w$ and
a vertex of $X$ is at most $\left(\lambda n\right)p\binom{n-2}{k-2}$.
Then the desired result follows from the Chernoff bound and the union
bound over at most $2^n$ choices of $X$.
\end{proof}

We also need a slightly more sophisticated version of \cref{lem:delete-vertices-degree-ok} that works for general $d$-degrees and has quite a weak assumption on $p$, but only gives a conclusion for almost all $d$-sets of vertices. For vertex sets $S,X$ in a $k$-graph $G$, let $Z_G(S,X)$ be the number of edges $e\in E(G)$ that contain $S$ and have non-empty intersection with $X$.
\begin{lem} \label{lem:delete-vertices-degree-almost-ok}Fix $\lambda>0$ and positive integers $d< k$. For $p\ge \omega(n^{d-k})$,
a.a.s.\ $G\sim{\operatorname H}^{k}\left(n,p\right)$ has the following property.
For every subset $X\subseteq V(G)$ of size $|X|\leq \lambda n$, there are at most $o(n^d)$ different $d$-sets $S\subseteq V(G)$ such that $Z_G(S,X)>2\binom k d(\lambda n)p\binom{n-d-1}{k-d-1}$.
\end{lem}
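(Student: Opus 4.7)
The plan is to combine a Chernoff tail bound on each individual $Z_G(S,X)$ with a deterministic double-counting identity, handling the union bound over $X$ by splitting into two regimes based on $|X|$. Let $B_X$ denote the set of $d$-sets $S$ with $Z_G(S,X)>T$, where $T:=2\binom{k}{d}(\lambda n)p\binom{n-d-1}{k-d-1}$.

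For any fixed $X\subseteq V(G)$ of size $x\le\lambda n$ and any $d$-set $S$, $Z_G(S,X)$ is a binomial random variable with mean at most $|X\setminus S|\,p\binom{n-d-1}{k-d-1}\le xp\binom{n-d-1}{k-d-1}$. The threshold $T$ therefore exceeds this mean by a factor of at least $2\binom{k}{d}\lambda n/x\ge 2\binom{k}{d}$, so a standard Chernoff bound gives $\Pr(Z_G(S,X)>T)\le (e/(2\binom{k}{d}))^{T}=o(1)$, using that $T=\Theta(\lambda pn^{k-d})=\omega(1)$ under the hypothesis $p=\omega(n^{d-k})$. By linearity, $\E|B_X|=o(n^d)$ for every fixed $X$.

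To obtain the a.a.s.\ bound uniformly over $X$, I would split into regimes based on $|X|$. For $|X|=o(n)$, the deterministic identity $\sum_SZ_G(S,X)=\binom{k}{d}E_X$ (with $E_X$ counting edges of $G$ meeting $X$), combined with the bound $E_X\le(1+o(1))|X|\binom{n-1}{k-1}p$ coming from Chernoff concentration of $E_X$, yields $|B_X|\le\binom{k}{d}E_X/T=O((|X|/n)\,n^d)=o(n^d)$. For $|X|=\Theta(n)$, the same identity gives $|B_X|\le\binom{k}{d}E_X/T$, and a Chernoff bound for the binomial $E_X$, whose mean is $\omega(n)$ under the hypothesis so that a union bound over the $\le2^n$ sets $X$ is affordable, gives $E_X\le(1+o(1))\E E_X$ uniformly, producing a uniform bound $|B_X|=O(n^d)$.

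The main obstacle is closing the remaining gap from $O(n^d)$ to $o(n^d)$ in the large-$X$ regime. The per-$d$-set Chernoff probability $(e/(2\binom{k}{d}))^T$ is only $o(1)$---not polynomially small in $n$---under the minimal hypothesis $p=\omega(n^{d-k})$, so a direct concentration-plus-union-bound per $X$ is insufficient. I would close this gap by exploiting the near-independence of the indicators $Y_S=\mathbf{1}[Z_G(S,X)>T]$ for $d$-sets: since $Y_S$ and $Y_{S'}$ share edges only when $|S\cup S'|\le k$, one can bound $\Var(|B_X|)$ by counting correlated pairs and apply Chebyshev (or a more refined inequality such as Janson's) to obtain $|B_X|=(1+o(1))\E|B_X|=o(n^d)$ with probability $1-e^{-\omega(n)}$, strong enough to union-bound over all $X$. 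The $o(n^d)$ slack in the statement precisely accommodates that the Chernoff tail cannot be improved to polynomial decay under the minimal $p$ assumption.
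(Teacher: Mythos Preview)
Your opening observations are correct: for fixed $X$, each $Z_G(S,X)$ is stochastically dominated by a binomial with mean at most $(\lambda n)p\binom{n-d-1}{k-d-1}$, so the Chernoff bound gives $\Pr(Z_G(S,X)>T)=o(1)$ and hence $\E|B_X|=o(n^d)$. You also correctly identify the real difficulty: upgrading this to a statement that holds simultaneously for all of the $2^n$ possible sets $X$.

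However, your proposed fix for the large-$|X|$ regime does not work as stated. Chebyshev's inequality only yields polynomial tails, never $1-e^{-\omega(n)}$. Janson's inequality in its standard form bounds $\Pr\bigl(\bigcap_S\overline{\mathcal E_S}\bigr)$ for increasing events, which is the wrong direction; it does not give two-sided concentration of $\sum_S Y_S$. More fundamentally, your dependency claim is misleading: $Y_S$ and $Y_{S'}$ share edges whenever $|S\cup S'|\le k$, and when $2d\le k$ this holds for \emph{every} pair of $d$-sets, so the dependency graph is complete and variance-based arguments give nothing useful. Even stronger tools such as bounded-differences or Talagrand-type inequalities fall short: $|B_X|$ is $\binom{k}{d}$-Lipschitz in the edge indicators, but certifying $|B_X|\ge s$ needs roughly $sT$ edges, and the resulting exponent is of order $n^d/T=n^d/\omega(1)$, which for $d=1$ is $o(n)$ and cannot beat the $2^n$ union bound.

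The paper sidesteps all of this with a short decoupling trick that you are missing. For each potential edge $e$ one fixes once and for all a bijection $\phi_e$ from the $d$-subsets of $e$ to the set $\binom{[k]}{d}$ of labels. Writing $Z_G^I(S,X)$ for the number of edges $e\ni S$ meeting $X$ with $\phi_e(S)=I$, one has $Z_G(S,X)=\sum_I Z_G^I(S,X)$, and the point is that for each fixed label $I$ the random variables $Z_G^I(S,X)$ are \emph{mutually independent} as $S$ varies, because each edge contributes to exactly one $S$ under the label $I$. Now a Chernoff bound gives $\Pr(Z_G^I(S,X)>T/\binom{k}{d})=o(1)$ for each $S$, and by independence a second Chernoff bound on the number of bad $S$ (a sum of independent Bernoullis with total mean $o(n^d)$) gives failure probability $e^{-\omega(n^d)}$. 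A union bound over the $\binom{k}{d}$ labels and over $X$ finishes the proof. This replaces your variance argument with a genuine independence structure obtained by the colouring.
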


\begin{proof}
Let $\binom {[k]} d$ be the collection of all $d$-subsets of $\{1,\dots,k\}$. Consider the collection of all $k$-sets of vertices (i.e., all possible edges in $G$). For each such $k$-set $e$, let $\binom e d$ be the collection of $d$-subsets of $e$, and fix a (bijective) labelling $\phi_e:\binom e d\to \binom {[k]}d$. For each $d$-subset $I\subseteq \{1,\dots,k\}$ and set $S$ of $d$ vertices, let $Z_G^I(S,X)$ be the number of edges $e\in E(G)$ which include $S$, have non-empty intersection with $X$, and have $\phi_e(S)=I$.

Fix a vertex set $X$ of size at most $\lambda n$, and let $\mathcal E_S^I$ be the event that $Z_G^I(S,X)>2(\lambda n)p\binom {n-d-1}{k-d-1}$. By the Chernoff bound applied to $Z_G^I(S,X)$, we have $\Pr(\mathcal E_S^I)=o(1)$ for each $S,I$. For fixed $I$, each of the events $\mathcal E_S^I$ (ranging over different $S$) are independent from each other, so, again using a Chernoff bound, with probability $1-e^{-\omega(n^d)}$, all but $o(n^d)$ of the events $\mathcal E_S^I$ are satisfied. Taking a union bound over $O(1)$ choices of $I$ shows that with probability $1-e^{-\omega(n^d)}$, for all but $o(n^d)$ of the sets of $d$ vertices $S$, all the events of the form $\mathcal E_S^I$ hold, meaning that $Z_G(S,X)\le 2\binom k d(\lambda n)p\binom{n-d-1}{k-d-1}$. The desired result then follows from a union bound over choices of $X$.
\end{proof}

We will also need the following lemma, showing that if we consider a high-degree spanning subgraph of a typical outcome of a random hypergraph, then random subsets are likely to
inherit minimum degree properties.
\begin{lem}
\label{lem:degree-concentration}
Fix positive integers $d<k$, and fix any $0\le \mu\le 1$ and $0<\gamma,\sigma\le 1$. Then for any $p\ge n^{d-k}\log^{3} n$, a.a.s\ $G\sim{\operatorname H}^{k}\left(n,p\right)$
has the following property. Consider a spanning subgraph $G'\subseteq G$
with minimum $d$-degree at least $\left(\mu+\gamma\right)p\binom{n-d}{k-d}$,
and let $Y$ be a uniformly random subset of $\sigma n$ vertices
of $G'$. Then a.a.s.\ every $d$-set of vertices has degree at least
$\left(\mu+\gamma/2\right)p\binom{\sigma n-d}{k-d}$ into $Y$.
\end{lem}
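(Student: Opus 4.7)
The plan is to replace the uniform random subset $Y$ by a Bernoulli random subset $Y_p$ (in which each vertex is included independently with probability $\sigma':=\sigma-\varepsilon$ for a small $\varepsilon>0$ chosen later), to apply Janson's lower-tail inequality to $\deg_{G'}(S,Y_p)$, and then to transfer back to $Y$ by monotonicity. Janson (rather than Chernoff or Chebyshev) is needed because near the threshold $p\approx n^{d-k}\log^{3}n$ the relevant degrees are only polylogarithmic, while we still need to union-bound over the $\binom{n}{d}$ choices of the target $d$-set.

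First I would pin down the typical structure of $G\sim{\operatorname H}^{k}(n,p)$. By Chernoff and a union bound, a.a.s.\ every $d$-set $S$ satisfies $\deg_{G}(S)\leq 2p\binom{n-d}{k-d}$, and for every $j\in\{d+1,\dots,k\}$ and every $j$-set $T'$, $\deg_{G}(T')\leq\max\bigl(2p\binom{n-j}{k-j},\,C\log n\bigr)$ for some constant $C=C(d,k)$; the second bound exploits that for $j>d$ the expected $j$-degree is at most $O(\log^{3}n/n^{j-d})$ in our regime, making Chernoff's upper tail extremely sharp.

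Now fix such a $G$, an adversarial $G'\subseteq G$ satisfying the degree hypothesis, and a $d$-set $S$. Writing $X_{e}=\mathbf{1}\{e\setminus S\subseteq Y_p\}$ for each $e\in E(G')$ with $e\supseteq S$, and $X=\deg_{G'}(S,Y_p)=\sum_{e}X_{e}$, we have $\E X=\deg_{G'}(S)(\sigma')^{k-d}\geq(\mu+3\gamma/4)\,p\binom{\sigma n-d}{k-d}$ for $\varepsilon$ small enough. The Janson codegree term satisfies
\[
\Delta:=\!\!\sum_{\substack{\{e,e'\}:\,e\neq e'\\(e\setminus S)\cap(e'\setminus S)\neq\emptyset}}\!\!\E[X_{e}X_{e'}]\;\leq\;\sum_{j=1}^{k-d}(\sigma')^{2(k-d)-j}\sum_{\substack{T\cap S=\emptyset\\|T|=j}}\deg_{G}(S\cup T)^{2},
\]
and combining the a.a.s.\ bounds above with the identity $\sum_{|T|=j}\deg_{G}(S\cup T)=\binom{k-d}{j}\deg_{G}(S)$ gives $\Delta=O\bigl(p\binom{n-d}{k-d}\cdot\max(\log n,\,pn^{k-d-1})\bigr)$. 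Janson's lower-tail inequality with $t=(\gamma/4)p\binom{\sigma n-d}{k-d}$ then yields
\[
\Pr\bigl[X<(\mu+\gamma/2)\,p\tbinom{\sigma n-d}{k-d}\bigr]\;\leq\;\exp\!\left(-\frac{t^{2}}{2(\E X+2\Delta)}\right)\;=\;\exp(-\Omega(\log^{2}n))\;=\;n^{-\omega(1)},
\]
uniformly in the admissible range of $p$, and a union bound over the $\binom{n}{d}$ choices of $S$ delivers the conclusion for $Y_p$ a.a.s.

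Finally I would couple $Y_p$ with the uniform-size-$\sigma n$ subset $Y$ via i.i.d.\ $\mathrm{Unif}[0,1]$ labels $u_{v}$: let $Y_p=\{v:u_{v}\leq\sigma'\}$ and let $Y$ be the $\sigma n$ vertices of smallest label. By Chernoff, $|Y_p|\leq\sigma n$ (and hence $Y_p\subseteq Y$) with probability $1-e^{-\Omega(n)}$, and since the desired inequality is monotone increasing in the random set, the conclusion transfers from $Y_p$ to $Y$ a.a.s. The main obstacle is squeezing enough concentration out of polylogarithmic-sized degrees near the threshold, which is exactly what forces us to use Janson together with the a.a.s.\ control of higher-order codegrees; if $p$ were $\omega(n^{d-k+1})$, a direct Chernoff on $\deg_{G'}(S,Y)$ would already suffice.
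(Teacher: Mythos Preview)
Your argument is correct and follows the same overall arc as the paper's: pass to a Bernoulli subset $Y'$ with parameter $\sigma'$ slightly below $\sigma$, apply Janson's lower-tail inequality to $\deg_{G'}(S,Y')$ for each $d$-set $S$, union-bound, and couple back to the uniform set $Y$ by monotonicity. The paper takes $\sigma'=\sigma-n^{-2/3}$ rather than $\sigma-\varepsilon$ for a small constant $\varepsilon$, but either choice works.

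The one genuine point of divergence is how the Janson correlation term $\bar\Delta$ is controlled. The paper views $\bar\Delta_A$ as a degree-$2$ polynomial in the edge indicators of $G$ and invokes a Kim--Vu polynomial concentration inequality to show that a.a.s.\ $\bar\Delta_A\le(1+n^{2(k-d)-1}p^2)\log^3 n$ for all $A$ simultaneously. You instead establish a.a.s.\ pointwise bounds $\deg_G(T')\le\max\bigl(2p\binom{n-|T'|}{k-|T'|},\,C\log n\bigr)$ for all sets $T'$ of size at least $d+1$ via plain Chernoff plus a union bound, and then bound $\sum_{|T|=j}\deg_G(S\cup T)^2\le(\max_T\deg_G(S\cup T))\cdot\sum_T\deg_G(S\cup T)$ using the identity $\sum_{|T|=j}\deg_G(S\cup T)=\binom{k-d}{j}\deg_G(S)$. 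This is more elementary---it avoids Kim--Vu altogether---and yields a bound on $\Delta$ of the same order as the paper's, so that in either approach Janson gives failure probability $\exp(-\Omega(\log^2 n))=n^{-\omega(1)}$ at the threshold $p=n^{d-k}\log^3 n$. The Kim--Vu route has the advantage of being a single black-box application, while yours makes the dependence on codegrees more transparent and would be easier to adapt if one wanted sharper control on the exponent.
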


\begin{proof}
For a $d$-set of vertices $A$ and $1\le t\le k-d$,
define $p_{t}(A)$ to be the number of pairs of edges $(e,f)\in G$
such that $e$ and $f$ both include $A$, and $\left|e\cup f\right|=2(k-d)-t$,
and define the random variable
\[
\bar{\Delta}_{A}=\sum_{t=1}^{k-d}p_{t}\left(A\right)\sigma^{2\left(k-d\right)-t}.
\]
For each $(k-d)$-set of vertices $e\subseteq V\left(G\right)$, let
$\xi_{e}$ be the indicator random variable for the event $e\cup A\in E\left(G\right)$.
Note that $\bar{\Delta}_A$ is a quadratic polynomial in the $\xi_{e}$,
and has expectation $E_{0}:=\E\bar{\Delta}_{A}=O\left(n^{2\left(k-d\right)-1}p^{2}\right)$.
For any $(k-d)$-sets of vertices $e,f\subseteq V\left(G\right)$, we
compute the expected partial derivatives
\[
\E\frac{\partial\bar{\Delta}_{A}}{\partial\xi_{e}}=O\left(n^{k-d-1}p\right),\quad\E\frac{\partial^{2}\bar{\Delta}_{A}}{\partial\xi_{e}\partial\xi_{f}}=O\left(1\right).
\]
Let $E_{1}$ be the common value of the $\E\left[\partial\bar{\Delta}_{A}/\partial\xi_{e}\right]$
and let $E_{2}$ be the maximum value of the $\E\left[\partial^{2}\bar{\Delta}_{A}/\partial\xi_{e}\partial\xi_{f}\right]$.
Let $E_{\ge0}=\max\left\{ E_{0},E_{1},E_{2}\right\} $ and let $E_{\ge1}=\max\left\{ E_{1},E_{2}\right\} $.
Given our assumption $p\ge n^{d-k}$ we have
\[
E_{\ge0}=O\left(1+n^{2\left(k-d\right)-1}p^{2}\right),\quad E_{\ge1}=O\left(1+n^{k-d-1}p\right).
\]

Applying a Kim--Vu-type polynomial concentration inequality (for example \cite[Theorem~1.36]{TV06}, with $\lambda=\Omega\left((t/\sqrt{E_{\ge 0}E_{\ge 1}})^{1/(2-1/2)}\right)$), we see that for all $t\ge0$,
\[
\Pr\left(\bar{\Delta}_{A}\ge E_{0}+t\right)\le\exp\left(-C_2\left(\frac{t}{\sqrt{E_{\ge0}E_{\ge1}}}\right)^{1/(2-1/2)}+(2-1)\log\binom n{k-d}\right)
\]
for some constant $C_2>0$.
It follows that
\begin{equation}
\bar{\Delta}_{A}\le\left(1+n^{2\left(k-d\right)-1}p^{2}\right)\log^{3}n\label{eq:delta}
\end{equation}
with probability $1-n^{\omega\left(1\right)}$. By a union bound,
a.a.s.\ this holds for all $d$-sets $A$. This is the only property of $G$
we require.

So, fix an outcome of $G$ satisfying \cref{eq:delta} for all $A$, and fix a spanning subgraph $G'\subseteq G$ with minimum $d$-degree at least
$\left(\mu+\gamma\right)p\binom{n-d}{k-d}$. Instead of considering
a uniformly random set of $\sigma n$ vertices, we consider a closely
related ``binomial'' random vertex subset $Y'$ obtained by including
each vertex with probability $\sigma':=\sigma-n^{-2/3}$ independently. This suffices because $Y'$ can a.a.s.\ be coupled as a subset of a uniformly random set of size $\sigma n$ (note that the standard deviation of the size of $Y'$ is $O(\sqrt n)$).

Let $A$ be a $d$-set of vertices, and let $\Gamma(A)$ be the link
$(k-d)$-graph of $A$ with respect to $G'$. Then $\deg_{Y'}(A)$
is the number of $e\in\Gamma(A)$ that are subsets of $Y'$, so it has
expected value $|\Gamma(A)|(\sigma')^{k-d}\ge\left(\mu+\gamma\right)p\binom{\sigma' n-d}{k-d}$.
By Janson's inequality (see \cite[Theorem~2.14]{JLR00}), our assumption on $p$, and \cref{eq:delta}, we have
\[
\Pr\left(\deg_{Y'}(A)\le\left(\mu+\gamma/2\right)p\binom{\sigma' n-d}{k-d}\right)\le\exp\left(-\frac{\left((\gamma/2)p\binom{\sigma' n-d}{k-d}\right)^{2}}{2\bar{\Delta}_{A}}\right)=o(n^{-d}).
\]
Then, take the union bound over all $A$.
\end{proof}

Finally, we also need the following ``almost-all'' version of \cref{lem:degree-concentration}.
\begin{lem}
\label{lem:degree-concentration-weak}
Fix positive integers $d<k$, and fix any $0\le \mu\le 1$ and $0<\gamma,\sigma\le 1$. Then for any $p=\omega(n^{d-k})$, a.a.s\ $G\sim{\operatorname H}^{k}\left(n,p\right)$
has the following property. Consider a spanning subgraph $G'\subseteq G$
such that all but at most $o(n^d)$ of the $d$-sets of vertices have degree at least $\left(\mu+\gamma\right)p\binom{n-d}{k-d}$,
and let $Y$ be a uniformly random subset of $\sigma n$ vertices
of $G'$. Then a.a.s.\ all but $o(n^d)$ of the $d$-sets of vertices in $G'$ have degree at least
$\left(\mu+\gamma/2\right)p\binom{\sigma n-d}{k-d}$ into $Y$.
\end{lem}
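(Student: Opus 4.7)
The plan is to follow the proof of \cref{lem:degree-concentration} very closely, but to replace both applications of the union bound (first for controlling $\bar\Delta_A$ across all $d$-sets, and then for the conclusion about $\deg_Y(A)$) by applications of Markov's inequality. This is precisely the trade-off that lets us weaken ``all $d$-sets'' to ``all but $o(n^d)$ of the $d$-sets'' in both the hypothesis and the conclusion, in exchange for the weaker assumption $p=\omega(n^{d-k})$.

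First, for each $d$-set $A$ define $\bar\Delta_A$ exactly as in the proof of \cref{lem:degree-concentration}, so that $\E\bar\Delta_A = E_0 = O(n^{2(k-d)-1}p^2)$. Choose any slowly growing function $h(n)\to\infty$ (say $h(n)=\log n$), and call $A$ \emph{$\bar\Delta$-good} if $\bar\Delta_A\le E_0 h(n)$. By linearity of expectation, the expected number of $\bar\Delta$-bad $A$'s is at most $n^d/h(n)=o(n^d)$, so Markov's inequality implies that a.a.s.\ the number of $\bar\Delta$-bad $A$'s is $o(n^d)$. This is the only property of $G$ that we need; condition on an outcome of $G$ satisfying it.

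Now fix any spanning subgraph $G'\subseteq G$ as in the statement. As in the proof of \cref{lem:degree-concentration}, it suffices to consider a ``binomial'' random subset $Y'$ obtained by including each vertex independently with probability $\sigma'=\sigma-n^{-2/3}$, since $Y'$ can a.a.s.\ be coupled as a subset of $Y$. For any $d$-set $A$ which is both \emph{degree-good} in $G'$ (i.e., $\deg_{G'}(A)\ge(\mu+\gamma)p\binom{n-d}{k-d}$) and $\bar\Delta$-good in $G$, we note that $\bar\Delta_A$ computed in $G'$ is only smaller than in $G$. Applying Janson's inequality to the sum of indicators $\xi_e$ (one for each edge $e\in E(G')$ containing $A$, for the event $e\setminus A\subseteq Y'$) yields
\[
\Pr\!\left(\deg_{Y'}(A)\le(\mu+\gamma/2)p\binom{\sigma' n-d}{k-d}\right)\le\exp\!\left(-\Omega\!\left(\frac{p^{2}n^{2(k-d)}}{\bar\Delta_A}\right)\right)\le\exp(-\Omega(n/h(n)))=o(1),
\]
where the $o(1)$ is uniform over all such $A$.

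Finally, let $B$ denote the (random) set of $d$-sets that fail the conclusion under $Y'$. The number of $d$-sets that are either degree-bad (by hypothesis on $G'$) or $\bar\Delta$-bad (by the event we conditioned on) is $o(n^d)$, so
\[
\E|B|\le o(n^d)+n^d\cdot o(1)=o(n^d),
\]
and Markov's inequality gives $|B|=o(n^d)$ a.a.s. The only real obstacle is ensuring that the Janson exponent $p^2 n^{2(k-d)}/\bar\Delta_A$ still tends to infinity after we have allowed ourselves the slack factor $h(n)$ in the bound on $\bar\Delta_A$; this is exactly why we take $h(n)$ to grow arbitrarily slowly, which in the regime $p=\omega(n^{d-k})$ is enough to make the exponent $\Omega(n/h(n))$ diverge.
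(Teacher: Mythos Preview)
Your proof is correct and takes essentially the same approach as the paper's: both replace the two union bounds from the proof of \cref{lem:degree-concentration} by applications of Markov's inequality, first to bound the number of $d$-sets $A$ with large $\bar\Delta_A$, and then to bound the number of $d$-sets failing the degree conclusion into $Y'$. The only difference is that the paper makes the explicit $p$-dependent choice $h=1/f=(pn^{k-d})^{1/4}$; this is worth noting because when $p$ is only barely $\omega(n^{d-k})$ the diagonal term in $\bar\Delta_A$ dominates, giving $E_0=\Theta(pn^{k-d})$ rather than $O(n^{2(k-d)-1}p^2)$ and a Janson exponent of only $\Omega(pn^{k-d}/h)$ rather than $\Omega(n/h)$ --- so a fixed choice such as $h=\log n$ would not suffice in that regime, but (as you indicate) taking $h$ to grow sufficiently slowly relative to $pn^{k-d}$ does.
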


\begin{proof}
Let $f=(n^{d-k}/p)^{1/4}=o(1)$. First, recall the definition of $\bar\Delta_A$ from \cref{lem:degree-concentration}. By Markov's inequality, with probability at least $1-f=1-o(1)$ all but at most $f n^d=o(n^d)$ of the $d$-sets of vertices $A$ in $G'$ have $\bar \Delta_A\le \E \bar \Delta_A/f=O(n^{2(k-d)-1}p^2/f)$ and have degree at least $(\mu+\gamma)p\binom{n-d}{k-d}$ (in which case say $A$ is \emph{good}). So, fix an outcome of $G$ satisfying this property.

Now, let $A$ be a good $d$-set of vertices, and let $Y'$ be as in \cref{lem:degree-concentration}. By the same calculation as in \cref{lem:degree-concentration} (using Janson's inequality), we have
\[
\Pr\left(\deg_{Y'}(A)\le\left(\mu+\gamma/2\right)p\binom{\sigma n-d}{k-d}\right)=o(1),
\]
so by Markov's inequality, a.a.s.\ only $o(n^d)$ good $d$-sets fail to satisfy the degree condition (and there are only $o(n^d)$ non-good $d$-sets).
\end{proof}

\section{Almost-perfect matchings}
\label{sec:almost-perfect}

The easier part of the proof of \cref{thm:resilience} is to show that a degree condition implies the existence of \emph{almost}-perfect matchings, as follows.

\begin{lem}
\label{lem:almost-perfect}Fix positive integers $d<k$ and consider
any $\gamma>0$. Suppose $p=\omega\left(n^{1-k}\log n\right)$. Then, with
probability $1-e^{-\Omega\left(pn^{k}\right)}$, $G\sim{\operatorname H}^{k}\left(n,p\right)$
has the following property. In every spanning subgraph $G'\subseteq G$ for which all but $o(n^d)$ of the $d$-sets of vertices have degree at least $\left(\mu_{d}\left(k\right)+\gamma\right)p\binom{n-d}{k-d}$, there is a matching covering all but $o\left(n\right)$ vertices.
\end{lem}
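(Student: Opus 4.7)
The plan is to mirror the two-step proof of \cref{lem:almost-perfect-convergence} from \cref{sec:convergence}, but lifted into the random setting via the sparse regularity method. The only properties of $G\sim {\operatorname H}^k(n,p)$ we need are (a) $(o(1),p,1+o(1))$-upper-uniformity, which holds with probability $1-e^{-\Omega(pn^k)}$ by \cref{lem:random-upper-uniform} (using $p=\omega(n^{1-k}\log n)$), and (b) possibly some almost-all concentration of the edge distribution of the type provided by the lemmas in \cref{sec:concentration}. The rest of the argument is deterministic in $G$.

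Given such a $G$ and a spanning subgraph $G'\subseteq G$ satisfying the $d$-degree condition, the first step is to apply the sparse regularity lemma (\cref{lem:sparse-regularity}) to $G'$ with a small parameter $\varepsilon>0$ and a large parameter $t_0$, producing a cluster $k$-graph $\mathcal{R}$ on some $t$ vertices with $t_0\le t\le T$. By \cref{lem:transfer-cluster-graph-sparse}, for $\varepsilon$ small enough and $t_0$ large enough (relative to $\gamma$), all but $\sqrt{\varepsilon}\binom{t}{d}$ of the $d$-sets of clusters in $\mathcal{R}$ have degree at least $(\mu_d(k)+\gamma/2)\binom{t-d}{k-d}$. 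So $\mathcal{R}$ is a \emph{dense} $k$-graph which satisfies the minimum $d$-degree condition on almost all $d$-sets.

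The second step is to find an almost-perfect matching in $\mathcal{R}$. This is done exactly as in the proof of \cref{lem:almost-perfect-convergence}: choose $Q$ divisible by $k$ large enough that $m_d(k,Q)/\binom{Q-d}{k-d}\le \mu_d(k)+\gamma/4$ (possible by \cref{thm:convergence}), randomly equipartition $V(\mathcal{R})$ into $t/Q$ blocks of size $Q$, and invoke \cref{lem:random-subset-degrees} with $\delta=\sqrt{\varepsilon}$ and $\eta=\gamma/2$. For $\varepsilon$ small and $Q$ large, the failure parameter $\binom{Q}{d}(\sqrt{\varepsilon}+e^{-c(\gamma/2)^2 Q})$ is as small as we like, so all but a negligible fraction of blocks have minimum $d$-degree at least $(\mu_d(k)+\gamma/4)\binom{Q-d}{k-d}$ and hence admit a perfect matching by the definition of $m_d(k,Q)$. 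Unioning these matchings gives a matching in $\mathcal{R}$ missing only $o(t)$ clusters.

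The final step is to lift this matching to $G'$. Each matched $k$-set of clusters $\{V_{i_1},\dots,V_{i_k}\}\in E(\mathcal{R})$ spans an $(\varepsilon,p)$-regular $k$-partite $k$-graph in $G'$ of density greater than $2\varepsilon p$. A standard greedy argument works here: as long as each remaining part of this $k$-partite graph has at least an $\varepsilon$-fraction of its vertices uncovered, $(\varepsilon,p)$-regularity guarantees density at least $\varepsilon p>0$ on the uncovered pieces, so another edge can be added to the matching. This produces a matching covering all but at most $\varepsilon k(n/t)$ vertices in each of the $k$ involved clusters. Assembling these partial matchings across all (almost-all) matched edges of $\mathcal{R}$ yields a matching of $G'$ that misses at most $O(\varepsilon n)+o(n)$ vertices; since $\varepsilon$ could be taken arbitrarily small, this is $o(n)$, as required. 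The only real subtlety is ordering the choice of constants ($\gamma$ first, then $Q$, then $\varepsilon$ and $t_0$) so that all error terms collapse correctly, and checking that \cref{lem:random-subset-degrees} really does apply to $\mathcal{R}$ despite $\mathcal{R}$ failing the degree condition on a small set of $d$-tuples (which is exactly why the ``$\delta$''-parameter appears in its statement).
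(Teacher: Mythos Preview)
Your proposal is correct and follows essentially the same route as the paper: establish upper-uniformity of $G$ (the only property needed, so your hedge (b) is unnecessary), apply sparse regularity to $G'$, transfer the degree condition to the cluster $k$-graph $\mathcal{R}$ via \cref{lem:transfer-cluster-graph-sparse}, find an almost-perfect matching in $\mathcal{R}$ by the random $Q$-partition trick of \cref{lem:almost-perfect-convergence} (using the $\delta$-slack in \cref{lem:random-subset-degrees}), and then greedily extract a near-perfect matching inside each regular $k$-tuple. The paper packages your greedy step as a separate \cref{lem:regular-almost-perfect}, but the content is identical.
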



We will prove \cref{lem:almost-perfect} with the sparse regularity lemma. It will be important that an almost-perfect matching in the cluster hypergraph $\mathcal R$ can be translated to an almost-perfect matching in the original hypergraph $G$. This will be deduced from the following lemma.

\begin{lem}\label{lem:regular-almost-perfect}
Consider an $(\varepsilon,p)$-regular $k$-partite $k$-graph $G$ with parts $V_1,\dots,V_k$ of the same size $m$, and density at least $2\varepsilon p$. Then $G$ has a matching of size at least $(1-\varepsilon)m$.
\end{lem}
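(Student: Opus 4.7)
The plan is a one-paragraph argument by contradiction using the definition of $(\varepsilon,p)$-regularity directly.

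Suppose, for contradiction, that $G$ has no matching of size at least $(1-\varepsilon)m$, and let $M$ be a maximum matching in $G$, so $|M| < (1-\varepsilon)m$. For each $i \in \{1,\dots,k\}$, let $U_i \subseteq V_i$ be the set of vertices of $V_i$ not covered by $M$. Since $M$ uses exactly $|M|$ vertices from each $V_i$, we have
\[
|U_i| = m - |M| > \varepsilon m = \varepsilon |V_i|.
\]

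Now I apply the $(\varepsilon,p)$-regularity of $G$ to the sets $U_1,\dots,U_k$: since each $|U_i| \ge \varepsilon|V_i|$,
\[
|d(U_1,\dots,U_k) - d(V_1,\dots,V_k)| \le \varepsilon p.
\]
Combined with the hypothesis $d(V_1,\dots,V_k) \ge 2\varepsilon p$, this yields
\[
d(U_1,\dots,U_k) \ge 2\varepsilon p - \varepsilon p = \varepsilon p > 0.
\]
In particular $e(U_1,\dots,U_k) > 0$, so there exists at least one edge of $G$ with exactly one vertex in each $U_i$. Adding this edge to $M$ produces a strictly larger matching, contradicting the maximality of $M$. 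Hence $G$ has a matching of size at least $(1-\varepsilon)m$, as required.

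The argument is essentially immediate from the definitions, so there is no serious obstacle; the only point that needs to be checked is that the unmatched sets $U_i$ are large enough to trigger the regularity condition, which is precisely where the assumed upper bound on the size of $M$ is used.
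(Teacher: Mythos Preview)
Your proof is correct and follows essentially the same approach as the paper's own proof: take a maximum matching, note that the uncovered sets in each part have size exceeding $\varepsilon m$, apply $(\varepsilon,p)$-regularity together with the density lower bound to produce an edge among the uncovered vertices, and contradict maximality. The only cosmetic difference is that the paper first records the general fact that any collection of large subsets spans an edge before specialising to the uncovered sets, whereas you apply regularity directly to the $U_i$.
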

\begin{proof}
By the definition of $(\varepsilon,p)$-regularity, for any choice of $V'_1\subseteq V_1,\ldots, V_k'\subseteq V_k$, satisfying $|V'_i|\geq \varepsilon m$ for all $i$, we have
$$|d(V'_1,\ldots,V'_k)-d(V_1,\ldots,V_k)|\leq \varepsilon p,$$ 
so $d(V'_1,\ldots,V'_k)\ge \varepsilon p$, meaning that the sets $V'_1,\ldots,V'_k$ span at least one edge.

Now, suppose for the purpose of contradiction that there is no matching of size $(1-\varepsilon)m$ in $G$, and consider a maximum matching $M$. Since $|M|\le (1-\varepsilon)m$, the set of uncovered vertices $V_i'$ in each $V_i$ has size at least $\varepsilon m$, so $V'_1,\ldots,V'_k$ span at least one edge, by the above discussion. But then this edge can be used to extend $M$ to a larger matching, contradicting maximality.
\end{proof}

Now we prove \cref{lem:almost-perfect}.

\begin{proof}[Proof of \cref{lem:almost-perfect}]
Choose large $Q$, divisible by $k$, such that $m_{d}\left(k,Q\right)/\binom{Q-d}{k-d}\le \mu_{d}\left(k\right)+\gamma/4$. Also, choose some small $\varepsilon>0$ and let $\lambda=\binom{Q}{d}\left(\sqrt{\varepsilon}+e^{-c\eta^{2}Q}\right)$ be as in \cref{lem:random-subset-degrees} (taking $\eta=\gamma/2$ and $\delta=\sqrt \varepsilon$). Choosing large $Q$, and $\varepsilon$ small relative to $Q$, we can make $\lambda$ arbitrarily small.

Since $p=\omega\left(n^{1-k}\log n\right)$, by \cref{lem:random-upper-uniform} $G$ is a.a.s.\ $\left(o\left(1\right),p,1+o\left(1\right)\right)$-upper-uniform. So, by \cref{lem:transfer-cluster-graph-sparse}, if we apply the sparse regularity
lemma (\cref{lem:sparse-regularity}) to $G'$ with small $\varepsilon$ and
large $t_{0}$, we obtain a cluster $k$-graph $\mathcal{R}$ such
that all but $\sqrt{\varepsilon}\binom{t}{d}$ of the $d$-sets of
clusters have degree at least $\left(\mu_{d}\left(k\right)+\gamma/2\right)\binom{n-d}{k-d}$.

Now, we randomly partition the $t$ clusters into $t/Q$ subsets of
size $Q$. By \cref{lem:random-subset-degrees}, with positive probability
all but a $\lambda$-fraction of the subsets have minimum
degree at least $\left(\mu_{d}\left(k\right)+\gamma/4\right)\binom{Q-d}{k-d}$.
By our choice of $Q$, each of these $Q$-subsets of clusters $\mathcal{S}$
has the property that $\mathcal{R}\left[\mathcal{S}\right]$ has a
perfect matching, and we can combine these to find a matching covering
all but $\lambda t$ of the $t$ vertices of the cluster
graph.

Each edge of this matching corresponds to a $k$-tuple of clusters
$\left(V_{i_{1}},\dots,V_{i_{k}}\right)$ in which we can
find a matching with $\left(1-\varepsilon\right)\left(n/t\right)$
vertices, by \cref{lem:regular-almost-perfect}. We can combine these matchings to get a matching in $G'$ covering at least
$(1-\varepsilon-\lambda)n$ vertices. Since $\varepsilon$ and $\lambda$ could have
been arbitrarily small, this implies that we can find a matching covering
all but $o\left(n\right)$ vertices.
\end{proof}

\section{Sparse absorption}
\label{sec:sparse-absorption}

The most challenging part of the proof of \cref{thm:resilience} is to prove a suitable
sparse analogue of the strong absorbing lemma. Specifically, the lemma
we will prove is as follows.
\begin{lem}
\label{lem:sparse-absorbing-lemma}Fix positive integers $d<k$, and some $\gamma>0$. There are $\lambda,C>0$ such that the following holds. For $p$ satisfying $p\ge\max\{ n^{-k/2+\gamma}, Cn^{2-k}\}$, a.a.s.\ $G\sim{\operatorname H}^{k}\left(n,p\right)$ has the following property.
For any spanning subgraph $G'$ of $G$ with minimum $d$-degree at least
$\left(\mu_{d}\left(k\right)+\gamma\right)p\binom{n-d}{k-d}$, there
is a set $X\subseteq V\left(G'\right)$ such that
\begin{enumerate}
\item [(i)]$\left|X\right|\le (\gamma/2)^k n$, and
\item [(ii)]for every set $W\subseteq V\left(G\right)\backslash X$ of
at most $\lambda n$ vertices, there is a matching in $G'$ covering
exactly the vertices of $X\cup W$ (provided $|X\cup W|$ is divisible by $k$).
\end{enumerate}
\end{lem}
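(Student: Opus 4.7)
The proof follows the distributive-absorption strategy outlined in \cref{sec:outline}. I would organise it into four phases.

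\emph{Phase 1: A non-constructive pattern absorber.} First I would fix, for some large constant $M$, an explicit linear $k$-graph $A_0$ of bounded size --- the ``pattern'' --- whose edges can be split into two matchings, one covering all $v(A_0)$ vertices and the other covering all vertices except for a distinguished root $k$-tuple $\{x_1,\dots,x_k\}$. Following the outline in \cref{subsec:absorbing-method}, $A_0$ is built by gluing together matchings of a typical $M$-vertex induced subgraph of a hypothetical $k$-graph of minimum $d$-degree $(\mu_d(k)+\gamma/2)\binom{M-d}{k-d}$; by \cref{lem:random-subset-degrees} (applied with $\delta=0$) and the definition of $\mu_d(k)$, almost all such subgraphs contain the required matchings, so $A_0$ exists even though we do not know $\mu_d(k)$ explicitly. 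Crucially, I would arrange $A_0$ to be \emph{linear} and sufficiently sparse that $m_k(A_0)<2/k$, matching the hypothesis $p\ge n^{-k/2+\gamma}=n^{-1/(k/2)+\gamma}$ of \cref{thm:KLR}.

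\emph{Phase 2: Rooted absorbers via contraction and embedding.} For each $k$-tuple of roots $(x_1,\dots,x_k)$ in $G'$, I want to find many copies of $A_0$ rooted at $(x_1,\dots,x_k)$. The obstacle is that \cref{thm:KLR} cannot embed \emph{rooted} subgraphs directly. I would resolve this by the contraction trick: for each $x_i$, fix a large matching $N_i$ in its link $(k-1)$-graph inside $G'$ and contract each edge of $N_i$ to a single super-vertex. The contracted graph $G'_{\mathrm{cont}}$ embeds as a subgraph of a random $k$-graph of comparable density, so \cref{lem:transfer-cluster-graph-partition} (applied with the partition consisting of the root super-vertices versus the rest) produces a cluster graph $\mathcal R$ inheriting a minimum $d$-degree close to $(\mu_d(k)+\gamma/2)\binom{t-d}{k-d}$ on almost all $d$-sets. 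In $\mathcal R$, copies of (a suitably rooted version of) $A_0$ are abundant by the argument of Phase~1 applied to $\mathcal R$ itself, and \cref{thm:KLR} then pulls these copies back to $\Theta(n^{v(A_0)-k}p^{e(A_0)})$ genuine absorbers in $G'$ rooted at $(x_1,\dots,x_k)$.

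\emph{Phase 3: Assembling the absorbing structure via a template.} Building on Montgomery's distributive absorption, I would fix a bipartite ``template'' specifying $\ell=\Theta(n)$ target $k$-sets $T_1,\dots,T_\ell$ of vertices together with a \emph{flexible set} $Z\subseteq V(G)$ of size $\Theta(n)$, designed so that for every $Z'\subseteq Z$ with $|Z'|\le \lambda' n$ there is a selection of the $T_j$ whose union is exactly the vertex set of the template minus $Z'$. Iteratively for $j=1,\dots,\ell$, I would use Phase~2 to pick an absorber $A_j\cong A_0$ rooted at $T_j$ and disjoint from all previously chosen $A_i$; this is possible because there are polynomially many absorbers per root while only $O(\ell)=o(n^{v(A_0)-k}p^{e(A_0)})$ vertices are forbidden. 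Setting $X=\bigcup_j V(A_j)$ yields $|X|=O(\ell)\le(\gamma/2)^k n$ after a suitable choice of template size.

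\emph{Phase 4: Finishing move.} Given $W\subseteq V(G)\setminus X$ with $|W|\le \lambda n$ and $|X\cup W|$ divisible by $k$, I would first apply the Aharoni--Haxell matching criterion inside $G'[W\cup Z]$ to extract a matching $M_1$ covering $W$ and an appropriately sized subset $Z'\subseteq Z$. The relevant degree hypothesis holds because \cref{lem:delete-vertices-degree-ok,lem:delete-vertices-degree-almost-ok} show that removing $X\cup W$ from $G'$ only perturbs $d$-degrees by $o(1)$ of the full degree. Using the template property, the corresponding selection of $T_j$'s and the companion ``root-covering'' matchings inside each $A_j$ then yields a perfect matching $M_2$ of $G'[X\setminus V(M_1)]$, and $M_1\cup M_2$ covers exactly $X\cup W$.

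The \textbf{main obstacle} is Phase~2: one must design $A_0$ so that its $k$-density $m_k(A_0)$ is small enough to reach the threshold $p\ge n^{-k/2+\gamma}$, while remaining realizable by the non-constructive matching argument of Phase~1, and one must make the contraction argument compatible with the partitioned form of \cref{lem:transfer-cluster-graph-partition} and of \cref{thm:KLR} so that a cluster-level copy of $A_0$ genuinely yields an absorber rooted at the intended $(x_1,\dots,x_k)$. The template machinery of Phase~3 and the Aharoni--Haxell step of Phase~4 are, by comparison, routine once Phase~2 is in hand.
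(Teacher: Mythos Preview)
Your proposal follows essentially the same route as the paper --- non-constructive locally sparse absorbers (\cref{lem:find-subabsorber}), contraction plus the sparse embedding lemma for rooted absorbers (\cref{lem:find-absorbers}), greedy placement on a Montgomery-style resilient template (\cref{lem:resilient-template}), and an Aharoni--Haxell step matching $W$ into a pre-chosen rich flexible set $Z$ (\cref{lem:rich-set}). One point to tighten: you cannot fix a single pattern $A_0$ in advance, because the non-constructive Dirac-threshold argument only guarantees that \emph{some} bounded-order $K$-sparse absorber exists in the cluster graph $\mathcal R$, and its isomorphism type depends on $\mathcal R$; the paper therefore states the embedding property simultaneously for \emph{all} $k$-graphs $H$ of bounded size with $m_k(H)\le 2/k+\gamma/2$, and what is actually embedded via \cref{thm:KLR} is the \emph{contracted} absorber (the union of $k-1$ sub-absorbers glued at their roots, see \cref{def:sub-absorber}), whose $k$-density is controlled by \cref{lem:contracted-absorber}.
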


Most of the rest of the paper will be devoted to proving \cref{lem:sparse-absorbing-lemma}, but first we  give the simple deduction of \cref{thm:resilience} from \cref{lem:delete-vertices-degree-almost-ok,lem:almost-perfect,lem:sparse-absorbing-lemma}.

\begin{proof}[Proof of \cref{thm:resilience}]
We may assume that $\gamma$ is small relative to $d$ and $k$ (the lemma statement only gets stronger as we decrease $\gamma$). Let $G\sim{\operatorname H}^{k}\left(n,p\right)$ and let $G'\subseteq G$ be a spanning subgraph with $\delta_d(G')\ge (\mu_d(k)+\gamma)p\binom {n-d}{k-d}$. By \cref{lem:sparse-absorbing-lemma}, a.a.s.\ $G'$ has an absorbing
subset $X$ of size at most $(\gamma/2)^k n$. Let $n'=n-\left|X\right|$
and observe that since $X$ is so small, we have by \cref{lem:delete-vertices-degree-almost-ok} that all but $o(n^d)$ $d$-subsets in $G'-X$ have degree at least $\left(\mu_{d}\left(k\right)+\gamma/2\right)\binom{n'-d}{k-d}p$ in $G'-X$. 

By \cref{lem:almost-perfect}, it follows that $G'-X$ has a matching 
covering all vertices of $V\left(G'\right)\backslash X$ except a
set $W$ of size $o\left(n\right)$. By the defining property of the
absorbing set $X$, it follows that $G'$ has a perfect matching.
\end{proof}

The crucial idea for the proof of \cref{lem:sparse-absorbing-lemma}
is to find many small subgraphs called \emph{absorbers}, which can each contribute to a matching in two different ways.
\begin{defn}[absorbers]
\label{def:absorber}An \emph{absorber} rooted on a $k$-tuple
of vertices $\left(x_{1},\dots,x_{k}\right)$ is a $k$-graph
on some set of vertices including $x_{1},\dots,x_{k}$, whose edges can be partitioned into:
\begin{itemize}
\item a perfect matching, in which each of $x_{1},\dots,x_{k}$ are in a
unique edge (the \emph{covering matching}), and;
\item a matching covering all vertices except $x_{1},\dots,x_{k}$ (the
\emph{non-covering matching}).
\end{itemize}
\end{defn}
We call $x_{1},\dots,x_{k}$ the \emph{rooted vertices}, and the $k$ edges containing each of $x_{1},\dots,x_{k}$ are called
\emph{rooted edges}.
The \emph{order} of the absorber is its number of vertices other
than $x_{1},\dots,x_{k}$.

Absorbers are the basic building blocks for a larger ``absorbing structure'', whose vertex set we will take as the set $X$ in \cref{lem:sparse-absorbing-lemma}. The relative positions of the absorbers in this
structure will be determined by a ``template'' with a ``resilient
matching'' property as will be described in the next few lemmas. 

\begin{lem}
\label{lem:resilient-template}For fixed $k\in{\mathbb N}$ there is $L>0$ such that the following holds.
For any sufficiently large $r$, there exists a $k$-graph $T$ with
at most $Lr$ vertices, at most $Lr$ edges, and an identified
set $Z$ of $r$ vertices, such that if we remove fewer than $r/2$
vertices from $Z$, the resulting hypergraph has a perfect matching (provided its number of vertices is divisible by $k$). We call $T$ a \emph{resilient template}
and we call $Z$ its \emph{flexible set}. We say $r$ is the \emph{order} of the resilient template.
\end{lem}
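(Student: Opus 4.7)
The plan is to adapt Montgomery's distributive-absorption template construction~\cite{Mon14,Mon19}, originally developed for graphs, to the $k$-uniform setting. Montgomery's construction produces a \emph{graph} $T_0$ on $O(r)$ vertices and edges with the required robust-matching property: one builds a bipartite graph with parts $(A, B)$ satisfying $Z \subseteq A$, with constant-degree random edges, and verifies via Hall's theorem that removing any $S \subseteq Z$ with $|S| < r/2$ still leaves a perfect matching. The verification uses a union bound over subsets $X \subseteq V(T_0) \setminus S$ which could potentially witness a failure of Hall's condition, combined with a Chernoff-type estimate to show that the expansion needed for Hall holds with positive (in fact, overwhelming) probability in the random bipartite construction.

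To pass from the graph template $T_0$ to a $k$-uniform template $T$, I would replace each graph edge $\{u,v\} \in E(T_0)$ by a $k$-edge $\{u,v\} \cup D_{uv}$, where $D_{uv}$ is a private block of $k-2$ new ``dummy'' vertices introduced for that edge. To prevent the dummies of \emph{unused} graph edges from being stranded when we assemble the hypergraph matching, I would augment $T$ with additional ``backup'' $k$-edges covering each block $D_{uv}$, arranged so that the leftover dummies (those associated with edges not chosen into the underlying graph matching) can themselves always be packed into a perfect matching. The total vertex and edge counts of $T$ stay at $O(r)$, since $|E(T_0)| = O(r)$, each fattened edge adds $k-2 = O(1)$ new dummies, and the backup structure contributes at most $O(r)$ further vertices and edges.

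To establish the robust matching property of $T$, for any admissible $S \subseteq Z$ one first takes a perfect matching $M_0$ of $T_0 - S$ (guaranteed by Montgomery's property), lifts each $e \in M_0$ to its fattened $k$-edge (which simultaneously covers $u$, $v$, and the dummies $D_e$), and then uses the backup structure to cover the dummies of edges $e' \notin M_0$. The main obstacle is the \emph{combinatorial bookkeeping}: the backup edges and the dummy blocks must be organised so that, for every admissible $S$, the set of uncovered dummies is exactly matchable by the backup $k$-edges. In particular, the divisibility condition $|V(T)| - |S| \equiv 0 \pmod{k}$ must be aligned with the corresponding divisibility of the uncovered-dummy count. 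The cleanest route I would try is to attach each vertex $v \in V(T_0)$ (rather than each edge of $T_0$) to its own ``balanced'' family of dummies and backup edges, so that divisibility is guaranteed vertex-by-vertex rather than only globally; this localises the bookkeeping and converts the global divisibility constraint into many local ones, each of which is trivial to satisfy.
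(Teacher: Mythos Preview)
Your outline has the right starting point---lift Montgomery's bipartite template to the $k$-uniform setting---but there are two genuine gaps.

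First, you misstate what Montgomery's lemma gives. His bipartite graph $R$ has parts of sizes $3s$ and $4s$ (with $|Z|=2s$), so a perfect matching exists only after removing \emph{exactly} $s$ vertices from $Z$; removing fewer leaves the two sides unbalanced and no perfect matching can exist. Your proposal never explains how to pass from ``exactly $s$'' to ``fewer than $r/2$''. The paper handles this with a second gadget: it overlays on $Z$ a sparse $k$-graph with no independent set of size $r/2$ (obtained by a one-line random construction), so that for any admissible $W\subseteq Z$ one can first greedily match inside $Z\setminus W$ until exactly $s$ vertices of $Z$ remain, and only then invoke the (lifted) Montgomery property.

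Second, your per-edge dummy construction creates the stranded-dummy problem you yourself flag, and neither the ``backup edges'' idea nor the ``dummies per vertex'' idea is actually carried out; note that attaching $k-2$ dummies to \emph{every} vertex cannot work directly, since an edge $\{u,v\}$ would then want $2(k-2)+2$ vertices. The paper's lift avoids the problem entirely by exploiting the bipartite structure of $R$: it introduces $k-2$ new vertex classes $X_1,\dots,X_{k-2}$, each put in bijection with the $X$-side of $R$ via a fixed perfect matching, and extends each edge $\{x,y\}\in E(R)$ (with $x\in X$) to the $k$-edge consisting of $y$, $x$, and the unique chain through $X_1,\dots,X_{k-2}$ attached to $x$. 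Since any perfect matching of $R$ covers each $x\in X$ exactly once, its chain is covered exactly once as well---there are no leftover dummies and hence no backup structure is needed.
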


We defer the proof of \cref{lem:resilient-template} to \cref{subsec:construct-template}. It is a simple reduction from a random graph construction due to Montgomery~\cite{Mon14,Mon19}. We will want to arrange absorbers in the positions prescribed by a
resilient template, as follows.
\begin{defn}
An \emph{$\left(r,Q\right)$-absorbing structure} is a $k$-graph
$H$ of the following form. Consider an order-$r$ resilient
template $T$ and put externally vertex-disjoint absorbers of order at most $Q$ on each edge of $T$ (that is to say, the absorbers intersect only at their root vertices). We stress that the edges of $T$ are not actually present in $H$, they just describe the relative positions of the absorbers. See \cref{fig:absorbing-structure}.
\end{defn}

\begin{figure}[h]
\begin{center}
\includegraphics{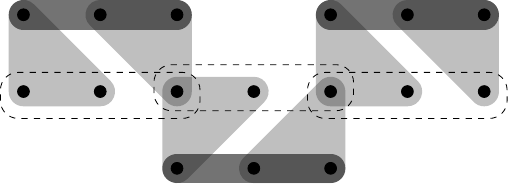}
\end{center}

\caption{\label{fig:absorbing-structure}A cartoon of an absorbing structure. The dashed bubbles indicate template edges, which are not actually a part of the absorbing structure.}

\end{figure}

An absorbing structure
$H$ has the same crucial property as the resilient template $T$
that defines it: if we remove fewer than half of the vertices of the flexible
set $Z$ then what remains of $H$ has a perfect matching. Indeed,
after this removal we can find a perfect matching $M$ of $T$, then
our perfect matching of $H$ can be comprised of the covering matching
of the absorber on each edge of $M$ and the non-covering matching
for the absorber on each other edge of $T$.

We will want to find an absorbing structure $H$ whose flexible set $Z$ has a certain ``richness'' property: essentially, we will want $Z$ to have the property that for all small sets $W$ disjoint from $H$, there is a matching covering $W$ and a small portion of $Z$ (and no other vertices). Sets $Z$ with this property always exist in the setting of \cref{lem:sparse-absorbing-lemma}, as follows.

\begin{lem}
\label{lem:rich-set}Fix positive integers $1\leq d<k$, and fix $\rho,\delta>0$. Then there is $\lambda>0$
such that the following holds. For $p\ge n^{1-k}\log^3 n$, a.a.s.\ $G\sim{\operatorname H}^{k}\left(n,p\right)$ has the
following property. Consider a spanning subgraph $G'\subseteq G$
with $\delta_d(G')\ge \delta p\binom{n-d}{k-d}$ for some $d\ge 1$. Then there is a set $Z$ of $\rho n$ vertices such that for any $W\subseteq V(G)\setminus Z$ with $|W|=\lambda n$, there is a matching $M$ in $G'$ covering all vertices in $W$, each edge of which contains one vertex of $W$ and $k-1$ vertices of $Z$.
\end{lem}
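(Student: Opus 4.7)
The plan is to take $Z$ to be a uniformly random subset of $\rho n$ vertices of $V(G)$, and then to find the desired matching for any $W$ via the Aharoni--Haxell theorem on systems of disjoint representatives (SDRs). Since $d\ge 1$, a standard averaging of the $d$-degree condition yields the $1$-degree condition $\deg_{G'}(\{w\})\ge \delta p\binom{n-1}{k-1}$ for every vertex $w$. Applying \cref{lem:degree-concentration} (with $d=1$, $\mu=0$, $\gamma=\delta$, $\sigma=\rho$), a.a.s.\ the random $Z$ has the property that every $w\in V(G)$ has link $(k-1)$-graph
\[
L_w[Z]:=\bigl\{e\in \textstyle\binom{Z}{k-1}:\{w\}\cup e\in E(G')\bigr\}
\]
of size at least $(\delta/2)p\binom{\rho n-1}{k-1}$. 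In parallel, the typicality properties of $G$ from \cref{sec:concentration} (in particular a uniform upper bound $D_*$ on the max $2$-codegree of $G$, and \cref{lem:delete-vertices-degree-almost-ok}) hold a.a.s.\ and are inherited by any spanning subgraph.

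Now fix such $Z$, and for any $W\subseteq V(G)\setminus Z$ with $|W|=\lambda n$ observe that a matching of the desired form is exactly an SDR for the family $\{L_w[Z]\}_{w\in W}$ of $(k-1)$-graphs on $Z$. By the Aharoni--Haxell theorem \cite{AH00}, such an SDR exists provided
\[
\nu\!\Bigl(\textstyle\bigcup_{w\in W'} L_w[Z]\Bigr)\ge (k-1)(|W'|-1)+1 \quad\text{for every } W'\subseteq W,
\]
where $\nu$ denotes matching number. I would verify this by a greedy matching argument inside the union: the number of distinct edges in $\bigcup_{w\in W'}L_w[Z]$ is at least $\sum_{w\in W'}|L_w[Z]|/\mu$, where $\mu$ is the maximum ``multiplicity'' of a $(k-1)$-set as an edge of some $L_w[Z]$ (bounded by the max $(k-1)$-codegree in $G$), whereas the maximum degree of the union at a vertex $v\in Z$ is at most $\sum_{w\in W'}d_{G'}(\{v,w\})\le |W'|D_*$ (and at most $\binom{\rho n-1}{k-2}$ trivially).

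The main obstacle is to make these estimates strong enough to yield $\nu\ge (k-1)|W'|$ uniformly across the full range of $|W'|$, and in particular when $|W'|$ is of order $\lambda n$ and $p$ is near the threshold $n^{1-k}\log^3 n$; in this sparsest regime a single link $L_w[Z]$ only has polylogarithmically many edges, so the matching really has to be found in the union. I expect to handle this by a case analysis on $|W'|$, splitting roughly at the scales $|W'|\asymp \rho n/D_*$ and $|W'|\asymp 1/p$. In each regime the multiplicity and codegree bounds from \cref{sec:concentration} combine to give the required greedy lower bound on $\nu$, and choosing $\lambda$ sufficiently small in terms of $\rho,\delta,k$ then ensures the Aharoni--Haxell condition is satisfied for every $W'\subseteq W$.
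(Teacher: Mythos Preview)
Your high-level framework coincides with the paper's: choose $Z$ uniformly at random, use \cref{lem:codegree-degree} to pass to a $1$-degree condition, use \cref{lem:degree-concentration} (with $d=1$) so that every vertex has link of size at least $(\delta/2)p\binom{\rho n-1}{k-1}$ into $Z$, and then verify the Aharoni--Haxell criterion for the family $\{L_w[Z]\}_{w\in W}$. So the skeleton is right.

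The gap is in how you propose to check the Aharoni--Haxell condition. Your plan bounds the matching number of $H_{W'}=\bigcup_{w\in W'}L_w[Z]$ greedily via the edge count and the maximum vertex degree, controlling these through pointwise codegree statistics (a max $2$-codegree $D_*$ and a max $(k-1)$-codegree $\mu$). At the bottom of the range $p=n^{1-k}\log^3 n$ this is too crude: your greedy bound gives
\[
\nu(H_{W'})\;\ge\;\frac{e(H_{W'})}{(k-1)\Delta(H_{W'})}\;\ge\;\frac{|W'|\cdot(\delta/2)p\binom{\rho n-1}{k-1}/\mu}{(k-1)\,|W'|\,D_*}\;=\;O\!\left(\frac{p\,n^{k-1}}{\mu D_*}\right),
\]
which is polylogarithmic and, crucially, does not grow with $|W'|$; so it cannot yield $\nu(H_{W'})\ge(k-1)|W'|$ once $|W'|$ exceeds a polylogarithmic threshold. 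Replacing $|W'|D_*$ by the trivial bound $\binom{\rho n-1}{k-2}$ makes things worse, and the proposed splitting scales $|W'|\asymp\rho n/D_*$ and $|W'|\asymp 1/p$ do not rescue this (indeed $1/p\gg n$ here). The lemmas you cite from \cref{sec:concentration} are pointwise or ``almost all'' statements and do not combine to an aggregate bound of the required strength.

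What the paper does instead is replace this vertex-by-vertex codegree accounting with a single aggregate estimate, \cref{lem:hyper-match-concentration}: for every pair of sets $\mathcal I,U$ with $|\mathcal I|\le\lambda n$ and $|U|\le(k-1)^2|\mathcal I|$, the number of edges of $G$ meeting both $\mathcal I$ and $U$ is at most $2\lambda(k-1)^2 p\,|\mathcal I|\,n^{k-1}$ (this is just Chernoff plus a union bound over all such pairs). The contradiction is then immediate and uniform in $|W'|$: if a maximum matching $M$ of $H_{W'}$ had fewer than $(k-1)|W'|$ edges, then $|V(M)|\le (k-1)^2|W'|$, every edge of the (multi-)union $H_{W'}$ meets $V(M)$, and each corresponds to an edge of $G$ with one vertex in $W'$ and one in $V(M)$; so $|W'|(\delta/2)p\binom{\rho n-1}{k-1}\le 2\lambda(k-1)^2 p\,|W'|\,n^{k-1}$, which fails for $\lambda$ small. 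No case analysis is needed. If you swap your codegree step for this aggregate Chernoff lemma, your argument goes through.
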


The proof of \cref{lem:rich-set} is not too difficult, but to avoid interrupting the flow of this section we defer its proof to \cref{subsec:rich-set}. Briefly, the idea is to show that a random set $Z$ typically does the job, using some concentration inequalities and a hypergraph matching criterion due to Aharoni and Haxell.

Having found a rich set $Z$ as guaranteed by \cref{lem:rich-set}, we need to show that $G'$ has an absorbing structure with $Z$ as its flexible set. We will greedily construct such an absorbing structure using the following lemma, which says that absorbers can be found rooted on any triple of vertices, even if a few vertices are
``forbidden''.
\begin{lem}
\label{lem:find-absorbers}Fix positive integers $1\leq d<k$ and some $\gamma>0$.
There is $Q\in{\mathbb N}$ and $C,\sigma>0$ such that the
following holds. For $p$ satisfying $p\ge \max\{n^{-k/2+\gamma},C n^{2-k}\}$, a.a.s.\ $G\sim{\operatorname H}^{k}\left(n,p\right)$ has the following property.
Every spanning subgraph $G'$ of $G$ with minimum $d$-degree at least
$\left(\mu_{d}\left(k\right)+\gamma\right)p\binom{n-d}{k-d}$ has
an absorber of order at most $Q$ rooted on any $k$-tuple of vertices, even
after deleting $\sigma n$ other vertices of $G'$.
\end{lem}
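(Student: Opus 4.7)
The plan is to construct the absorber in two phases: first, build an explicit ``absorber pattern'' $H^*$---a bounded-size linear $k$-graph with designated roots $r_1,\ldots,r_k$ that is itself an absorber on those roots and has sufficiently low $k$-density $m_k(H^*)$---and second, use a contraction argument together with the sparse embedding lemma (\cref{thm:KLR}) to embed $H^*$ into $G'$ with the roots mapped to $x_1,\ldots,x_k$.

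For the first phase, following the non-constructive idea from \cref{sec:outline}, I would fix a large constant $M$ divisible by $k$ with the property that every $k$-graph on $M$ or $M+k$ vertices, with minimum $d$-degree at least $(\mu_d(k)+\gamma/2)$ times the appropriate binomial coefficient, has a perfect matching; such $M$ exists by the definition of $\mu_d(k)$. Take disjoint vertex sets $R=\{r_1,\ldots,r_k\}$ and $A,B$ of size $M$, and define $H^*$ on $R\cup A\cup B$ as the edge-disjoint union of three matchings: a perfect matching $C_1$ of $R\cup A$ with each $r_i$ in its own edge, a perfect matching $C_2$ of $B$, and a perfect matching $N$ of $A\cup B$. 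Then $H^*$ is an absorber rooted on $r_1,\ldots,r_k$, with covering matching $C_1\cup C_2$ and non-covering matching $N$, and it has order $2M$. The three matchings can be produced successively inside a sufficiently dense auxiliary $k$-graph by repeated invocation of the defining property of $M$ (noting that removing the already-chosen matchings changes the minimum $d$-degree by a negligible amount). The delicate point is to arrange the matchings so that $H^*$ is linear and $m_k(H^*)\le 1/(k/2-\gamma)$, as is needed for \cref{thm:KLR} to apply in our range of $p$; this would be achieved by an explicit locally sparse design (or a randomized construction followed by edge-pruning).

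For the second phase, for each root $x_i$ I would greedily find a matching $L_i$ of $\Omega(n)$ edges in the link $(k-1)$-graph of $x_i$ in $G'$, with the $L_i$ pairwise vertex-disjoint and avoiding the $\sigma n$ forbidden vertices. The existence of these matchings uses the minimum-$d$-degree assumption on $G'$ together with \cref{lem:degree-concentration,lem:delete-vertices-degree-ok,lem:delete-vertices-degree-almost-ok}; the hypothesis $p\ge Cn^{2-k}$ enters here to guarantee that the links are sufficiently well-behaved. Next, contract each $\tilde a\in L_i$ to a single new vertex $v_i^{\tilde a}$, yielding a contracted $k$-graph $G'_{\text{cont}}$ which can be viewed as a spanning subgraph of a suitable auxiliary random $k$-graph on a reduced vertex set (following the framework developed in \cite{FK19}). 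Apply \cref{thm:KLR} to $G'_{\text{cont}}$ to locate a copy of the ``contracted pattern'' $H^*_{\text{cont}}$, obtained from $H^*$ by merging each root $r_i$ with the $k-1$ vertices of $A$ appearing with it in its $C_1$-edge, with the vertex playing the role of $r_i$ constrained to $\{v_i^{\tilde a}:\tilde a\in L_i\}$. Un-contracting the resulting embedding yields an absorber of order at most $Q=2M$ in $G'$ rooted on $x_1,\ldots,x_k$, as required.

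The main obstacle is the first phase. Because $p\ge n^{-k/2+\gamma}$ forces $m_k(H^*)\le 1/(k/2-\gamma)\approx 2/k$, and because each non-root vertex of $H^*$ already lies in two matching edges (giving $H^*$ a ``bulk'' density close to $2/k$), any short ``alternating cycle'' within $C_1\cup C_2\cup N$ would yield a subgraph whose $k$-density exceeds the threshold. Designing the three matchings to avoid such locally dense substructures, while still realizing them non-constructively via $\mu_d(k)$ inside a dense host graph, is the central combinatorial issue and is where most of the work in the proof of this lemma lies.
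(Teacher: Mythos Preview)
Your outline follows the paper's strategy---contract at the roots, then use the sparse embedding lemma---and you correctly isolate the central obstacle. But two ingredients are genuinely missing.

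First, your invocation of \cref{thm:KLR} is incomplete. That theorem does not embed a fixed $H^*_{\mathrm{cont}}$ directly into $G'_{\mathrm{cont}}$; it applies only once you exhibit a subgraph of $G'_{\mathrm{cont}}$ lying in $\mathcal G(H^*_{\mathrm{cont}},n,m,p,\varepsilon)$, and that structure comes from the sparse regularity lemma. You must apply regularity to the contracted graph, transfer the degree condition to the dense cluster graph $\mathcal R$ (\cref{lem:transfer-cluster-graph-partition}), and locate the absorber pattern \emph{there}. This also means that ``fixing $H^*$ in advance inside an auxiliary dense $k$-graph'' is the wrong framing: there is no reason that particular $H^*$ should appear in $\mathcal R$ rooted at the clusters you need. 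The paper instead finds a $K$-sparse absorber adaptively inside $\mathcal R$ (\cref{lem:find-subabsorber}), using whatever matchings the Dirac threshold yields there, and prepares KLR in advance for \emph{all} bounded-size linear patterns with $m_k\le 2/k+\gamma/2$.

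Second, and more importantly, you only gesture at how to achieve local sparsity. The paper's device (proof of \cref{lem:find-q-absorber}) is this: take a $q$-regular bipartite graph $F$ of girth at least $K$ (\cref{lem:high-girth-graph}), form the $q$-uniform hypergraph $L$ on $E(F)$ whose hyperedges are the vertex-stars of $F$ (so the two sides of $F$ yield two perfect matchings partitioning $E(L)$), randomly inject $V(L)$ into the dense host, and on the image of each $e\in E(L)$ take a perfect matching via the Dirac threshold. Because distinct edges of $L$ meet in at most one vertex and $L$ has girth at least $K$, the resulting $k$-uniform absorber is $K$-sparse; \cref{lem:contracted-absorber} then converts high girth into the $k$-density bound $2/k+\eta$. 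This high-girth template is precisely the idea your first phase is missing.

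One further technical point: your contraction must ensure that each edge of $G'_{\mathrm{cont}}$ through a contracted vertex corresponds to a \emph{unique} edge of $G'$, or else $G'_{\mathrm{cont}}$ cannot be coupled inside a random $k$-graph with edge probability $p$ and KLR does not apply. The paper enforces this with a partition $U_1,\ldots,U_{k-1}$ of most remaining vertices (\cref{def:contract-G}): an edge through the contracted tuple $(v_1,\ldots,v_{k-1})$ into $U_j$ is retained only if it came from $v_j$, and accordingly the $k-1$ sub-absorbers of the contracted absorber (\cref{def:sub-absorber}) are placed one in each $\mathcal U_j$.
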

The proof of \cref{lem:find-absorbers} is quite involved, and contains the most interesting new ideas in this paper. We defer it to \cref{sec:absorbers}. Finally, we deduce \cref{lem:sparse-absorbing-lemma}.

\begin{proof}[Proof of \cref{lem:sparse-absorbing-lemma}]
Let $G'\subseteq G$ be a spanning subgraph with $\delta_d(G')\ge\left(\mu_{d}\left(k\right)+\gamma\right)p\binom{n-d}{k-d}$. Let $L$ be as in \cref{lem:resilient-template} and let $\sigma,Q$ be as in \cref{lem:find-absorbers}. We may assume $\sigma\le (\gamma/2)^k$. Choose small $\rho$ with $\rho(Q+1)L \le \sigma$.

By \cref{lem:rich-set}, a.a.s.\ we can find a $\rho n$-vertex ``rich''
set $Z$ having the property that for some small $\lambda>0$, every set of $\lambda n$ vertices can be ``matched into'' $Z$. We may assume $\lambda<\rho/(2k)$. By \cref{lem:resilient-template}
there is an order-$\rho n$ resilient template
$T$. Now, an absorbing structure on $T$ would have at most $QL\rho n+L\rho n$ vertices, and since $\rho(Q+1)L \le \sigma$, we can use \cref{lem:find-absorbers} to greedily build a $\left(\rho n,Q\right)$-absorbing structure $H\subseteq G'$ on the template $T$, having at most $(\gamma/2)^k n$ vertices,
with $Z$ as the flexible set. Let $X=V\left(H\right)$.

We now claim that $X$ satisfies the assumptions of the lemma. First, by the assumption $\sigma\le (\gamma/2)^k$, it has size at most $(\gamma/2)^k n$. For the second property, consider any $W\subseteq V\left(G'\right)\backslash X$ with
size at most $\lambda n$, such that $W\cup X$ is divisible by $k$. By the defining property of our rich set $Z$, we can find a matching $M_1$ in $G'$ covering $W$ and $(k-1)|W|\le (k-1)\lambda n<(\rho/2) n$ vertices of $X$. By the special property of our absorbing structure $H$, we can then find a matching $M_2$ covering all the remaining vertices in $G[X\cup W]$, and then $M_1\cup M_2$ is the desired matching.
\end{proof}

\subsection{Constructing an absorbing template}\label{subsec:construct-template}

In this subsection we prove \cref{lem:resilient-template}. We will
build our desired $k$-graph via a simple transformation from a bipartite
graph with certain properties. The following lemma was proved by Montgomery,
and appears as \cite[Lemma~10.7]{Mon19}. We write $\sqcup$ to indicate that
a union of sets is disjoint.

\begin{lem}
\label{lem:montgomery}
For any sufficiently large $s$, there exists a bipartite
graph $R$ with vertex parts $X$ and $Y\sqcup Z$, with $|X|=3s$,
$|Y|=|Z|=2s$, and maximum degree $100$, such that if we remove any
$s$ vertices from $Z$, the resulting bipartite graph has a perfect
matching.
\end{lem}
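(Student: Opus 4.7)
My plan is to prove \cref{lem:montgomery} by the probabilistic method. With $V = Y \sqcup Z$, I first reformulate: for fixed $S \subseteq Z$ with $|S| = s$, the subgraph $R - S$ is bipartite with both parts of size $3s$, so by Hall it has a perfect matching iff every $B \subseteq V \setminus S$ satisfies $|N_R(B)| \ge |B|$. Quantifying over $S$, the property is equivalent to: for every $B \subseteq V$ with $|B \cap Z| \le s$, $|N_R(B)| \ge |B|$. Note such $B$ has $|B| \le 3s = |X|$, so this is a non-trivial constraint.

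\textbf{Construction and verification.} The natural attempt is to have each $v \in V$ independently choose a random $d$-set of neighbors in $X$, for a fixed constant $d \le 50$ (say). A Chernoff bound shows the $X$-side degrees concentrate near $4d/3$, so the max degree stays at most $100$ after redirecting edges of a bounded number of overloaded vertices. I would then union-bound the bad event ``some $B$ with $|B \cap Z| \le s$ fails $|N_R(B)| \ge |B|$'' over $b = |B|$: a fixed $B$ fails only if $N_R(B) \subseteq W$ for some $W \subseteq X$ with $|W| = b-1$, which has probability at most $((b-1)/(3s))^{bd}$. Combined with the counts $\binom{4s}{b}\binom{3s}{b-1}$ and Stirling's estimate, this gives failure probability $\le e^{-\Omega(s)}$ in the regime where $b$ is bounded away from $3s$.

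\textbf{Main obstacle.} The delicate regime is $|B|$ near $3s$, where the condition essentially forces $N_R(B) = X$. A purely random bounded-degree construction cannot guarantee this: each $x \in X$ has only a constant expected number of random neighbors in a typical $B$, so the probability $x$ has no neighbor in $B$ is $\Omega(e^{-cd})$, and union-bounding over $x \in X$ grows with $s$ unless $d$ grows too. The fix is to overlay $R$ with a small constant-degree \emph{deterministic} base graph $R_0$ ensuring $N_{R_0}(Y) = X$ --- i.e., every $x \in X$ has at least one designated neighbor in $Y$. This handles the extreme case $B \supseteq Y$ automatically, and the remaining $B$ with $|Y \setminus B|$ small but nonzero constitute a comparatively thin family that can be handled by a refined counting argument combining the $R_0$-structure with the random edges. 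Summing all regimes yields total failure probability $<1$, so some $R$ with the desired properties exists; balancing the degree budgets spent on $R_0$ and on the random enrichment is what makes the constant $100$ achievable.
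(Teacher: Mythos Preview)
The paper does not prove \cref{lem:montgomery}; it simply cites it from Montgomery~\cite[Lemma~10.7]{Mon19}. So there is no in-paper proof to compare against --- you are attempting to supply one from scratch.

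Your Hall reformulation is correct, the probabilistic scheme is the right general idea (and is essentially how Montgomery proceeds), and the direct union bound does handle the regime $b=|B|\le(3-\varepsilon)s$ once $d$ is a large enough constant. You also correctly locate the real obstruction at $b$ near $3s$.

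The gap is in your fix. An overlay $R_0$ with merely $N_{R_0}(Y)=X$ is not enough, and the claim that the remaining $B$ form a ``comparatively thin family'' is false as stated: for each choice of $Y\setminus B$ there are still $\sim\binom{2s}{s}\approx 4^s$ choices of $B\cap Z$. Even after collapsing over $B\cap Z$ the argument fails. If $R_0$ just gives each $x\in X$ one or two designated $Y$-neighbours, then removing a single vertex $y_1$ from $Y$ already leaves $\Theta(1)$ vertices of $X$ unprotected, and for the sets $B=(Y\setminus\{y_1\})\cup Z'$ with $|Z'|=s$ the bad event ``some such $B$ misses all unprotected vertices'' reduces essentially to ``no vertex of $Y\setminus\{y_1\}$ sends a random edge to the unprotected set'', which has probability $e^{-\Theta(d)}$ (the existence of a suitable $Z'$ is then automatic, since almost all of $Z$ avoids any fixed $O(1)$-set). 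A union bound over the $2s$ choices of $y_1$ already diverges for any constant $d$. What one actually needs is that $R_0$ satisfies a genuine expansion property --- roughly, deleting any $j$ vertices from $Y$ unprotects strictly fewer than $j$ vertices of $X$ over the whole relevant range of $j$ --- and then a careful case analysis interleaving this structure with the random edges across all $b$. That is precisely the content of Montgomery's argument, and your sketch stops short of it.
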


From \cref{lem:montgomery} we can deduce the following lemma (this is a $k$-uniform
version of \cite[Lemma 5.2]{Kwa16}).

\begin{lem}
\label{lem:montgomery-k}
For any sufficiently large $s$, there exists a $k$-partite
$k$-graph $S$ with vertex parts $X_{1},\dots,X_{k-1}$ and $Y\sqcup Z$,
with $|X_{1}|=\dots|X_{k-1}|=3s$, $|Y|,|Z|=2s$, and maximum degree
$100$, such that if we remove any $s$ vertices from $Z$, the resulting
$k$-graph has a perfect matching (provided its number of vertices
is even).
\end{lem}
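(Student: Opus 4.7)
The plan is to construct $S$ by a straightforward ``blow-up'' of the bipartite graph $R$ produced by \cref{lem:montgomery}. First I would apply \cref{lem:montgomery} to obtain such an $R$ with parts $X$ (of size $3s$) and $Y\sqcup Z$, where $|Y|=|Z|=2s$ and the maximum degree is at most $100$. Then, for each $i\in\{1,\dots,k-1\}$, I introduce a disjoint ``copy'' $X_i$ of $X$ together with a bijection $\pi_i\colon X\to X_i$, and keep $Y,Z$ unchanged. The hyperedge set of $S$ will be defined by a single rule: each edge $\{x,y\}$ of $R$ (with $x\in X$, $y\in Y\cup Z$) gives rise to the hyperedge $\{\pi_1(x),\dots,\pi_{k-1}(x),y\}$ of $S$. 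This manifestly makes $S$ a $k$-partite $k$-graph with the advertised parts, and each vertex of $S$ inherits its degree from the corresponding vertex of $R$, so the maximum degree is still at most $100$.

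For the resilience property, I would fix any set $Z_0\subseteq Z$ with $|Z_0|=s$ and argue as follows. By \cref{lem:montgomery}, $R-Z_0$ has a perfect matching $M_R$, necessarily of size $3s=|X|=|Y\sqcup(Z\setminus Z_0)|$. Lifting each edge $\{x,y\}\in M_R$ to the hyperedge $\{\pi_1(x),\dots,\pi_{k-1}(x),y\}$ produces a set $M_S$ of $3s$ disjoint hyperedges in $S-Z_0$: because $M_R$ uses each $x\in X$ exactly once and each $\pi_i$ is a bijection, each $X_i$ is covered exactly once; and because $M_R$ saturates $Y\sqcup(Z\setminus Z_0)$, so does $M_S$. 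Hence $M_S$ is a perfect matching of $S-Z_0$. The ``divisibility'' hypothesis in the conclusion is in fact automatic here: after removing $s$ vertices the total vertex count of $S$ is $(k-1)\cdot 3s+3s=3ks$, which is always divisible by $k$.

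There is no real obstacle, because all of the hard combinatorics is bundled into \cref{lem:montgomery}. The only point that deserves a moment's thought is the design of the lift: by forcing every hyperedge of $S$ to have its $X_i$-coordinates be the images of \emph{the same} base vertex $x\in X$ under the various $\pi_i$, a matching in $R$ translates into a matching in $S$ with no extra work. A more permissive construction (where the $X_i$-coordinates of a hyperedge could come from independently chosen base vertices) would decouple the parts and destroy this clean correspondence, so the rigidity of the lift is the key feature.
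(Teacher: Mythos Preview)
Your proposal is correct and is essentially the same construction as the paper's: the paper phrases it as adding perfect matchings between consecutive copies $X_i,X_{i+1}$ and taking $k$-vertex ``special paths'' as hyperedges, but since those matchings are bijections this is exactly your lift $\{x,y\}\mapsto\{\pi_1(x),\dots,\pi_{k-1}(x),y\}$. Your observation that the divisibility hypothesis is automatic is also correct.
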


\begin{proof}
Consider the bipartite graph $R$ from \cref{lem:montgomery} on the vertex
set $X\sqcup\left(Y\sqcup Z\right)$, and let $X_{k-1}=X$. Obtain a $k$-partite
graph $R'$ by adding sets $X_{1},\dots,X_{k-2}$ each having $\left|X\right|$
new vertices, and for each $1\le i<k-1$ putting an arbitrary perfect
matching between $X_{i}$ and $X_{i+1}$. Now, our $k$-partite $k$-graph
$S$ has the same vertex set as $R'$, and an edge for every $k$-vertex
path running through $X_{1},\dots,X_{k-1},Y\sqcup Z$ (call such paths
\emph{special paths}). Note that an edge in $R$ can be uniquely extended
to a special path in $R'$. Moreover, a matching in $R$ can always
be uniquely extended to a vertex-disjoint union of special paths in
$R'$.
\end{proof}

We also need the following simple lemma showing that there are sparse hypergraphs with no large independent sets.

\begin{lem}
\label{lem:random-independent-set}
For any $k$ there is some $K\in\mathbb{N}$ such that
the following holds. For sufficiently large $r$ there is a $k$-graph
$G$ with $r$ vertices and at most $Kr$ edges, with no independent
set of size $r/2$.
\end{lem}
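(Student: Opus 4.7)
The plan is to use a standard probabilistic construction. I would let $G \sim {\operatorname H}^{k}(r,p)$ with $p = C/r^{k-1}$ for a sufficiently large constant $C = C(k)$ chosen at the end, and show that with positive probability this random $k$-graph simultaneously has $O(r)$ edges and no independent set of size $r/2$.

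First, the expected number of edges is $p\binom{r}{k} \le Cr/k!$, so by Markov's inequality, with probability at least $1/2$ we have $e(G) \le 2Cr/k!$. Second, for any fixed vertex set $S$ of size $r/2$, the probability that $S$ spans no edge of $G$ is $(1-p)^{\binom{r/2}{k}} \le \exp\bigl(-p\binom{r/2}{k}\bigr)$. Since $\binom{r/2}{k} = \Theta(r^k)$, this quantity is bounded by $\exp(-\alpha Cr)$ for some absolute constant $\alpha = \alpha(k) > 0$ and all sufficiently large $r$. A union bound over the $\binom{r}{r/2} \le 2^r$ possible sets $S$ shows that the probability of there being an independent set of size $r/2$ is at most $2^r \exp(-\alpha Cr)$, which is less than $1/2$ once $C$ is large enough in terms of $k$.

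Combining these two estimates, a union bound gives a positive probability that both events hold simultaneously, so there exists a valid $G$ with at most $K r$ edges (for $K = 2C/k!$) and no independent set of size $r/2$. This completes the plan.

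There is essentially no obstacle here; the only thing to verify carefully is the lower bound $\binom{r/2}{k} \ge c_k r^k$ to make $p\binom{r/2}{k}$ grow linearly in $r$, which is routine for large $r$. The construction is entirely standard and analogous to the classical probabilistic existence proofs of hypergraphs with high independence number lower bounds (e.g.\ in Ramsey-theoretic contexts).
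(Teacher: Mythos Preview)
Your proposal is correct and essentially identical to the paper's proof: both take $G\sim{\operatorname H}^{k}(r,p)$ with $p=\Theta(r^{1-k})$, use a union bound over the at most $2^r$ subsets of size $r/2$ to rule out large independent sets, and a simple concentration/tail bound to control the edge count. The only cosmetic differences are that the paper uses the Chernoff bound (rather than Markov) for the edge count and parameterises $p$ as $(K/2)r/\binom{r}{k}$ rather than $C/r^{k-1}$.
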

\begin{proof}
Consider a random $k$-graph $G\sim{\operatorname H}^{k}\left(r,p\right)$ for $p=\left(K/2\right)r/\binom{r}{k}$.
For a set of $r/2$ vertices, the probability that there are no edges in that set is $(1-p)^{\binom{r/2}{k}}=e^{-\Omega(Kr)}$, so for large
$K$, the union bound shows that a.a.s.\ every
set of size $r/2$ induces at least one edge. Also, the Chernoff bound
shows that a.a.s.\ $G$ has at most $Kn$ edges.
\end{proof}

We are now ready to prove \cref{lem:resilient-template}.

\begin{proof}[Proof of \cref{lem:resilient-template}]
Start with the $k$-graph $S$ from \cref{lem:montgomery-k}, with $s=\ceil{r/2}$, and delete at most one vertex from $Z$ to make it have size $r$. Then, consider an $r$-vertex $k$-graph $G$ as in \cref{lem:random-independent-set} (which exists as
long as $r$ is large enough), and place $G$ on the vertex set $Z$.
Let $T$ be the resulting $k$-graph. It has at most $(k-1)(3s)+2s+2s$ vertices and at most $100(4s)+K(2s)$ edges.

Now, consider any set $W$ of fewer than $r/2$ vertices of $Z$,
such that the number of vertices in $T-W$ is divisible by $k$. By
the defining property of $G$, we can greedily build a matching $M_{1}$
in $T\left[Z\backslash W\right]$ covering all but $s$ vertices,
and then by the defining property of $S$, there is a perfect matching
$M_{2}$ in $T-\left(W\cup V\left(M_{1}\right)\right)$. Then $M_{1}\cup M_{2}$
is the desired perfect matching of $T-W$.
\end{proof}

\subsection{Finding a rich set of vertices}\label{subsec:rich-set}

In this subsection we prove \cref{lem:rich-set}. We will make use of the following Hall-type theorem for finding
large matchings in hypergraphs, due to Aharoni and Haxell~\cite{AH00}.
\begin{thm}
\label{thm:hyper-match}Let $\left\{ L_{1},\dots,L_{t}\right\} $
be a family of $k'$-uniform hypergraphs on the same vertex set. If,
for every $\mathcal{I}\subseteq\left\{ 1,\dots,t\right\} $, the hypergraph
$\bigcup_{i\in\mathcal{I}}L_{i}$ contains a matching of size greater
than $k'\left(\left|\mathcal{I}\right|-1\right)$, then there exists
a function $g:\left\{ 1,\dots,t\right\} \rightarrow\bigcup_{i=1}^{t}E\left(L_{i}\right)$
such that $g\left(i\right)\in E\left(L_{i}\right)$ and $g\left(i\right)\cap g\left(j\right)=\emptyset$
for $i\neq j$.
\end{thm}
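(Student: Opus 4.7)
The conclusion is precisely a ``rainbow matching'' selecting one edge from each $L_{i}$ with all selections pairwise vertex-disjoint, so the natural framework is the topological approach to transversal problems developed by Aharoni, Haxell, Berger, and Meshulam. The plan is to combine two ingredients: a topological connectivity estimate of Meshulam for matching complexes, and a topological ``colourful'' Hall theorem.

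\textbf{Two ingredients.} For a $k'$-uniform hypergraph $H$, let $\mathcal{M}(H)$ be the abstract simplicial complex with vertex set $E(H)$ whose simplices are the matchings of $H$. The first ingredient is Meshulam's bound: $\mathcal{M}(H)$ is topologically $\bigl(\lceil\nu(H)/k'\rceil-2\bigr)$-connected, where $\nu(H)$ is the matching number. The second ingredient is a colourful Hall theorem of Aharoni--Berger (derived from Meshulam's nerve lemma): whenever $K$ is a simplicial complex whose vertex set is partitioned as $V=V_{1}\sqcup\cdots\sqcup V_{t}$ and every induced subcomplex $K\bigl[\bigcup_{i\in\mathcal{I}}V_{i}\bigr]$ is $(|\mathcal{I}|-2)$-connected, the complex $K$ must contain a simplex meeting every $V_{i}$.

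\textbf{Assembly.} I would take $V_{i}$ to be a disjoint copy of $E(L_{i})$, set $V=\bigsqcup_{i}V_{i}$, and let $K$ be the simplicial complex on $V$ whose faces are collections of edges that are pairwise vertex-disjoint in the common underlying vertex set. Up to the innocuous relabelling of edges that happen to appear in several $L_{i}$ simultaneously, the induced subcomplex $K\bigl[\bigcup_{i\in\mathcal{I}}V_{i}\bigr]$ is exactly the matching complex $\mathcal{M}\bigl(\bigcup_{i\in\mathcal{I}}L_{i}\bigr)$. The hypothesis $\nu\bigl(\bigcup_{i\in\mathcal{I}}L_{i}\bigr)>k'(|\mathcal{I}|-1)$ gives $\lceil\nu/k'\rceil\ge|\mathcal{I}|$, so by the first ingredient this subcomplex is $(|\mathcal{I}|-2)$-connected. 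The second ingredient then produces a colourful simplex, which is a choice of one $g(i)\in E(L_{i})$ for each $i\in[t]$ with the $g(i)$ pairwise disjoint --- exactly the function required by the theorem.

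\textbf{Main obstacle.} The delicate step is Meshulam's connectivity bound for $\mathcal{M}(H)$. My plan is induction on $|E(H)|$ via a Mayer--Vietoris decomposition: fix any edge $e\in E(H)$ and write $\mathcal{M}(H)$ as the union of the (contractible) star of $e$ with $\mathcal{M}(H-e)$, whose intersection is $\mathcal{M}\bigl(H[V(H)\setminus V(e)]\bigr)$. Since removing the edge $e$ decreases $\nu$ by at most $1$ while removing its $k'$ vertices decreases $\nu$ by at most $k'$, the inductive connectivity estimates on these two subhypergraphs combine through the Mayer--Vietoris sequence to yield the desired $\bigl(\lceil\nu(H)/k'\rceil-2\bigr)$-connectivity of $\mathcal{M}(H)$; it is precisely this arithmetic that forces the divisor $k'$. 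The colourful Hall theorem is comparatively standard once the connectivity bound is in hand, so the proof then reduces to a clean application of both ingredients.
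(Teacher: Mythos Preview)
The paper does not prove this statement at all: \cref{thm:hyper-match} is quoted as a result of Aharoni and Haxell~\cite{AH00} and used as a black box in the proof of \cref{lem:rich-set}. So there is nothing in the paper to compare your argument against; you are reconstructing the proof of a cited external theorem, which the authors deliberately omit.

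That said, your outline is essentially the correct approach to the Aharoni--Haxell theorem, and is close to how the original paper (and subsequent expositions via Meshulam's methods) proceed. One point where your sketch is too quick: in the inductive Mayer--Vietoris/link-deletion step, you cannot simply pick an arbitrary edge $e$. Removing a single edge may drop $\nu$ by $1$, and then $\lceil(\nu-1)/k'\rceil$ can be strictly smaller than $\lceil\nu/k'\rceil$, so the inductive bound on $\mathcal M(H-e)$ falls one short of what you need. The standard fix is a case distinction: either some edge $e$ has $\nu(H-e)=\nu(H)$, in which case the induction goes through as you wrote; or every edge lies in every maximum matching, which forces $H$ itself to be a matching, so $\mathcal M(H)$ is a full simplex and hence contractible. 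With that repair your argument is the classical one.
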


To apply \cref{thm:hyper-match}, the following lemma will be useful, concerning the distribution of edges in random hypergraphs.

\begin{lem}
\label{lem:hyper-match-concentration}
Fix $k,q\in\mathbb{N}$ and $\lambda>0$, and suppose $p=\omega\left(n^{1-k}\log n\right)$.
Then a.a.s.\ $G\sim \operatorname{H}^{k}\left(n,p\right)$ has the
following property. For every pair of vertex sets $\mathcal{I},U$
with $\left|\mathcal{I}\right|\le\lambda n$ and $\left|U\right|\le q\left|\mathcal{I}\right|$,
there are at most $2\lambda qp\left|\mathcal I\right|n^{k-1}$ edges which contain
a vertex from $\mathcal{I}$ and a vertex from $U$.
\end{lem}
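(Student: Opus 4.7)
The plan is to fix a pair $(\mathcal{I}, U)$, bound the count of edges meeting both sets by a one-sided Chernoff estimate, and then control the failure probability by a union bound over all such pairs. The key observation that makes the union bound work is that the target bound $2\lambda q p |\mathcal{I}| n^{k-1}$ is exactly twice (up to a harmless factorial) the expected number of such edges, and that the assumption $p = \omega(n^{1-k}\log n)$ forces $pn^{k-1}$ to be superlogarithmic, which will dominate the number of choices of $(\mathcal{I}, U)$.

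First I would fix sets $\mathcal{I}$ and $U$ with $i := |\mathcal{I}|$ and $u := |U| \le qi$, and let $X_{\mathcal{I},U}$ be the number of edges of $G \sim \mathrm{H}^k(n,p)$ with a vertex in $\mathcal{I}$ and (another) vertex in $U$. Each such edge is picked up by choosing its representative in $\mathcal{I}$ (at most $i$ ways), its representative in $U$ (at most $u$ ways), and the remaining $k-2$ vertices (at most $\binom{n-2}{k-2}$ ways), so
\[
\mathbb{E}[X_{\mathcal{I},U}] \le iu\binom{n-2}{k-2}p \le qi^2\binom{n-2}{k-2}p \le q\lambda i p \cdot \frac{n^{k-1}}{(k-2)!} \le \lambda q p i n^{k-1},
\]
using $i \le \lambda n$ in the third inequality. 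Thus the target $T := 2\lambda q p i n^{k-1}$ satisfies $T \ge 2\mathbb{E}[X_{\mathcal{I},U}]$.

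Next I would apply a standard Chernoff bound. Writing $X_{\mathcal{I},U}$ as a sum of independent Bernoulli indicators (one for each $k$-set meeting both $\mathcal{I}$ and $U$) and using the upper-tail inequality in the regime $T \ge 2\mathbb{E}[X_{\mathcal{I},U}]$, we get
\[
\Pr\bigl(X_{\mathcal{I},U} \ge T\bigr) \le \exp\bigl(-c T\bigr) = \exp\bigl(-2c\lambda q p i n^{k-1}\bigr)
\]
for some universal $c > 0$. Since $p = \omega(n^{1-k}\log n)$ gives $pn^{k-1} = \omega(\log n)$, the right-hand side is $\exp(-\omega(i\log n))$.

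Finally, I would union-bound over all admissible pairs $(\mathcal{I},U)$. For fixed $i$, the number of choices is at most $\binom{n}{i}\sum_{u \le qi}\binom{n}{u} \le n^{(q+1)i+1}$, and then summing over $1 \le i \le \lambda n$ gives a total failure probability of
\[
\sum_{i=1}^{\lambda n} n^{(q+1)i+1}\exp\bigl(-\omega(i\log n)\bigr) = o(1),
\]
as the exponential beats $n^{(q+1)i}$ term-by-term for every $i \ge 1$. The only mildly delicate step is the last one: we must be sure the Chernoff saving $\exp(-\omega(i\log n))$ outpaces the $n^{O(i)}$ cost of the union bound uniformly in $i$, which is where the hypothesis $p = \omega(n^{1-k}\log n)$ is used in a crucial way. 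Everything else is a routine double-counting calculation.
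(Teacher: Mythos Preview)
Your proof is correct and follows essentially the same route as the paper's: fix $\mathcal{I},U$, bound the expected number of edges meeting both by $\lambda q p|\mathcal{I}|n^{k-1}$, apply Chernoff to get failure probability $\exp(-\Omega(p|\mathcal{I}|n^{k-1}))$, and union-bound over $\sum_i n^{(q+1)i}$ choices, using $pn^{k-1}=\omega(\log n)$ to make the sum $o(1)$. The paper's version is just a terser rendering of the same argument.
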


\begin{proof}
Fix $\mathcal{I},U$ as in the lemma statement. By the Chernoff
bound, the number of edges intersecting $\mathcal{I}$ and $U$ is
at most $2\lambda qp\left|\mathcal I\right|n^{k-1}$ , with probability $1-\exp\left(-\Omega\left(p\left|\mathcal{I}\right|n^{k-1}\right)\right)$.
So, by the union bound, the probability that the property in the lemma
statement fails is at most
\[
\sum_{i=1}^{\lambda n}n^{i}n^{qi}\exp\left(-\Omega\left(pin^{k-1}\right)\right)=n^{-\omega\left(1\right)}.\tag*{\qedhere}
\]
\end{proof}

We will also need the (very simple) fact that minimum $d$-degree assumptions are strongest when $d$ is large.

\begin{lem}
\label{lem:codegree-degree}
Let $G$ be an $n$-vertex $k$-graph. If $d\ge d'$ and $\delta_d(G)\ge \alpha \binom{n-d}{k-d}$ then $\delta_{d'}(G)\ge \alpha \binom{n-d'}{k-d'}$.
\end{lem}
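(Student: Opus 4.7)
The plan is to prove this by a simple double-counting argument, using the hypothesis on $d$-degrees to bound $d'$-degrees from below. Fix any $d'$-set $S' \subseteq V(G)$; our goal is to show $\deg_G(S') \ge \alpha \binom{n-d'}{k-d'}$. I would count pairs $(S,e)$ where $S$ is a $d$-set of vertices with $S' \subseteq S$ and $e \in E(G)$ is an edge with $S \subseteq e$.

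Counting one way, for each of the $\binom{n-d'}{d-d'}$ $d$-sets $S$ extending $S'$, the hypothesis $\delta_d(G) \ge \alpha \binom{n-d}{k-d}$ provides at least $\alpha \binom{n-d}{k-d}$ edges containing $S$, hence at least
\[
\binom{n-d'}{d-d'} \cdot \alpha \binom{n-d}{k-d}
\]
valid pairs $(S,e)$. Counting the other way, each edge $e \in E(G)$ containing $S'$ contributes exactly $\binom{k-d'}{d-d'}$ pairs, namely one for each $d$-set $S$ with $S' \subseteq S \subseteq e$. So the total pair count also equals $\deg_G(S') \cdot \binom{k-d'}{d-d'}$.

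Combining the two counts gives $\deg_G(S') \ge \alpha \binom{n-d'}{d-d'} \binom{n-d}{k-d} / \binom{k-d'}{d-d'}$. To conclude, I would invoke the standard identity
\[
\binom{n-d'}{d-d'} \binom{n-d}{k-d} = \binom{n-d'}{k-d'} \binom{k-d'}{d-d'},
\]
which follows directly from expanding both sides as ratios of factorials (alternatively, both sides count the number of ways to choose a $k$-set $e$ containing $S'$ together with a distinguished $d$-set $S$ with $S' \subseteq S \subseteq e$). Substituting yields $\deg_G(S') \ge \alpha \binom{n-d'}{k-d'}$, and since $S'$ was arbitrary the bound on $\delta_{d'}(G)$ follows. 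There is no real obstacle here beyond verifying the binomial identity; the lemma is purely combinatorial bookkeeping.
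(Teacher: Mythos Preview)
Your proof is correct and is essentially the same double-counting argument as the paper's: fix a $d'$-set, sum the $d$-degrees over all $d$-sets extending it, and divide by the overcount factor $\binom{k-d'}{d-d'}$, then simplify via the binomial identity you state. The only difference is cosmetic---you phrase it explicitly as counting pairs $(S,e)$, whereas the paper leaves this implicit.
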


\begin{proof}
Suppose that $\delta_d(G)\ge \alpha \binom{n-d}{k-d}$, and fix any subset $S$ of $d'$ vertices of $G$. By assumption, for every subset $X\subseteq V(G)\setminus S$ of size $d-d'$, the number of edges containing $X\cup S$ is at least $\alpha \binom{n-d}{k-d}$. Since each edge containing $S$ is being counted exactly $\binom{k-d'}{d-d'}$ times, we conclude that 
\[\delta_{d'}(G)\geq \frac{\binom{n-d'}{d-d'}\alpha \binom{n-d}{k-d}}{\binom{k-d'}{d-d'}}=\alpha \binom{n-d'}{k-d'}.\tag*{\qedhere}\]
\end{proof}

Now, we prove \cref{lem:rich-set}.

\begin{proof}[Proof of \cref{lem:rich-set}]
Fix an outcome of $G$ that satisfies the property in \cref{lem:hyper-match-concentration}, for $q=(k-1)^2$ and small $\lambda>0$ to be determined, and also satisfies the property in \cref{lem:degree-concentration}, for $\mu=\gamma=\delta/2$ and $d=1$. Consider a spanning subgraph $G'\subseteq G$ with minimum $1$-degree at least $\delta p\binom{n-1}{k-1}$. Let $Z$ be a set of $\rho n$ vertices such that every vertex outside $Z$ has degree at least $\left(\delta/2\right)p\binom{\rho n-1}{k-1}$ into $Z$ (by \cref{lem:codegree-degree,lem:degree-concentration}, almost every choice of $Z$ will do).

Now, consider any $W\subseteq V\left(G'\right)\backslash Z$ with
size $\lambda n$. For each $w\in W$ let $L_{w}$
be the link $\left(k-1\right)$-graph of $w$ into $Z$ (having an
edge $e\subseteq Z$ whenever $e\cup\left\{ w\right\} $ is an edge
of $G'$). We claim that if $\lambda$ is sufficiently small then for each $\mathcal{I}\subseteq W$, the $(k-1)$-graph $H_{\mathcal I}:=\bigcup_{w\in\mathcal{I}}L_{w}$ has a matching of size greater than $\left(k-1\right)\left(\left|\mathcal{I}\right|-1\right)$. The desired result will then follow from \cref{thm:hyper-match}. Actually, it will be convenient to view $H_{\mathcal I}$ as a multigraph (if an edge appears in multiple different $L_w$, among $w\in \mathcal I$, then we include that edge multiple times). This does not affect the existence of matchings, but the correspondence between edges of $H_{\mathcal I}$ and edges of $G'$ will be more natural.

To prove the claim, suppose for the purpose of contradiction that there is some $\mathcal{I}\subseteq W$ for which a maximum matching $M$ in $H_{\mathcal I}$ has size at most $\left(k-1\right)\left(\left|\mathcal{I}\right|-1\right)$ (meaning that it has at most $(k-1)^2(|\mathcal I|-1)$ vertices). By the degree condition defining $Z$, $H_{\mathcal I}$ has at least $|\mathcal I|(\delta/2)p\binom{\rho n-1}{k-1}$ edges. On the other hand, all edges of $H_{\mathcal I}$ intersect $V(M)$ by maximality, so by the property in \cref{lem:hyper-match-concentration}, $H_{\mathcal I}$ only has at most $2\lambda qp\left|\mathcal{I}\right| n^{k-1}$ edges. This is a contradiction if $\lambda$ is sufficiently small.
\end{proof}

\section{Finding absorbers}
\label{sec:absorbers}

Now we are finally ready to prove \cref{lem:find-absorbers}, showing that dense subgraphs of random graphs have absorbers and completing the proof of \cref{lem:sparse-absorbing-lemma}.

The main difficulty with finding absorbers is that they are \emph{rooted} objects. It is not enough to find an absorber floating somewhere in our graph (which we could easily do with the sparse embedding lemma); what we want is to find an absorber on a specific $k$-tuple of vertices. In order to achieve this, we define a contraction operation that reduces the task of finding a rooted absorber to the task of finding ``contracted absorbers'' in a much more flexible setting.

\begin{defn}[contractible absorbers]
\label{def:sub-absorber}A \emph{contractible absorber} rooted on a $k$-tuple of vertices $\left(x_{1},\dots,x_{k}\right)$
is an absorber obtained in the following way. Put $k$ disjoint
edges $e_{1}=\left\{ x_{1},y_{1}^{1},\dots,y_{1}^{k-1}\right\} ,\dots,e_{k}=\left\{ x_{k},y_{k}^{1},\dots,y_{k}^{k-1}\right\} $,
then for each $i$ put an externally vertex disjoint absorber
$H_{i}$ rooted on $\left\{ y_{1}^{i},\dots,y_{k}^{i}\right\}$ (we call each of these a \emph{sub-absorber}). Note that the edges in the non-covering matching of a contractible absorber come from the covering matchings of its constituent sub-absorbers, and the non-rooted edges in the covering matching of the contractible absorber come from the non-covering matchings of its sub-absorbers.

A \emph{contracted absorber} rooted at $\left(x_{1},\dots,x_{k}\right)$ is a hypergraph obtained as the union of $k-1$ absorbers (which we again call sub-absorbers) each rooted at $\left(x_{1},\dots,x_{k}\right)$, disjoint except for their rooted vertices. One can show that a contracted absorber is always itself an absorber, but we will not need this fact. The \emph{contraction} of a contractible absorber is the contracted absorber obtained by contracting each of its rooted edges to a single vertex. See \cref{fig:absorber}.
\end{defn}

\begin{figure}[h]
\begin{center}
\includegraphics{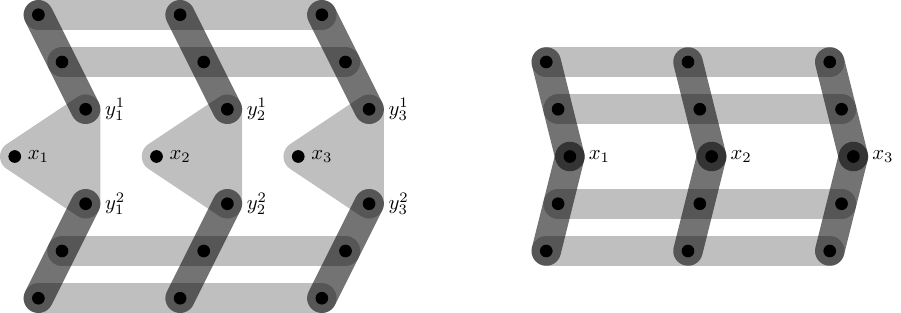}
\end{center}

\caption{\label{fig:absorber}An illustration of an order-18 contractible absorber rooted
on $\left(x_{1},x_{2},x_{3}\right)$, and its contraction. In this 3-uniform case the absorber
construction involves two sub-absorbers. The dark hyperedges are the non-covering
matching of the contractible absorber and the light hyperedges are the covering matching.}
\end{figure}

We will need our absorbers to satisfy a local sparsity condition to apply the sparse embedding lemma. We recall the definition of the girth of a hypergraph, and introduce a very closely related notion of local sparsity.
\begin{defn}\label{def:berge}
A (Berge) \emph{cycle} in a hypergraph is a sequence of edges $e_1,\dots,e_\ell$ such that there exist distinct vertices $v_1,\dots,v_\ell$ with $v_i\in e_i\cap e_{i+1}$ for all $i$ (where $e_{\ell+1}=e_1$). The \emph{length} of such a cycle is its number of edges $\ell$. The \emph{girth} of a hypergraph is the length of the shortest cycle it contains (if the hypergraph contains no cycle we say it has infinite girth, or is \emph{acyclic}). We say that an absorber rooted on $x_1,\dots,x_k$ is \emph{$K$-sparse} if it has girth at least $K$, even after adding the extra edge $\{x_1,\dots,x_k\}$ (that is to say, it has high girth and moreover the roots are ``far from each other'').
\end{defn}

(The fact that the roots are ``far from each other'' will allow us to ``glue together'' absorbers in various ways without worrying about the girth increasing). Recall the definition of the $k$-density $m_k(H)$ from \cref{def:3-density}.

\begin{lem}
\label{lem:contracted-absorber}Fix $\eta>0$ and $k\in{\mathbb N}$. There
is $K>0$ such that the following holds.
For any $k$-uniform contracted absorber $H$ with girth at least $K$, we have
$m_{k}\left(H\right)\le2/k+\eta$.
\end{lem}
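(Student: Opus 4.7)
The key structural observation is that a contracted absorber $H$ has a very restricted degree sequence. Recall that an (ordinary) absorber is by definition the edge-disjoint union of a covering matching and a non-covering matching, so each of its $k$ rooted vertices has degree $1$ (being in the covering matching but not the non-covering one), while every non-rooted vertex has degree exactly $2$. Since a contracted absorber is the union of $k-1$ such sub-absorbers sharing only their root vertices, I conclude that in $H$ each root has degree $k-1$ and every non-root has degree exactly $2$. For any subgraph $H' \subseteq H$ with $v' := v(H') > k$ vertices, the handshake identity then yields (assuming $k \ge 3$, so that the bound is maximised by including all roots in $V(H')$)
\[
k\,e(H') \;=\; \sum_{v\in V(H')}\deg_{H'}(v) \;\le\; k(k-1) + 2(v'-k) \;=\; 2v' + k(k-3),
\]
which rearranges to $(e(H')-1)/(v'-k) \le 2/k + (k-2)/(v'-k)$.

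The proof now splits into two regimes, and I will choose $K$ large (depending on $k$ and $\eta$) so that together they cover every admissible subgraph. In the \emph{large} regime $v'-k \ge (k-2)/\eta$, the displayed inequality already gives $(e(H')-1)/(v'-k) \le 2/k + \eta$. In the \emph{small} regime, $v'$ is bounded by an explicit constant $C_0(k,\eta)$, and the handshake bound in turn forces $e(H') \le C_1(k,\eta)$. Choosing $K > C_1$, any Berge cycle in $H'$ would require at least $K > e(H')$ edges, which is impossible; hence $H'$ is Berge-acyclic. A linear Berge-acyclic $k$-graph is a hyperforest, so $e(H') \le (v'-1)/(k-1)$, and then $(e(H')-1)/(v'-k) \le 1/(k-1) \le 2/k$, the last inequality holding for every $k \ge 2$. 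The case $k=2$ falls into this second regime automatically, since a contracted absorber is then just an odd path and hence Berge-acyclic.

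No significant obstacle is anticipated: the entire argument reduces to the handshake identity, the girth hypothesis (used only via the observation that a subgraph with fewer than $K$ edges contains no Berge cycle), and elementary arithmetic. The only care required is in selecting the constants in the correct order ($\eta \to C_0 \to C_1 \to K$), which is transparent once the two-case split is in place.
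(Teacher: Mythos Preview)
Your proof is correct and follows essentially the same approach as the paper: both use the handshake bound from the degree structure (roots have degree $k-1$, non-roots degree $2$), the hyperforest bound $e\le (v-1)/(k-1)$ for Berge-acyclic linear subgraphs, and the girth hypothesis to tie the two together. The only cosmetic difference is that the paper splits into the cases ``$H'$ acyclic'' versus ``$H'$ contains a cycle (hence $v(H')\ge (k-1)K$)'', whereas you split by the size of $v(H')$ and then use the girth to force acyclicity in the small case; these are logically equivalent reorganisations of the same argument. Your remark that for $k=2$ the contracted absorber is ``just an odd path'' is slightly imprecise (the definition allows extra even cycles), but your small-regime acyclicity argument still goes through verbatim there, so the conclusion stands.
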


The proof of \cref{lem:contracted-absorber} is basically just a calculation so we defer it to \cref{subsec:locally-sparse}.

We will also need the following lemma, showing that we can use the
definition of $\mu_{d}(k)$ itself to find locally sparse absorbers.
We will apply this lemma to a cluster $k$-graph obtained via the
sparse regularity lemma.

\begin{lem}
\label{lem:find-subabsorber}For any positive integers $d<k$ and any $\eta,K>0$, there are $\delta,M>0$ such that the following holds for sufficiently large $n$. Consider a $k$-graph
$G$ on $n$ vertices such that all but $\delta\binom{n}{d}$
$d$-sets of vertices have degree at least $\left(\mu_{d}\left(k\right)+\eta\right)\binom{n-d}{k-d}$.
Then for any vertices $x_{1},\dots,x_{k}$ which each have degree
at least $\eta\binom{n-1}{k-1}$, there is a $K$-sparse absorber with order
at most $M$, rooted on those vertices.
\end{lem}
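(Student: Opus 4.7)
The plan is to build the absorber $H$ on $\{x_1,\dots,x_k\}\cup S$ with $|S|=M$, where $M=M(d,k,\eta,K)$ is a sufficiently large constant divisible by $k$. The main novelty is that the two matchings constituting the absorber will be produced \emph{non-constructively}: I will identify induced subgraphs satisfying the Dirac-type threshold and invoke the very definition of $\mu_d(k)$ to assert that perfect matchings exist in them, without ever naming such matchings explicitly.

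The construction proceeds in three parts. First, using $\deg_G(x_i)\ge\eta\binom{n-1}{k-1}$ together with Markov's inequality (applied to the set of vertices that lie in many of the at most $\delta\binom{n}{d}$ bad $d$-sets), I would greedily select $k$ pairwise disjoint rooted edges $e_i\ni x_i$ whose non-root vertices $Y:=\bigcup_i e_i\setminus\{x_1,\dots,x_k\}$, of size $k(k-1)$, lie entirely in the ``good'' region. Second, I would extend $Y$ to a set $S$ of size $M$ by appending a uniformly random set $W$ of $M-k(k-1)$ fresh vertices. Two applications of \cref{lem:random-subset-degrees} (once to $S$, once to $S\setminus Y$) show that, for $\delta$ small enough and $M$ large enough, with positive probability both $G[S]$ and $G[S\setminus Y]$ have minimum $d$-degree at least $(\mu_d(k)+\eta/2)\binom{M-d}{k-d}$ and $(\mu_d(k)+\eta/2)\binom{M-k(k-1)-d}{k-d}$, respectively. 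Third, fixing such an $S$, the definition of $\mu_d(k)$ yields perfect matchings $N$ of $G[S]$ and $C^\ast$ of $G[S\setminus Y]$. The absorber has vertex set $\{x_1,\dots,x_k\}\cup S$ and edge set $\{e_1,\dots,e_k\}\cup C^\ast\cup N$; the covering matching is $\{e_1,\dots,e_k\}\cup C^\ast$ (a genuine matching because $\{e_i\}$ and $C^\ast$ live on disjoint vertex sets $\{x_1,\dots,x_k\}\cup Y$ and $S\setminus Y$), and the non-covering matching is $N$, with $N$ perturbed, if necessary, to be edge-disjoint from $C^\ast$ using the abundance of perfect matchings of $G[S]$.

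The main obstacle will be enforcing $K$-sparsity: Berge girth at least $K$ even after adjoining the phantom edge $\{x_1,\dots,x_k\}$. My plan is to exploit the huge number of perfect matchings in $G[S]$ and $G[S\setminus Y]$ guaranteed by the above-threshold degree condition, and to select $N$ and $C^\ast$ by a greedy (or first-moment) procedure that kills every potential short Berge cycle. At each greedy step the number of partial short Berge paths that a candidate edge would close into a cycle is polynomial in $M$ with exponent controlled by $K$, whereas the Dirac-type degree condition continues to supply $\Omega(M^{k-d})$ safe choices at each $d$-set; taking $M$ large enough relative to $K$ therefore lets the construction terminate successfully. If this direct approach turns out to be too crude, one can instead partition $S$ into two disjoint regions used respectively by $N$ and $C^\ast$, so that any Berge cycle mixing edges from both matchings must pass through the small set $Y$ or through a rooted edge, which simplifies the combinatorial bookkeeping dramatically. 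The other steps are routine given the tools developed earlier in the paper.
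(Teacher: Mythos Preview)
Your basic construction --- choose rooted edges $e_i\ni x_i$, pass to a random constant-size set $S\supseteq Y$ inheriting the Dirac condition, and invoke the definition of $\mu_d(k)$ to obtain perfect matchings $N$ on $S$ and $C^\ast$ on $S\setminus Y$ --- is correct and is essentially how the paper reduces \cref{lem:find-subabsorber} to an auxiliary lemma (\cref{lem:find-q-absorber}). The difficulty is entirely in the $K$-sparsity, and here your proposal has a genuine gap.

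The Dirac threshold is purely existential: it guarantees \emph{one} perfect matching, not an abundance, and certainly not the ability to build one greedily with ``$\Omega(M^{k-d})$ safe choices at each $d$-set'' (already for graphs, $d=1$, greedy edge selection from a minimum-degree condition gets stuck long before a perfect matching). More concretely, for $d\ge 2$ even the weakest requirement --- linearity, i.e.\ girth $>2$ --- is out of reach by your method: once $C^\ast$ is fixed, any $d$-set contained in an edge of $C^\ast$ has \emph{every} edge through it forbidden for $N$ (since any such edge meets that $C^\ast$-edge in $\ge d\ge 2$ vertices), so deleting the forbidden edges annihilates the minimum $d$-degree condition. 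Your fallback of partitioning $S$ into regions for $N$ and $C^\ast$ cannot work either, since both matchings must be perfect on essentially all of $S$.

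The key idea you are missing, which the paper supplies in \cref{lem:find-q-absorber}, is to decouple girth from matching existence via an external high-girth \emph{pattern}. One takes a $q$-regular bipartite graph $F$ of girth $\ge K$ with $q\approx\log^2 n$ (\cref{lem:high-girth-graph}), reinterprets it as a $q$-uniform hypergraph $L$ on vertex set $E(F)$ whose two perfect matchings come from the two sides of $F$, and considers a random injection $\phi:V(L)\to V(G)$. Each $L$-edge $e$ then maps to a set $\phi(e)$ of $q$ vertices which, by \cref{lem:random-subset-degrees-2}, a.a.s.\ inherits the Dirac condition, so $G[\phi(e)]$ has \emph{some} perfect $k$-matching --- any one will do. The union of these $k$-matchings over all $e\in E(L)$ is the absorber; its Berge girth is automatically $\ge K$ because edges of the absorber lying in the same cell $\phi(e)$ are disjoint, and edges in different cells $\phi(e),\phi(e')$ can only meet in $\phi(e\cap e')$, so any short Berge cycle in the absorber would project to one in $L$. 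This is precisely the ``partition $S$ into many small pieces and use the Dirac threshold in each piece'' idea, but arranged so that the pieces overlap according to a high-girth template rather than being disjoint.
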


Without the local sparsity condition it would be fairly easy to prove \cref{lem:find-subabsorber}, more or less by using the definition of the Dirac threshold twice. To deal with the sparseness condition we fix a locally sparse ``pattern'' (which is a high-girth bipartite graph), and use the definition of the Dirac threshold plus a random sampling trick to find an absorber ``in line with the pattern''. We defer the details to \cref{subsec:find-sparse-absorber}.

As previously mentioned, for the proof of \cref{lem:find-absorbers} we will use a contraction trick: we ``contract'' $G'$ to obtain a $k$-graph $G'(\mathcal F,\mathcal P)$, in such a way that if we can find contracted absorbers in $G'(\mathcal F,\mathcal P)$ satisfying certain properties, then these correspond to rooted contractible absorbers in $G'$. Before proving \cref{lem:find-absorbers} we define this ``contraction'' operation.

\begin{defn}
\label{def:contract-G}
Consider any $k$-graph $G$, consider a family $\mathcal{F}\subseteq V(G)^{k-1}$ of disjoint $\left(k-1\right)$-tuples, and consider a family $\mathcal{P}$ of disjoint sets $U_{1},\dots,U_{k-1}\subseteq V(G)$, such that the tuples in $\mathcal F$ and the sets in $\mathcal P$ do not share any vertices. Let $v(\mathcal P)$ be the total number of vertices in the sets in $\mathcal P$.

Let $G\left(\mathcal{F},\mathcal{P}\right)$ be the $k$-graph obtained as follows. Start with the $k$-graph $G\left[U_{1}\cup\dots\cup U_{k-1}\right]$, and for each $\boldsymbol v\in \mathcal{F}$ add a new vertex $w_{\boldsymbol v}$. For each tuple $\boldsymbol{v}=\left(v_{1},\dots,v_{k-1}\right)\in\mathcal{F}$, each $1\le j\le k-1$, and each $f\subseteq U_j$ such that $f\cup \{v_j\}$ is an edge of $G$, put an edge $f\cup \{w_{\boldsymbol v}\}$ in $G(\mathcal F,\mathcal P)$.
\end{defn}

One should visualise each $\boldsymbol v=(v_1,\dots,v_{k-1})\in \mathcal F$ as being ``contracted'' to a single vertex $w_{\boldsymbol v}$, and all edges involving $\boldsymbol v$ being deleted except those edges that contain some $v_j$ and have all their other vertices in the corresponding $U_j$. The reason for the edge deletion is to ensure that each edge in $G(\mathcal F,\mathcal P)$ corresponds to exactly one edge in $G$. So, if $G\sim{\operatorname H}^{k}(n,p)$ is a random $k$-graph, then $G(\mathcal F,\mathcal P)$ may be interpreted as a subgraph of a random graph with the same edge probability $p$, and we may apply the sparse embedding lemma to it. In our proof of \cref{lem:find-absorbers} we will choose $\mathcal F$ and $\mathcal P$ depending on our desired roots and the structure of $G'$.

Now we finally prove \cref{lem:find-absorbers}.
\begin{proof}[Proof of \cref{lem:find-absorbers}]
First note that we can assume $p=\Omega(n^{d-k}\log n)=\omega(n^{d-k})$, because otherwise $G$ itself a.a.s.\ has minimum $d$-degree zero and the lemma statement is vacuous. Also, we can assume that $\gamma$ is sufficiently small with respect to $k$ (the lemma statement only gets stronger as we decrease $\gamma$). Now, there are a number of constants in our proof that are defined in terms of each other (constants in the sense that they do not depend on $n$). First, let $\beta=1/(3k)$. Second, let $K$ be large enough to satisfy \cref{lem:contracted-absorber}, applied with $\eta=\gamma/2$. Third, let $M$ be large enough and $\delta>0$ be small enough to satisfy \cref{lem:find-subabsorber}, applied with $\eta=\gamma/4$ and the value of $K$ just defined. Then, $\sigma,\tau>0$ will be small relative to all constants defined so far (small enough to satisfy certain inequalities later in the proof), and we let $\alpha = \sigma /k^3$. Next, $\varepsilon>0$ will be very small, and $t_0$ very large, even compared to $\sigma$. Finally, $\kappa>0$ will be tiny compared to all other constants.

We record some properties that $G$ and each of the $G(\mathcal F,\mathcal P)$ a.a.s.\ satisfy.

\begin{claim*}
$G$ a.a.s.\ satisfies each of the following properties.
\begin{enumerate}
\item[(1)]{For each $\mathcal F,\mathcal P$ as in \cref{def:contract-G}, such that $|\mathcal F|=k\alpha n$, the $k$-graph $G\left(\mathcal{F},\mathcal{P}\right)$ satisfies the conclusion of \cref{thm:KLR} (the sparse embedding lemma), for embedding all graphs $H$ on at most $(k-1)(M+k)$ vertices which have $m_k(H)\le 2/k+\gamma/2$. (The other parameters $\tau,\kappa$ with which we apply the sparse embedding lemma are as described at the beginning of the proof, and the lemma gives us an upper bound on $\varepsilon$ in terms of $\tau$).}
\item[(2)]{Each $G\left(\mathcal{F},\mathcal{P}\right)$ as above is $\left(o\left(1\right),p,1-o\left(1\right)\right)$-upper-uniform.}
\item[(3)]{For any set $W'$ of $2\sigma n$ vertices and every vertex $x$, there are at most $4\left(\sigma n\right)p\binom{n-2}{k-2}$ edges in $G$ containing $x$ and a vertex of $W'$.}

\item[(4)]{For any spanning subgraph $G'\subseteq G$ with minimum $d$-degree
at least $\left(\mu_d(k)+\gamma\right)p\binom{n-d}{k-d}$, and any set $W'$ of at most $2\sigma n$ vertices, let $Y$ be a uniformly random subset of $\beta n$ vertices of $V(G')\setminus W'$. Then a.a.s\ all but $o(n^d)$ of the $d$-sets of vertices in $G'$ have $d$-degree at least $\left(\mu_d+\gamma/2\right)p\binom{\beta n-d}{k-d}$ into $Y$.}
\end{enumerate}
\end{claim*}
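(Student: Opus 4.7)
The plan is to establish each of the four properties as an a.a.s.\ consequence of the concentration lemmas of \cref{sec:concentration} and the sparse embedding lemma (\cref{thm:KLR}), combined via appropriate union bounds over $(\mathcal F,\mathcal P)$ (of which there are $\exp(O(n\log n))$ many). The key observation that drives both (1) and (2) for arbitrary $(\mathcal F,\mathcal P)$ is the following coupling: although $G(\mathcal F,\mathcal P)$ is not literally a random hypergraph on its vertex set $V^*:=V(G(\mathcal F,\mathcal P))$ (its set of allowed edges is restricted by $\mathcal F$ and $\mathcal P$), by \cref{def:contract-G} each allowed edge corresponds to a unique edge of $G$ and therefore appears independently with probability $p$. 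Hence $G(\mathcal F,\mathcal P)$ can be coupled as a subgraph of an auxiliary random hypergraph $G^*\sim{\operatorname H}^k(N^*,p)$ on $N^*:=v(\mathcal P)+|\mathcal F|$ vertices (obtained by filling in the disallowed positions with fresh independent random edges). Since $|\mathcal F|=k\alpha n$, we always have $N^*\ge k\alpha n=\Omega(n)$.

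Properties (2), (3) and (4) are routine. For (2), apply \cref{lem:random-upper-uniform} to $G^*$; upper-uniformity is monotone under subgraphs, so it passes from $G^*$ to $G(\mathcal F,\mathcal P)$, and the failure probability $\exp(-\omega(n\log n))$ easily absorbs the $\exp(O(n\log n))$ choices of $(\mathcal F,\mathcal P)$. Property (3) is just \cref{lem:delete-vertices-degree-ok} with $\lambda=2\sigma$, which is valid under the hypothesis $p\ge Cn^{2-k}$. For (4), I would proceed in two steps: first invoke \cref{lem:delete-vertices-degree-almost-ok} (whose statement already incorporates a union bound over $W'$) to see that a.a.s.\ over $G$, for every $W'$ and every $G'$ as in the statement, all but $o(n^d)$ of the $d$-sets in $V(G')\setminus W'$ retain degree at least $(\mu_d(k)+3\gamma/4)p\binom{n-|W'|-d}{k-d}$ in $G'\setminus W'$. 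Then, for each fixed such $(G,G',W')$, the Janson-type computation from the proof of \cref{lem:degree-concentration-weak} shows that, with high probability over $Y$, $\deg_Y(S)$ is within $\gamma/4$ of its mean for all but $o(n^d)$ of the $d$-sets $S$, delivering the required bound.

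The main content is (1). For each fixed $(\mathcal F,\mathcal P)$ and each of the finitely many linear $k$-graphs $H$ on at most $(k-1)(M+k)$ vertices with $m_k(H)\le 2/k+\gamma/2$, apply \cref{thm:KLR} to $G^*$ with the parameters $\tau,\kappa$ specified at the start of the proof (choosing $\varepsilon$ small enough to work for all such $H$ simultaneously, and absorbing the requirement $p\ge C(N^*)^{-1/m_k(H)}$ into our hypothesis $p\ge n^{-k/2+\gamma}$, since $1/m_k(H)<k/2$). Any subgraph $G'\in\mathcal G(H,n,m,p,\varepsilon)$ of $G(\mathcal F,\mathcal P)$ is automatically a subgraph of $G^*$ with the same regularity structure, so the conclusion of \cref{thm:KLR} for $G^*$ transfers immediately to $G(\mathcal F,\mathcal P)$. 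The failure probability per $(\mathcal F,\mathcal P)$ is $\exp(-\Omega((N^*)^kp))\le\exp(-\Omega(n^kp))$. The main obstacle is closing the final union bound, since there are $\exp(O(n\log n))$ pairs $(\mathcal F,\mathcal P)$; fortunately, the hypothesis $p\ge n^{-k/2+\gamma}$ gives $n^kp\ge n^{k/2+\gamma}$, which dominates $n\log n$ for any $k\ge 2$, and the aggregate failure probability is $o(1)$.
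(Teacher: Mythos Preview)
Your proposal is correct and follows essentially the same route as the paper: the same coupling of $G(\mathcal F,\mathcal P)$ with a binomial random $k$-graph on $\Omega(n)$ vertices, the same union bound over $\exp(O(n\log n))$ choices of $(\mathcal F,\mathcal P)$ using the $\exp(-\Omega(n^kp))$ and $\exp(-\omega(n\log n))$ failure probabilities from \cref{thm:KLR} and \cref{lem:random-upper-uniform}, and the same appeals to \cref{lem:delete-vertices-degree-ok} for (3) and to \cref{lem:delete-vertices-degree-almost-ok} followed by \cref{lem:degree-concentration-weak} for (4). One small slip: your parenthetical ``since $1/m_k(H)<k/2$'' points in the wrong direction---if $1/m_k(H)<k/2$ then $(N^*)^{-1/m_k(H)}>(N^*)^{-k/2}$, making the KLR hypothesis \emph{harder} to satisfy; the relevant fact is rather that $m_k(H)\le 2/k+\gamma/2$ forces $1/m_k(H)\ge k/2-O_k(\gamma)$, which is what lets $p\ge n^{-k/2+\gamma}$ absorb the requirement (the paper's own phrasing here is similarly brief).
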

\begin{proof}
Observe that each $G\left(\mathcal{F},\mathcal{P}\right)$
has at least $|\mathcal F|=\Omega(n)$ vertices and it can be coupled as a subset of the binomial random
$k$-graph on its vertex set (with edge probability $p$). Indeed, for each possible edge $e$ of $G\left(\mathcal{F},\mathcal{P}\right)$, there is a single $k$-set $\phi(e)$ whose presence as an edge in $G$ determines whether $e$ is in $G\left(\mathcal{F},\mathcal{P}\right)$.

For (1) and (2), there are at most $2^{nk}n^{k\cdot k\alpha n}=\exp(O(n\log n))$ ways to choose $\mathcal F$ and $\mathcal P$. Since $p=\omega(n^{1-k}\log n)$, we may apply \cref{thm:KLR} and \cref{lem:random-upper-uniform} and take the union bound over all possibilities for $\mathcal F$ and $\mathcal P$. Note that the requirement on $p$ in \cref{thm:KLR} is that $p$ exceeds $n^{-1/m_k(H)}$ by a large constant, where $m_k(H)\le 2/k+\gamma/2$. This is certainly satisfied since we are assuming $p\ge n^{-k/2+\gamma}$, and that $\gamma$ is small with respect to $k$.

For (3) we simply apply \cref{lem:delete-vertices-degree-ok} with $\lambda=2\sigma$, and for (4) we apply \cref{lem:degree-concentration-weak} to $G'-W$ (recalling our assumption that $p=\omega(n^{d-k})$), after applying \cref{lem:delete-vertices-degree-almost-ok}.
\end{proof}
Consider an outcome of $G$ satisfying all the above properties (for the
rest of the proof, we can forget that $G$ is an instance of a random
graph and just work with these properties). Let $G'\subseteq G$ be a spanning subgraph with minimum $d$-degree at least $\left(\mu_{d}\left(k\right)+\gamma\right)p\binom{n-d}{k-d}$,
consider any set $W$ of $\sigma n$ vertices, and consider vertices $x_{1},\dots,x_{k}$
outside $W$. We will show that $G'-W$ has an absorber
rooted on $x_{1},\dots,x_{k}$ of order at most $(k-1)(M+k)$.

We will accomplish this by studying $G'(\mathcal F,\mathcal P)$ for certain $\mathcal F$ and $\mathcal P$. First, $\mathcal F$ will be defined in terms of the edges incident to $x_1,\dots,x_k$, using the following claim.
\begin{claim*}
For each $i$ let $\Gamma(x_i)$ be the link $\left(k-1\right)$-graph of $x_{i}$ with respect to $G'-W$. Then we can find matchings $M^{i}\subseteq E(\Gamma(x_i))$ of size $\alpha n$, such that no two of these matchings share a vertex.
\end{claim*}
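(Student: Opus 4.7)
My plan is a short greedy argument that builds the matchings $M^1,\dots,M^k$ sequentially, each one edge at a time. Suppose $M^1,\dots,M^{i-1}$ have already been constructed, and we have accumulated a partial matching $M^i_t$ of size $t<\alpha n$ in $\Gamma(x_i)$. Set
\[
U \;=\; W \,\cup\, \{x_1,\dots,x_k\} \,\cup\, V(M^1)\cup\cdots\cup V(M^{i-1}) \,\cup\, V(M^i_t).
\]
Since $\alpha=\sigma/k^3$, we have $|U|\le \sigma n + k + k(k-1)\alpha n \le 2\sigma n$, so property (3) applies to $U$ (after padding to size exactly $2\sigma n$ if needed). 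We must exhibit an edge of $\Gamma(x_i)$ disjoint from $U$; adding such an edge to $M^i_t$ and iterating until $t=\alpha n$ finishes the construction.

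For the greedy step, first use \cref{lem:codegree-degree} to upgrade the minimum $d$-degree hypothesis to $\delta_1(G')\ge (\mu_d(k)+\gamma)p\binom{n-1}{k-1}$, so the link of $x_i$ in $G'$ has at least $(\mu_d(k)+\gamma)p\binom{n-1}{k-1}$ edges. On the other hand, property (3) bounds the number of edges of $G$ through $x_i$ meeting $U$ by $4\sigma n\cdot p\binom{n-2}{k-2}$. Hence the number of edges of $\Gamma(x_i)$ avoiding $U$ (equivalently, edges of $G'$ through $x_i$ disjoint from $U$, noting that $W\subseteq U$) is at least
\[
(\mu_d(k)+\gamma)p\binom{n-1}{k-1} \;-\; 4\sigma n\cdot p\binom{n-2}{k-2}.
\]
Using $\binom{n-1}{k-1}=\tfrac{n-1}{k-1}\binom{n-2}{k-2}$ together with $\mu_d(k)\ge 1/2$, this quantity is strictly positive as long as $\sigma$ is small compared with $1/(k-1)$; since $\sigma$ was chosen small relative to all the constants fixed earlier in the proof, this holds and supplies the desired edge.

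The only thing that could go wrong is the bookkeeping of constants. Specifically, one needs to check that throughout the process the forbidden set $U$ never exceeds the threshold $2\sigma n$ in property (3), and that $\sigma$ was chosen small enough (relative to $k$) at the start of the proof so that the inequality above is strict. Both requirements are automatic from the choices $\alpha=\sigma/k^3$, $|W|=\sigma n$, and the fact that $\sigma$ was declared to be small relative to the other constants already fixed. So there is no genuine obstacle: the claim reduces to greedy extension using the degree condition and property~(3).
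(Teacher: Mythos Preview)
Your proof is correct and uses the same key ingredients as the paper's own argument (the minimum-degree lower bound via \cref{lem:codegree-degree} together with property~(3)). The only difference is organisational: the paper first finds a matching of size $\sigma n/k$ in each $\Gamma(x_i)$ separately via a maximality/contradiction argument and then trims these matchings to size $\alpha n=\sigma n/k^3$ to enforce pairwise disjointness, whereas you handle both concerns in a single greedy pass; both arrangements are equally valid.
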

\begin{proof}
First, ignoring the disjointness condition, note that (3) implies that each $\Gamma(x_i)$ has a matching $M^{i}_0$ of size $\sigma n/k$. Indeed, suppose a maximum matching in $\Gamma(x_i)$ were to have fewer than $\sigma n/k$ edges. Then the set of vertices of this matching would comprise a set $W'$ of fewer than $\sigma n$ vertices such that all the edges of $G'$ which contain $x$ intersect $W'\cup W$. But by the minimum degree condition on $G'$ (and \cref{lem:codegree-degree}) there are at least $(\mu_d(k)+\gamma)p\binom{n-1}{k-1}$ such edges, contradicting (3) for small $\sigma$.

We can then delete some edges from the $M^i_0$ to obtain the desired matchings $M^i$, recalling that $\alpha = \sigma /k^3$.
\end{proof}

Now we can define $\mathcal F$: for each $i$, arbitrarily order the vertices in each edge of $M^{i}$ to obtain
a collection $\mathcal{F}^{i}$ of $\alpha n$ disjoint $\left(k-1\right)$-tuples. Let $\mathcal{F}=\bigcup_{i=1}^{k}\mathcal{F}^{i}$, and let $V\left(\mathcal{F}\right)$ be the set of vertices in the
tuples in $\mathcal{F}$.

Next, we will choose the sets $U_1,\dots,U_{k-1}$ in $\mathcal P$ randomly, so that they satisfy a certain degree condition. This is encapsulated in the following claim.

\begin{claim*}
There are disjoint $\beta n$-vertex sets $U_1,\dots,U_{k-1}\subseteq V(G)\setminus(W\cup V(\mathcal F))$ such that all but $o(n^d)$ $d$-sets of vertices in $G'$ have degree at least $\left(\mu_{d}\left(k\right)+\gamma/3\right)\binom{\beta n-d}{k-d}$ into each $U_j$.
\end{claim*}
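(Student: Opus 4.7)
The plan is to choose $U_1, \ldots, U_{k-1}$ as a uniformly random ordered tuple of disjoint $\beta n$-subsets of $V(G) \setminus (W \cup V(\mathcal{F}))$, and then invoke property~(4) of $G$ together with a $(k-1)$-fold union bound to show that such a choice works with probability $1-o(1)$, so that in particular it exists.

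First I would check the sizes: since $\alpha = \sigma/k^3$, we have $|W \cup V(\mathcal{F})| \leq \sigma n + k(k-1)\alpha n \leq 2\sigma n$, so $W' := W \cup V(\mathcal{F})$ satisfies the size hypothesis of property~(4); moreover $(k-1)\beta n = (k-1)n/(3k) < n/3 \le |V(G) \setminus W'|$, so there is ample room for $k-1$ disjoint $\beta n$-subsets in the complement. The crux is then the observation that in this random model, each $U_j$ is marginally distributed as a uniformly random $\beta n$-subset of $V(G) \setminus W'$, which is precisely the distribution to which property~(4) applies.

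Applying property~(4) with $Y = U_j$ for each fixed $j$, we get that with probability $1-o(1)$ all but $o(n^d)$ of the $d$-sets of $G'$ have degree at least $(\mu_d(k) + \gamma/2)\, p\binom{\beta n - d}{k-d}$ into $U_j$; a union bound over the $k-1 = O(1)$ choices of $j$ then yields, with probability $1-o(1)$, a simultaneous witness, so the required $U_1, \ldots, U_{k-1}$ exist. (I note in passing that the displayed bound in the claim appears to be missing a factor of $p$, which looks like a typographical slip; in any case, the $\gamma/3$ versus $\gamma/2$ gap provides extra slack that we do not need here.)

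I do not anticipate any substantive obstacle in this step: it is a routine application of property~(4), and the only nontrivial input is the marginal-distribution observation that lets us invoke it once per $U_j$ despite the $U_j$'s being forced to be disjoint.
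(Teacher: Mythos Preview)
Your proposal is correct and is exactly the approach the paper takes: the paper's entire proof of this claim is the single sentence ``By (4), almost any choice of $U_{1},\dots,U_{k-1}\subseteq V(G)$ will do,'' and your write-up simply unpacks this, including the size check $|W\cup V(\mathcal F)|\le 2\sigma n$, the marginal-distribution observation, and the union bound over $j$. Your remark about the missing factor of $p$ in the displayed bound is also well taken.
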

\begin{proof}
By (4), almost any choice of $U_{1},\dots,U_{k-1}\subseteq V(G)$ will do.
\end{proof}

Now, in $G'(\mathcal F,\mathcal P)$, let $X^{i}\subseteq V(G'(\mathcal F,\mathcal P))$ be the set of ``newly contracted''
vertices arising from tuples in $\mathcal{F}^{i}$, and let $X=\bigcup_{i=1}^{k}X^{i}$
be the set of all newly contracted vertices. Note that $G'\left(\mathcal{F},\mathcal{P}\right)$
is a subgraph of $G\left(\mathcal{F},\mathcal{P}\right)$. Our goal from now on is to prove the following claim.

\begin{claim*}
There is a set of vertices $y_{1}\in X^{1},\dots,y_{k}\in X^{k}$ such that for each $1\le j\le k-1$ there is an absorber of order at most $M$ in $G'(\mathcal F,\mathcal P)$ rooted at $y_1,\dots ,y_k$, whose other vertices lie entirely in $U_j\setminus(W\cup X)$.
\end{claim*}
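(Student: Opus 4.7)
The plan is to find the desired $k-1$ absorbers simultaneously, as a single \emph{contracted absorber} $H_{\mathrm{cont}}$ in $G'(\mathcal F,\mathcal P)$ rooted at $(y_1,\ldots,y_k)$. First I would build a blueprint for $H_{\mathrm{cont}}$ in a cluster graph obtained by applying sparse regularity to $G'(\mathcal F,\mathcal P)$, and then pull it back to $G'(\mathcal F,\mathcal P)$ via the sparse embedding lemma (property~(1) in the claim above).

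I would start by applying \cref{lem:sparse-regularity} together with \cref{lem:respect-partition} to $G'(\mathcal F,\mathcal P)$ with respect to the partition $(X^1,\ldots,X^k,U_1,\ldots,U_{k-1})$, using $\varepsilon,t_0,\tau$ as fixed at the start of the proof. Denote the resulting partitioned cluster graph by $\mathcal R$, with cluster groups $\mathcal P_{X^i}$ and $\mathcal P_{U_j}$. Using \cref{lem:transfer-cluster-graph-partition} together with \cref{lem:codegree-degree}, property~(4), and standard concentration arguments, I would establish two degree conditions in $\mathcal R$: (a) most $d$-sets of clusters inside each $\mathcal P_{U_j}$ have $d$-degree at least $(\mu_d(k)+\gamma/3)\binom{t_j-d}{k-d}$ into $\mathcal P_{U_j}$, where $t_j:=|\mathcal P_{U_j}|$; and (b) most clusters in each $\mathcal P_{X^i}$ have $1$-degree at least $(\mu_d(k)+\gamma/3)\binom{t_j-1}{k-1}$ into each $\mathcal P_{U_j}$. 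Condition~(b) on contracted vertices traces back to the $1$-degree of each $v_j$ in $G'$ (from \cref{lem:codegree-degree}) combined with Chernoff-type concentration for the random subsets $U_j$. A union bound then lets me choose root clusters $\mathcal Y_i\in\mathcal P_{X^i}$ ($1\le i\le k$) that simultaneously satisfy~(b) for every $j\in\{1,\ldots,k-1\}$.

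Next, for each such $j$ I would apply \cref{lem:find-subabsorber} to the induced cluster subgraph on $\{\mathcal Y_1,\ldots,\mathcal Y_k\}\cup\mathcal P_{U_j}$ (whose degree structure satisfies the hypotheses of that lemma by construction), obtaining a $K$-sparse absorber $A_j$ of order at most $M$ rooted at $(\mathcal Y_1,\ldots,\mathcal Y_k)$ with non-root clusters in $\mathcal P_{U_j}$. Since $U_1,\ldots,U_{k-1}$ are pairwise disjoint, the $A_j$'s share no clusters outside the roots, so $H_{\mathrm{cont}}:=A_1\cup\cdots\cup A_{k-1}$ is a contracted absorber in $\mathcal R$ on at most $k+(k-1)M\le(k-1)(M+k)$ vertices. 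Any Berge cycle of length less than $K$ in $H_{\mathrm{cont}}$ that visits multiple $A_j$'s would require a Berge path of length less than $K-1$ between two roots inside some $A_j$, contradicting $K$-sparseness; combined with the individual girth of each $A_j$, this forces $\mathrm{girth}(H_{\mathrm{cont}})\ge K$, and \cref{lem:contracted-absorber} then yields $m_k(H_{\mathrm{cont}})\le 2/k+\gamma/2$.

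Finally, property~(1) applies to $H_{\mathrm{cont}}$ (few enough vertices, small enough $k$-density) and guarantees at least $\zeta p^{e(H_{\mathrm{cont}})}(n/t)^{v(H_{\mathrm{cont}})}\gg 1$ canonical copies of $H_{\mathrm{cont}}$ in $G'(\mathcal F,\mathcal P)$ respecting the cluster assignment. Picking any such copy produces root vertices $y_i\in\mathcal Y_i\subseteq X^i$ together with the required $k-1$ absorbers, whose non-root vertices lie in $\mathcal P_{U_j}\subseteq U_j\setminus(W\cup X)$ as desired (since each cluster in $\mathcal P_{U_j}$ is contained in $U_j$, which is disjoint from both $W$ and the contracted-vertex set $X$ by construction). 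The main obstacle is ensuring that $m_k(H_{\mathrm{cont}})\le 2/k+\gamma/2$ combined with $p\ge n^{-k/2+\gamma}$ suffices to trigger the sparse embedding lemma --- this is exactly why the girth parameter $K$ is taken large (via \cref{lem:contracted-absorber}) and why the absorbers of \cref{lem:find-subabsorber} are constructed to be high-girth. A secondary subtlety is the degree transfer for contracted vertices, whose natural degree information is $1$-degree in $G'$ rather than $d$-degree; \cref{lem:codegree-degree} together with direct concentration handles this.
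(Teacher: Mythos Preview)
Your proposal is correct and follows essentially the same approach as the paper: apply sparse regularity to $G'(\mathcal F,\mathcal P)$ respecting the partition, transfer the degree conditions to the cluster graph via \cref{lem:transfer-cluster-graph-partition}, pick root clusters $V^i\in\mathcal X^i$ with good $1$-degree into every $\mathcal U_j$, invoke \cref{lem:find-subabsorber} inside each $\mathcal R[\mathcal U_j\cup\{V^1,\dots,V^k\}]$ to get $K$-sparse sub-absorbers, assemble them into a contracted absorber of girth at least $K$ (hence $k$-density at most $2/k+\gamma/2$ by \cref{lem:contracted-absorber}), and pull back via property~(1). Your explicit girth argument for $H_{\mathrm{cont}}$ and your remark on why $1$-degree (rather than $d$-degree) is the relevant quantity for the contracted vertices are details the paper leaves implicit, but the overall structure is the same.
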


Recalling the definition of $\mathcal F$ and $G(\mathcal F,\mathcal P)$, the absorbers in the above claim then form sub-absorbers for some contractible absorber in $G'$ of order at most $(k-1)M$, rooted at $x_1,\dots,x_k$. So, it suffices to prove the above claim to complete the proof of \cref{lem:find-absorbers}. Crucially, in our new goal, we have quite a lot of freedom to choose the roots $y_1,\dots,y_k$. This will allow us to use the sparse embedding lemma (that is, property (1)).

\begin{proof}[Proof of claim]
Recalling property (2), we apply our sparse regularity lemma (\cref{lem:sparse-regularity}) to
$G'\left(\mathcal{F},\mathcal{P}\right)$, with small $\varepsilon$
and large $t_{0}$. Apply \cref{lem:respect-partition} and \cref{lem:transfer-cluster-graph-partition}
with small threshold $\tau$, to obtain a $t$-vertex cluster
graph $\mathcal{R}$, with $t_{0}\le t$. Let $\mathcal{U}_{j}$ be the set of clusters (vertices of $\mathcal{R}$) contained
in each $U_{j}\setminus(W\cup X)$, let $\mathcal{X}^{i}$ be the set of clusters contained
in each $X^{i}$, and let $\mathcal W$ be the set of clusters contained in $W$.

Since $\sigma,\tau,\varepsilon$ are very small, and $t_0$ is very large, \cref{lem:respect-partition} and \cref{lem:transfer-cluster-graph-partition} ensure that for each $i$, $\mathcal X^i$ has about $\alpha t$ clusters, and almost all of those clusters have degree at least $\left(\mu_{d}\left(k\right)+\gamma/4\right)\binom{\beta t-1}{k-1}$ into each $\mathcal U_j$. Fix such a cluster $V^i$, for each $i$.

Now, $\mathcal R[\mathcal U_j\cup \{V^1,\dots,V^{k}\}]$ satisfies the assumptions for \cref{lem:find-subabsorber}, so it contains a $K$-sparse absorber in $\mathcal R$ of order at most $M$ rooted at $V^1,\dots, V^k$ whose other vertices (clusters) lie in $\mathcal U_j$. These absorbers form the sub-absorbers for a contracted absorber in $\mathcal R$, which has girth at least $K$ and therefore has $k$-density at most $2/k+\gamma/2$ by \cref{lem:contracted-absorber}. Now, by property (1), a canonical copy of the same contracted absorber exists in $G'(\mathcal F,\mathcal P)$ (in fact, there are many such copies). The sub-absorbers of this contracted absorber then satisfy the requirements of the claim.
\end{proof}

This completes the proof of \cref{lem:find-absorbers}.
\end{proof}

\subsection{Locally sparse absorbers have low $k$-density}
\label{subsec:locally-sparse}
In this section we prove \cref{lem:contracted-absorber}.
\begin{proof}[Proof of \cref{lem:contracted-absorber}]
Note that a cycle of length two corresponds to a pair of edges that intersect in more than one vertex (so a hypergraph has girth greater than two if and only if it is linear). The \emph{line graph} $L\left(G\right)$ of a linear $k$-graph $G$
has the edges of $G$ as vertices, with an edge $e_{1}e_{2}$ when
$e_{1}$ and $e_{2}$ are incident in $G$. Note that $v(L(G))=e(G)$ and $v(G)\ge ke(G)-e(L(G))$. Observe that a linear $k$-graph $G$ is acyclic if and only if its line graph is a forest, in which case $e(L(G))\le v(L(G))-1=e(G)-1$, so $v(G)\ge (k-1)e(G)+1$.

Now consider any subgraph $H'\subseteq H$
with $v\left(H'\right)>k$. First, if $H'$ is acyclic, then
\[
\frac{e\left(H'\right)-1}{v\left(H'\right)-k}\le\frac{e\left(H'\right)-1}{\left(k-1\right)e\left(H'\right)+1-k}=\frac{1}{k-1}\le2/k,
\]
by the above discussion. Otherwise, $H'$ has a cycle, which must have length at least $K$, so $v(H')\ge (k-1)K$. Note that every vertex of $H$ has degree at most 2, except $k$ vertices $x_{1},\dots,x_{k}$
which have degree $k-1$. So, $ke(H')$ can be bounded by $k(k-1)+2(v(H')-k)$, and
\[
\frac{e\left(H'\right)-1}{v\left(H'\right)-k}\le\frac{\left(k\left(k-1\right)+2(v(H')-k)\right)/k}{v(H')-k}\le\frac{k(k+1)/(K(k-1)-k)+2}{k}\le \frac{2}{k}+\eta
\]
for large $K$.
\end{proof}

\subsection{Locally sparse absorbers in dense hypergraphs}\label{subsec:find-sparse-absorber}

In this subsection we prove \cref{lem:find-subabsorber}. If we were to ignore the local sparseness condition it would be quite simple to find an absorber rooted on our desired vertices: recalling that an absorber essentially consists of two perfect matchings on some vertex set, we could simply apply the definition of the Dirac threshold twice, in appropriate subgraphs of $G$.

In order to deal with the sparseness condition, we fix a high-girth hypergraph $L$ (with edges of size about $\log n$), which will form a ``pattern'' for our absorber. We consider a random injection from $L$ into the vertex set of our graph $G$, so that (with positive probability) each of the edges of $L$ corresponds to a subgraph of $G$ with minimum $d$-degree exceeding its Dirac threshold, and therefore has a perfect matching. We will define $L$ in such a way that the union of these perfect matchings gives us an absorber with the desired properties.

It is convenient to deduce \cref{lem:find-subabsorber} from a slightly simpler lemma where no vertices of exceptionally low degree are allowed. To state this lemma we define a slight generalisation of the notion of an absorber.

\begin{defn}
An $r$\emph{-absorber} rooted at a $rk$-set of vertices
$y_{1},\dots,y_{rk}$ is a hypergraph which can be partitioned into
two matchings, one of which covers the entire vertex set and the other
of which covers every vertex except $y_{1},\dots,y_{rk}$. We say
an $r$-absorber is $K$-sparse if it has girth at least $K$, even
after adding an extra edge $\left\{ y_{1},\dots,y_{rk}\right\} $.
The point of this definition is that if we have roots $x_{1},\dots,x_{k}$
and we pick arbitrary disjoint edges $e_{1},\dots,e_{k}$ containing
the roots (whose other vertices are $y_{1},\dots,y_{\left(k-1\right)k}$,
say), then a $K$-sparse $\left(k-1\right)$-absorber rooted at $y_{1},\dots,y_{\left(k-1\right)k}$ (not containing the vertices $x_1,\dots,x_k$)
gives us a $K$-sparse absorber rooted at $x_{1},\dots,x_{k}$.
\end{defn}

Now, the key lemma is as follows.
\begin{lem}
\label{lem:find-q-absorber}
For any $\gamma>0$ and $r,k,K\in\mathbb{N}$ the following holds for sufficiently large $n$. For any $k$-graph
$G$ on $n$ vertices with $\delta_{d}\left(G\right)\ge\left(\mu_{d}\left(k\right)+\gamma\right)\binom{n-d}{k-d}$,
there is a $K$-sparse $r$-absorber rooted on every $rk$-tuple of
vertices.
\end{lem}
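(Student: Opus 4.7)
The plan is to build the $K$-sparse $r$-absorber in two stages: first, construct an abstract $K$-sparse $r$-absorber template $L$ of constant size (depending only on $r,k,K$) on abstract roots; second, embed $L$ into $G$ so that the abstract roots map to the prescribed $y_1,\dots,y_{rk}$.

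For the template, I would start from an auxiliary bipartite $2$-graph $B$ with left part $X = X_{\mathrm{rt}} \sqcup X_{\mathrm{du}}$, right part $Y$, and $|X_{\mathrm{rt}}| = rk$, where each vertex of $X_{\mathrm{rt}}$ has $B$-degree $k-1$ and each vertex of $X_{\mathrm{du}} \cup Y$ has $B$-degree $k$. I would also require $B$ to have girth at least $K$, and any two vertices of $X_{\mathrm{rt}}$ to lie at distance at least $K-2$ in $B$. For $|X_{\mathrm{du}}|,|Y|$ sufficiently large constants, such $B$ exists by a standard probabilistic-deletion argument: sample a random near-biregular bipartite graph, delete a bounded-in-expectation number of edges to kill short cycles and short root-to-root paths, and patch the degree sequence by local adjustments. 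The template $L$ is then the ``$k$-uniform blow-up'' of $B$: introduce one internal vertex per $B$-edge; for each left vertex $x$ of $B$, form a $k$-edge of the covering matching $M_1$ consisting of the internal vertices at $x$ (together with the associated root $y_i$ if $x \in X_{\mathrm{rt}}$); for each right vertex $y$, form a $k$-edge of the non-covering matching $M_2$ consisting of the internal vertices at $y$. A direct verification shows that $L$ is a valid $r$-absorber, and that Berge cycles in $L$---including those using the hypothetical edge $\{y_1,\dots,y_{rk}\}$---correspond either to $B$-cycles or to $B$-paths between distinct $X_{\mathrm{rt}}$-vertices, so the conditions on $B$ guarantee Berge girth at least $K$, i.e.\ that $L$ is $K$-sparse.

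To embed $L$ into $G$, I would process the edges of $L$ in an order induced by BFS on $B$ starting from $X_{\mathrm{rt}}$. In the tree portion of the BFS (the initial layers, acyclic by the girth condition on $B$), each new $L$-edge has at most one already-placed vertex, and by the minimum $d$-degree hypothesis combined with \cref{lem:codegree-degree} the number of hyperedges of $G$ extending any given placed vertex is $\Omega(n^{k-1})$; a fresh completion avoiding the bounded number of already-used vertices can then be chosen greedily. For the constantly many ``back-edges'' (those $L$-edges arising from the bounded number of cycles in $B$, whose vertices are all determined by earlier greedy choices), I would randomize the earlier greedy selections among the $\Omega(n^{k-1})$ valid options; a standard averaging argument then shows that each back-edge lies in $E(G)$ with probability bounded away from zero, so a union bound over the constant number of back-edges yields positive success probability.

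The main obstacle is the combinatorial construction of $B$: simultaneously enforcing the prescribed degree sequence, high girth, and large pairwise distance between the $rk$ designated roots requires careful probabilistic bookkeeping. The embedding step is more routine, amounting to a density/counting argument that exploits the extreme denseness of $G$ relative to the constant size of $L$.
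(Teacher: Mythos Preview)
Your template construction is reasonable and in fact has the same flavour as the paper's, but the embedding step has a genuine gap at the back-edges. First, the ``averaging'' claim is not justified: when you randomise the greedy choices, each $\phi(w_j)$ is distributed over $V(G)$ with weights proportional to the pair-degrees $\deg_G(\{v_j,\cdot\})$ (where $v_j$ is the anchor through which $w_j$ was reached). The hypothesis only controls $d$-degrees, and for $d=1$ these pair-degrees can vanish, so there is no reason the resulting $k$-tuple should be an edge with probability bounded away from zero. Second, even granting that each back-edge individually succeeds with probability $\ge c$, a ``union bound over the constant number of back-edges'' does not give positive probability for their conjunction; you would need each \emph{failure} probability to be $o(1)$, or some form of independence between the random choices determining different back-edges, and you establish neither. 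Since (for $k\ge 3$) the degree constraints on $B$ force it to contain cycles, back-edges are unavoidable and this gap is fatal to the approach as written.

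The paper's proof takes a genuinely different route that sidesteps rooted embedding entirely. Rather than a constant-size template, it uses a template $L$ whose hyperedges have size $q\asymp\log^2 n$, built (as you do) from a $q$-regular bipartite graph of girth $\ge K$. It then takes a uniformly random injection of the non-root template vertices into $V(G)$, so that each template hyperedge $e$ maps to an (essentially) uniformly random $q$-set $\phi(e)$. By concentration (\cref{lem:random-subset-degrees-2}), $G[\phi(e)]$ inherits the minimum $d$-degree condition with probability $1-o(n^{-k})$, and hence has a perfect matching \emph{by the definition of $\mu_d(k)$}. The union of these matchings is the desired $r$-absorber. The key point is that the argument never asks whether any specific $k$-set is an edge of $G$; it only uses the Dirac threshold non-constructively, on random induced subgraphs of logarithmic size. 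This is exactly the device that replaces your problematic back-edge step.
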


As outlined at the beginning of this subsection, to enforce our local sparsity condition we will apply the Dirac threshold to subgraphs arising from a high-girth ``pattern''. This pattern will be constructed from the following bipartite graph.

\begin{lem}
\label{lem:high-girth-graph}
Fix $k$ and $K$. For
sufficiently large $n$, there is a $q$-regular bipartite graph $F$
with girth at least $K$ and at most $n$ edges, for some $q\ge\log^{2}n$
which is divisible by $k$.
\end{lem}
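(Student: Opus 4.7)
The plan is to set $q$ appropriately and then construct $F$ as the bipartite double cover of a small, high-girth regular graph whose existence is classical.

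First, I would take $q$ to be the smallest multiple of $k$ satisfying $q \ge \log^{2} n$; since $k$ is fixed this gives $q = \Theta(\log^{2} n)$. Next, I would invoke the classical theorem of Erd\H{o}s and Sachs asserting that for every integers $q, K \ge 3$ there exists a $q$-regular graph $G_0$ of girth at least $K$ on at most $c_K q^{K-1}$ vertices, where $c_K$ depends only on $K$. (Alternatively, this can be established by a probabilistic alteration argument using the configuration model, combined with an edge-switching step to remove short cycles without disturbing exact $q$-regularity.)

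Next, I would let $F$ be the bipartite double cover of $G_0$: set $V(F) = V(G_0) \times \{0,1\}$ and, for each edge $\{u,v\} \in E(G_0)$, include both $\{(u,0),(v,1)\}$ and $\{(u,1),(v,0)\}$ in $E(F)$. By construction $F$ is bipartite (with parts $V(G_0) \times \{0\}$ and $V(G_0) \times \{1\}$) and $q$-regular. Moreover, any cycle in $F$ projects (by forgetting the second coordinate) to a closed walk of the same length in $G_0$, and a standard argument shows that such a projection forces a cycle in $G_0$ of length at most the cycle length in $F$: if the projection is itself a cycle we are done, and otherwise a repeated vertex in the projection yields (since the two preimages in $F$ must differ in their second coordinate) an odd-length closed subwalk in $G_0$, which in turn contains a shorter odd cycle. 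Thus the girth of $F$ is at least $K$. Finally, $|E(F)| = 2|E(G_0)| = q\cdot|V(G_0)| \le c_K q^K = O(\log^{2K} n)$, which is at most $n$ for all sufficiently large $n$.

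The main obstacle is establishing the existence of high-girth $q$-regular graphs on polynomially many vertices in $q$; this is a classical but technically nontrivial result. The cleanest route is to cite Erd\H{o}s--Sachs directly; a self-contained probabilistic proof requires an edge-switching argument to preserve exact $q$-regularity while eliminating short cycles, and this switching step is the key technical subtlety, since naively deleting one edge per short cycle would break the regular degree sequence.
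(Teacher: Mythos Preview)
Your proof is correct but takes a different route from the paper. The paper starts with the explicit Lazebnik--Ustimenko algebraic construction, which is already a $p$-regular bipartite graph of girth at least $K+5$ on $2p^{K}$ vertices for a prime $p$ chosen near $n^{1/(K+1)}$; it then strips off at most $k-1$ perfect matchings (using that nonempty regular bipartite graphs always have perfect matchings) to make the regularity divisible by $k$, noting that $q\ge p-(k-1)\ge\log^{2}n$. You instead fix $q$ in advance as the least multiple of $k$ above $\log^{2}n$, invoke Erd\H{o}s--Sachs to get a $q$-regular graph of girth at least $K$ on $O(q^{K-1})$ vertices, and pass to the bipartite double cover.

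Both approaches are short and legitimate. The paper's route is more concrete (it names an explicit family and avoids the double-cover girth argument), and it achieves $q$ polynomial in $n$ rather than polylogarithmic, which is irrelevant here but explains the different edge count. Your route is arguably more classical: Erd\H{o}s--Sachs is proved by an elementary greedy construction, and the double cover handles bipartiteness without any further work. Your girth argument for the double cover is fine: a cycle of length $\ell$ in $F$ projects to a closed walk of length $\ell$ in $G_{0}$, and if this walk is not already a simple cycle then a repeated vertex (whose two preimages differ in the second coordinate) yields an odd closed subwalk of length less than $\ell$, which contains an odd cycle; either way $G_{0}$ has a cycle of length at most $\ell$. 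The edge count $|E(F)|=q|V(G_0)|=O(q^{K})=O(\log^{2K}n)\le n$ is clearly fine for large $n$.
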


\begin{proof}There are many ways to prove this. For example, fix
a prime $p$ such that $n/2^{K+1}\le p^{K+1}\le n$ (which exists by Bertrand's
postulate), and consider the bipartite graph defined by Lazebnik and
Ustimenko in \cite{LU95}, which is $p$-regular, has $2p^{K}$ vertices
and has girth at least $K+5$. Then repeatedly delete perfect matchings
at most $k-1$ times until we arrive at a $q$-regular bipartite graph with $q$ divisible by $k$. (Note
that nonempty regular bipartite graphs always have perfect matchings). For large $n$, we have $q\ge p-\left(k-1\right)\ge\log^{2}n$.
\end{proof}

Another ingredient we will need is the following fact, which actually
already appeared in the proof of \cref{lem:random-subset-degrees}.

\begin{lem}
\label{lem:random-subset-degrees-2}
There is $c=c(k)>0$ (depending on $k$) such that the following holds. Consider an $n$-vertex
$k$-graph $G$ and consider a set $A$ of $d$ vertices with degree
at least $\left(\mu+\gamma\right)\binom{n-d}{k-d}$. Let $S$ be a
random subset of $Q\ge 2d$ vertices of $G$. Then with probability at least
$1-e^{-c\gamma^{2}Q}$, $A$ has degree at least
$\left(\mu+\gamma/2\right)\binom{Q-d}{k-d}$ into $S$.
\end{lem}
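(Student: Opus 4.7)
The plan is to extract the core estimate already carried out inside the proof of \cref{lem:random-subset-degrees}, stripped of its conditioning step. I would parametrize $S=\{v_1,\dots,v_Q\}$ as the first $Q$ entries of a uniformly random ordering of $V(G)$, let $L_A$ denote the link $(k-d)$-graph of $A$ (so that $|L_A|\ge(\mu+\gamma)\binom{n-d}{k-d}$ by hypothesis), and note that $\deg_S(A)=|\{X\in L_A:X\subseteq S\}|$, which depends only on $S\setminus A$.

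First I would compute the expectation via linearity and the identity $\Pr[X\subseteq S]=\binom{Q}{k-d}/\binom{n}{k-d}$, valid for any fixed $(k-d)$-set $X$ disjoint from $A$, obtaining
\[
\E[\deg_S(A)]\ge(\mu+\gamma)\binom{n-d}{k-d}\binom{Q}{k-d}\big/\binom{n}{k-d}.
\]
To replace the right-hand side by $(\mu+\gamma)\binom{Q-d}{k-d}$, I would check the clean binomial-coefficient inequality $\binom{n-d}{k-d}\binom{Q}{k-d}\ge\binom{n}{k-d}\binom{Q-d}{k-d}$; after writing both sides as products of $d$ linear factors, this reduces termwise to $(n-k+d-i)(Q-i)\ge(Q-k+d-i)(n-i)$ for each $0\le i\le d-1$, and expanding the difference gives $(k-d)(n-Q)\ge 0$.

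For concentration, I would apply the same bounded-difference inequality for functions of a random ordering that was used in the proof of \cref{lem:random-subset-degrees}, namely \cite[Corollary~2.2]{GIKM17}. Swapping any element of $\{v_1,\dots,v_Q\}$ with an element outside alters $\deg_S(A)$ by at most $2\binom{Q-1}{k-d-1}$, since such a swap can destroy at most $\binom{Q-1}{k-d-1}$ edges of $L_A$ from the count and create at most $\binom{Q-1}{k-d-1}$ new ones. Taking deviation parameter $t=(\gamma/2)\binom{Q-d}{k-d}$ and using that $\binom{Q-d}{k-d}/\binom{Q-1}{k-d-1}$ is of order $Q$ (with implicit constant depending on $k$), the resulting tail bound collapses to
\[
\Pr\bigl[\deg_S(A)\le(\mu+\gamma/2)\binom{Q-d}{k-d}\bigr]\le 2\exp(-c\gamma^2 Q)
\]
for a suitable $c=c(k)>0$, which is the claimed inequality.

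I don't foresee a genuine obstacle: both the expectation estimate and the concentration step are essentially already present in the proof of \cref{lem:random-subset-degrees}, so the only additional work is to decouple those ingredients from the earlier proof's conditioning setup and to verify the binomial-coefficient inequality separating $\binom{n-d}{k-d}\binom{Q}{k-d}/\binom{n}{k-d}$ from $\binom{Q-d}{k-d}$.
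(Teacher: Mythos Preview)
Your proposal is correct and takes essentially the same approach as the paper, which does not give a separate proof but simply points to the computation already carried out inside the proof of \cref{lem:random-subset-degrees}. The one genuine addition in your write-up is the binomial-coefficient inequality $\binom{n-d}{k-d}\binom{Q}{k-d}\ge\binom{n}{k-d}\binom{Q-d}{k-d}$: in the proof of \cref{lem:random-subset-degrees} the $d$-set under consideration is $\{v_1,\dots,v_d\}\subseteq S$, so one works with a random $(Q-d)$-subset of $V(G)\setminus A$ and the expectation bound $\E\deg_S(A)\ge(\mu+\gamma)\binom{Q-d}{k-d}$ is immediate; in \cref{lem:random-subset-degrees-2} the set $A$ need not lie in $S$, and your inequality is exactly what is needed to recover the same expectation bound in that setting. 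This is a small but legitimate detail that the paper elides when it says the fact ``already appeared'' earlier.
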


Now we are ready to prove \cref{lem:find-q-absorber}.

\begin{proof}[Proof of \cref{lem:find-q-absorber}] Consider
a graph $F$ as in \cref{lem:high-girth-graph}, and for a vertex
$v$, let $F\left(v\right)$ be the set of edges incident to $v$.
Then define a $q$-uniform hypergraph $L$ whose vertices are the
edges of $F$ and whose edges are the sets $F\left(v\right)$. Note
that the girth condition on $F$ transfers to $L$: the girth
of $L$ is at least $K$. Also, note that the two vertex parts of
$F$ correspond to two perfect matchings $M_{1}$ and $M_{2}$ partitioning the edges of $L$. Let $L'$ be obtained by deleting $rk$ vertices from one of
the edges of $M_{2}$, so that $L'$ has $rk$ vertices $z_{1},\dots,z_{rk}$
which have degree 1 in $L'$. Let $L''$ be the non-uniform hypergraph
obtained by deleting each $z_{i}$ from the edge it is contained in
(so that $L''$ has $rk$ edges with size only $q-1$, in addition to the edge of $L'$ with size only $q-rk$).

Now, consider an $rk$-tuple of vertices $\left(y_{1},\dots,y_{rk}\right)$ in $G$,
and consider a uniformly random injection $\phi:V\left(L''\right)\to V\left(G\right)\backslash\left\{ y_{1},\dots,y_{rk}\right\} $. Extend $\phi$ to
a map $V\left(L'\right)\to V\left(G\right)$ by taking $\phi\left(z_{i}\right)=y_{i}$
for each $i$.

Then, for each edge $e\in E\left(L'\right)$, note that $\phi\left(e\right)$
is ``almost'' a uniformly random subset of $q$ vertices of $G$.
To be precise, one can couple $\phi\left(e\right)$ with a uniformly random subset $S$ of $q=\Omega\left(\log^{2}n\right)$ vertices of $G$, in such a way that the size of the symmetric difference $|S\triangle \phi\left(e\right)|$ is at most $1+2rk$. By \cref{lem:random-subset-degrees-2} and
the union bound, with probability $1-o(n^{-k})$ every $d$-set of
vertices $U$ satisfies
\[
\deg_{S}\left(U\right)\ge\left(\mu_{d}\left(k\right)+\gamma/2\right)\binom{q-d}{k-d},
\]
implying that $\deg_{\phi\left(e\right)}\left(U\right)\ge\left(\mu_{d}\left(k\right)+\gamma/3\right)\binom{q-d}{k-d}$. By the union bound, a.a.s.\ this holds for each $e\in E\left(L'\right)$,
so fix such an outcome of $\phi$. Then for each $e\in E\left(L'\right)$, $G\left[\phi\left(e\right)\right]$
has minimum $d$-degree at least $\left(\mu_{d}\left(k\right)+\gamma/3\right)\binom{q-d}{k-d}$,
so has a perfect matching. The union of these perfect matchings gives
a $K$-sparse $r$-absorber rooted at $y_{1},\dots,y_{rk}$.\end{proof}

Now, we deduce \cref{lem:find-subabsorber} from \cref{lem:find-q-absorber}.

\begin{proof}[Proof of \cref{lem:find-subabsorber}]
Consider $x_{1},\dots,x_{k}$ as in
the theorem statement, and consider a random subset $U$ of $R$ vertices
of $G-\left\{ x_{1},\dots,x_{k}\right\} $, for some large $R$ to
be determined. Then by \cref{lem:random-subset-degrees-2} (with $d=1$) and \cref{lem:random-subset-degrees}, with probability at
least $1-\binom{R}{d}\left(\delta+e^{-\Omega\left(R\right)}\right)-ke^{-\Omega\left(R\right)}$
each $x_{i}$ has at least $\left(\eta/2\right)R$ neighbours in
$U$, and $G\left[U\right]$ has minimum $d$-degree at least $\left(\mu_{d}\left(k\right)+\eta/2\right)\binom{R-d}{k-d}$.
This probability is greater than zero for large $R$ and small $\delta>0$,
so we may fix such a choice of $U$.

For each $i$, choose an edge containing $x_{i}$ and $k$ vertices
in $U$, in such a way that these chosen edges form a matching $M$
(we can do this greedily). Let $y_{1},\dots,y_{\left(k-1\right)k}$
be the vertices in $V\left(M\right)\cap U$, and apply \cref{lem:find-q-absorber} to find
a $K$-sparse $\left(k-1\right)$-absorber $H$ rooted at $y_{1},\dots,y_{\left(k-1\right)k}$.
Then $M\cup H$ is a $K$-sparse absorber of order at most $R$ rooted
at $x_{1},\dots,x_{k}$.
\end{proof}

\section{Concluding remarks}
\label{sec:concluding}
We have proved that if $p\ge \max\{n^{-k/2+\gamma},C n^{-k+2}\}$, for any $\gamma>0$ and sufficiently large $C$, then the random $k$-graph $G\sim{\operatorname H}^{k}\left(n,p\right)$ typically obeys a relative version of any Dirac-type theorem for perfect matchings in hypergraphs. There are a number of compelling further directions of research.

It is natural to try to improve our assumption on $p$, with the eventual goal of removing it entirely (as in \cref{conj:resilience}). First, as mentioned in the introduction, we observe that the assumption $p\ge C n^{-k+2}$ can actually be weakened substantially (though this only affects the case $(d,k)=(1,3)$). The reason for this assumption was to ensure that all vertices have linear degree, so that an absorbing structure of linear size could be built greedily. The reason we needed an absorbing structure of linear size was that \cref{lem:almost-perfect} does not have effective bounds: it guarantees an almost-perfect matching covering all but $o(n)$ vertices, but since the regularity lemma is notorious for its extremely weak quantitative aspects, this $o(n)$ term is actually only very slightly sub-linear. However, it is possible to use a bootstrapping trick due to Nenadov and \v Skori\'c~\cite{NS20} to get a much stronger bound in the setting of \cref{lem:almost-perfect}, which allows us to make do with a much smaller absorbing structure. Using these ideas, it seems to be possible to take $p$ to be as small as about $n^{-4/3}$, in the case $(d,k)=(1,3)$. Actually, there is some hope of being able to remove the extra assumption $p\ge C n^{-k+2}$ altogether, by using the Aharoni--Haxell matching criterion (\cref{thm:hyper-match}) to build an absorbing structure, instead of building it greedily. We have not considered this in detail.

On the other hand, the assumption that $p$ is somewhat larger than $n^{-k/2}$ seems to be much more crucial. An absorber has at least $k/2$ times more edges than unrooted vertices, so absorbers of constant size simply will not exist for smaller $p$. We imagine that completely new ideas will be required to bypass this barrier.

Another interesting direction would be to consider spanning subgraphs other than perfect matchings. For example, a \emph{loose} cycle is a cyclically ordered collection of edges, such that only consecutive edges intersect, and then only in a single vertex. A \emph{tight} cycle is a cyclically ordered collection of vertices, such that every $k$ consecutive vertices form an edge. There is also a spectrum of different notions of cycles between these two extremes, and Dirac-type problems have been studied for Hamiltonian cycles of all these different types. We believe that it should be possible to adapt the methods in this paper to prove an analogue of \cref{thm:resilience} for loose Hamiltonian cycles, which are linear (no two edges intersect in more than one vertex) and behave in a very similar way to perfect matchings. It may also be possible to adapt our methods to study other types of Hamiltonian cycles, but this would probably require using different machinery from \cref{thm:KLR} (which only works for linear hypergraphs).

Finally, it may also be interesting to consider Dirac-type theorems relative to \emph{pseudorandom} hypergraphs, which are not random but satisfy some characteristic properties of random hypergraphs. Certain extremal problems relative to pseudorandom hypergraphs have been studied by Conlon, Fox and Zhao~\cite{CFZ15} in connection with the Green--Tao theorem on arithmetic progressions in the prime numbers, and the existence of perfect matchings in pseudorandom hypergraphs has been studied by H{\`a}n, Han and Morris~\cite{HHM20}. It seems plausible that the methods in this paper can be adapted to work for hypergraphs satisfying some notion of pseudorandomness, but we have not explored this further.

\medskip

\textbf{Acknowledgements.} We are grateful
to the referee for their extremely careful reading of the paper, and a large number of useful comments and suggestions.

\bibliographystyle{amsplain_initials_nobysame_nomr}

\bibliography{ref}

\end{document}